\documentclass[a4paper, 11pt, onesided]{preprint}
\pdfoutput=1 %

\usepackage{tikz}
\usetikzlibrary{arrows, cd, backgrounds}

\usepackage[cachedir=biri-minted, frozencache=true]{minted}
\usemintedstyle{xcode}
\colorlet{CodeBackground}{black!5!white} %
\setminted{encoding=utf8, mathescape, labelposition=topline, frame=lines,
  fontsize=\footnotesize, xleftmargin=2em, xrightmargin=2em}
\setminted[macaulay2]{label={\TT{Macaulay2}}}
\setminted[mathematica]{label={\TT{Mathematica}}}

\usepackage{multibib}
\newcites{soft}{Mathematical software}

\usepackage{caption}
\usepackage{subcaption}
\subcaptionsetup[figure]{labelfont=rm}
\usepackage{wrapfig}

\usepackage[en-US]{datetime2}
\usepackage{halloweenmath}
\DTMsavenow{now}
\newcommand{\hallow}[1]{%
  \ifthenelse{\equal{\DTMfetchmonth{now}}{10} \AND \(\equal{\DTMfetchday{now}}{31} \OR \equal{\DTMfetchday{now}}{30}\)}{%
    {\ensuremath{#1}}%
  }{}%
}

\makeatletter
\newenvironment{sidebyside}[1][\linewidth]{%
  \xdef\@save@parindent{\the\parindent}%
  \begin{minipage}{#1}%
}{%
  \end{minipage}%
}
\newenvironment{sidebox}[2][c]{%
  \begin{minipage}[#1]{#2}%
  \parindent=\@save@parindent\relax%
}{%
  \end{minipage}%
}
\makeatother

\NewDocumentCommand{\Arrow}{O{}o}{%
  \mathbin{%
    \tikz [baseline=-0.25ex, -latex]
      \draw [#1] (0pt,0.5ex) -- (1.3em,0.5ex)
        \IfNoValueF{#2}{node [midway, above, inner sep=0pt, outer sep=0pt] () {\scriptsize#2}}
      ;
  }%
}
\renewcommand{\to}{\mathrel{\Arrow[->]}}
\newcommand{\xto}[1]{\mathrel{\Arrow[->][#1]}}

\renewcommand{\into}{\mathrel{\Arrow[right hook->]}}

\newcommand{\iso}{\mathrel{\Arrow[<->]}}
\newcommand{\ratto}{\mathrel{\Arrow[->, densely dashed]}}

\let\vec\relax
\DeclareMathOperator{\vec}{vec}

\newcommand*\revcolon{%
  \nobreak
  \mskip6mu plus1mu
  \mathpunct{}%
  \nonscript
  \mkern-\thinmuskip
  {:}%
  \mskip2mu
  \relax
}

\usepackage{relsize}

\NewDocumentCommand{\ideal}{e{_}m}{\IfValueT{#1}{#1}\langle#2\rangle}
\NewDocumentCommand{\cone}{e{_}m}{\IfValueT{#1}{#1}[#2]}
\newcommand{\SOS}[1]{\mathlarger{\Sigma}{#1}^2}
\NewDocumentCommand{\monoid}{m}{\mathlarger{\Pi}(#1)}

\DeclareMathOperator{\Sper}{Sper}

\newcommand{\PD}{\RM{PD}}
\newcommand{\Sym}{\RM{Sym}}
\newcommand{\Stab}{\RM{Stab}}

\DeclareMathOperator{\Var}{Var}
\DeclareMathOperator{\Cov}{Cov}

\DeclareMathOperator{\pos}{pos}

\DeclareMathOperator{\num}{num}

\newcommand\<[1]{\ck{#1}}

\DeclareMathOperator{\pa}{pa}

\title[Real birational implicitization for statistical models]{%
Real birational implicitization\\%
for statistical models}

\author{Tobias Boege \and Liam Solus}

\address[T.B.~\hallow{\pumpkin}]{Department of Mathematics and Statistics, UiT The Arctic University, %
Tromsø, Norway}
\email{post@taboege.de}

\address[L.S.~\hallow{\mathbat}]{Department of Mathematics, KTH Royal Institute of Technology, Stockholm, Sweden}
\email{solus@kth.se}

\subjclass[2020]{%
  62R01 %
  (primary)
  14E05, %
  14P10, %
  13P25 %
  (secondary)%
}
\keywords{%
  birational map,
  implicitization,
  statistical model,
  identifiability,
  Markov property,
  vanishing ideal,
  graphical model%
}

\date{\today}

\begin{document}

\begin{abstract}
We derive an implicit description of the image of a semialgebraic set under a birational map, provided that the denominators of the map are positive on the set.
For statistical models which are globally rationally identifiable, this yields model-defining constraints which facilitate model membership testing, representation learning, and model equivalence tests.
Many examples illustrate the applicability of our results. The implicit equations recover well-known Markov properties of classical graphical models, as well as other well-studied equations such as the Verma constraint. They also provide Markov properties for generalizations of these frameworks, such as colored or interventional graphical models, staged trees, and the recently introduced Lyapunov models.
Under a further mild assumption, we show that our implicit equations generate the vanishing ideal of the model up to a saturation, generalizing previous results of Geiger, Meek and Sturmfels, Duarte and Görgen, Sullivant, and others.
\end{abstract}

\maketitle
\vspace{-\baselineskip}

\section{Introduction}
\label{sec: intro}

Statistical models are often defined on a relatively simple and intuitive semialgebraic parameter space $\Theta \subseteq \BB R^n$. A rational parametrization map $\alpha \colon \Theta \to \BB T \cong \BB R^m$ then takes each parameter vector $\theta$ to an alternative parametrization $\alpha(\theta) = t \in \BB T$ which is more amenable to statistical analysis; for example, $\BB T$ is often a space of sufficient statistics. %
We thus identify the statistical model with the image $\CC M = \alpha(\Theta) \subseteq \BB T$.
Subsequent statistical learning tasks, including hypothesis testing for model membership, model equivalence testing and representation learning, require an implicit description of~$\CC M$ via a collection of model-defining constraints; see, for example, \cite{koller2009probabilistic, pearl2009causality, murphy2012machine}.
Since $\CC M$ is the image of a semialgebraic set under a rational map, the Tarski--Seidenberg theorem guarantees that it is a semialgebraic set itself.  It is therefore described implicitly by polynomial equations and inequalities, providing the desired set of constraints.

The task of finding the polynomial constraints defining $\CC M$ from $\alpha$ and an implicit description of~$\Theta$ is more generally known as the \emph{implicitization problem} in algebraic geometry.
Classically, one assumes that the Zariski closure~of~$\Theta$ (and hence that of~$\CC M$) is irreducible and uses algorithms from elimination theory to derive generators of the prime ideal of polynomials vanishing on $\CC M$; see~\cite[Chapter~3]{CoxLittleOShea}. Polynomial inequalities, on the other hand, have only been worked out in special cases, for instance \cite{DrtonYu, ZwiernikSmith, AllmanRhodesTaylor, MontufarSeigal}.

In this paper we solve the implicitization problem under the additional assumption that $\CC M$ satisfies a version of global rational identifiability, namely that $\alpha$ viewed as a rational map $\BB R^n \ratto \BB T$ admits a rational inverse~$\beta\colon \BB T \ratto \BB R^n$.
We call these models \emph{ambirational}.
Geometrically, this means that $\Theta$ and $\CC M$ are birationally equivalent \emph{through} a birational isomorphism of their ambient spaces. %
Under this assumption and if $g$ vanishes on $\Theta$, then its pullback $\beta^*(g)$ is a vanishing rational function on~$\CC M$. It follows that its numerator vanishes and birationality guarantees that every vanishing polynomial on~$\CC M$ arises in this way. In other words, the vanishing ideal of~$\CC M$ is generated by the numerators of the generators of the vanishing ideal of $\Theta$ under $\beta^*$, up to saturation at the denominators of~$\beta^*$.
We can derive a similar characterization of the polynomial inequalities on $\CC M$ in terms of $\Theta$ and $\beta^*$ under the additional assumption --- which is natural in the statistical context --- that the denominators of $\beta^*$ are strictly positive on~$\CC M$.
Our~main result \Cref{thm:Biri} makes this idea rigorous and its supporting lemmas work in great generality.

The polynomial constraints defining $\Theta$ lead directly to constraints defining $\CC M$ via the numerators of~$\beta$. Due to a general desire for parameter interpretability in statistics, $\Theta$ is typically simple (oftentimes just a polyhedral cone), and thus $\CC M$ is ``simple by proxy''. We use this idea in \Cref{sec: stats} to generalize the notion of a Markov property in graphical modeling to all ambirational models. We observe that the resulting Markov property is amenable to recently-developed hypothesis tests for model membership \cite{sturma2024testing}, while also providing an algorithm for checking model equivalence. While the former has immediate applications in real data problems, the latter provides a useful tool in the study of open problems regarding representation learning for generalizations of graphical models.
We highlight that previous results regarding the saturation of ideals generated by Markov properties for Gaussian Bayesian networks \cite{ColoredDAG, Sullivant}, discrete Bayesian networks \cite{geiger2006toric} and staged trees \cite{duarte2020equations, duarte2020algebraic} are special cases of a general property of ambirational models (\Cref{thm:VanishingIdeal}), thereby unifying these results in a simple manner.

As ambirational models are ubiquitous in statistics, the results in this paper are generally applicable. In \Cref{sec:Ambirational}, we highlight popular examples including linear concentration models, Gaussian $\RM{MTP}_2$ models, generalizations of graphical and causal models, families of latent variable models and staged trees. In cases of well-studied models, the ambirational Markov property (see \Cref{subsec: markov properties}) typically recovers well-known graphical Markov properties. In cases where there is no known Markov property, \Cref{thm:Biri} gives a simple way to obtain one. This is exemplified in \Cref{ex:Lyapunov}, where we derive a Markov property for an instance of the recently introduced Lyapunov models \cite{IdentLyapunov}. We also observe how additional constraints can be easily added to ambirational models, yielding Markov properties for use in specific applications where these additional constraints are fitting to the available expert knowledge.

\section{Preliminaries}
\label{sec: prelims}

Familiarity with basic commutative and real algebra is assumed. Standard sources include \cite{AtiyahMacdonald,Kemper} and \cite{RealAlgebra,LamReal}. For an introduction to algebraic statistics \cite{Sullivant} is recommended. We recall some basic definitions and results in this section while setting~up~notation.

A ring is always commutative and has a multiplicative identity~$1$. If $A$ and $B$ are subsets of a ring, we use notation such as $A+B \defas \Set{a+b : a \in A, b \in B}$ or $A^2 \defas \Set{a^2 : a \in A}$ to extend operations to sets. Thus an ideal in a ring $A$ is a subset $\SR I \subseteq A$ with $\SR I + \SR I \subseteq \SR I$ and $A \cdot \SR I \subseteq \SR I$. For~any subset $B \subseteq A$ the smallest ideal of $A$ containing $B$ is denoted by $\ideal_A{B}$. It consists of all finite sums of the form $\sum_{i=1}^k a_i b_i$ with $a_i \in A$ and $b_i \in B$.

\subsection{Real algebra}

Let $A$ be a ring. A \emph{cone} in $A$ is a subset $\SR P \subseteq A$ with $\SR P + \SR P \subseteq \SR P$, $\SR P \cdot \SR P \subseteq \SR P$ and $A^2 \subseteq \SR P$. %
Just as ideals capture algebraic properties of ``vanishing'', cones are made to capture properties of ``non-negativity''. The smallest cone in any ring $A$ is the \emph{sums of squares cone} $\SOS{A}$ which consists of all finite sums $\sum_{i=1}^k a_i^2$ with $a_i \in A$.
Let $\SR P$ be any cone and $B \subseteq A$. The cone \emph{generated} by $B$ over $\SR P$ is denoted by $\cone_{\SR P}{B}$; it is the smallest cone containing $\SR P$ and $B$ and consists of all finite sums of the form $\sum_{i=1}^k p_i b_i$ where $p_i \in \SR P$ and each $b_i$ is a product of finitely many (distinct) elements of~$B$. The smallest cone containing $B$ is thus $\cone_{\SOS{A}}{B}$.

A cone $\SR P$ is \emph{proper} if $-1 \not\in \SR P$. If $\SR P$ is proper and $\SR P \cup -\SR P = A$, then $\SR P$ is also referred to as an \emph{ordering} because it establishes a total ordering $a \le b \iff b-a \in \SR P$ on $A$ which is compatible with addition and multiplication. In this case the \emph{support} $|\SR P| \defas \SR P \cap -\SR P$ is an ideal. Thinking of $\SR P$ as a set of non-negative elements, $|\SR P|$ is the set of elements considered zero and we use $\SR P^+ \defas \SR P \setminus -\SR P$ to denote the elements considered strictly positive.

An ideal $\SR I$ is \emph{real} if $\sum_{i=1}^k a_i^2 \in \SR I$ implies that each $a_i \in \SR I$. The real ideals in $\BB R[x_1, \dots, x_n]$ are precisely the vanishing ideals of subsets of $\BB R^n$. A cone $\SR P$ is \emph{prime} if $ab \in \SR P$ only if $a \in \SR P$ or $-b \in \SR P$. Prime cones are orderings. Moreover, the support of a prime cone $\SR P$ is a real prime ideal. Thus $\SR P$ even induces an ordering on the field of fractions of the quotient ring $A / |\SR P|$. The set of prime cones forms the \emph{real spectrum} $\Sper A$; more details can be found in \cite[Chapters~4~and~7]{RealAlgebra}.

\subsection{Localization and saturation}
\label{sec: localization}

A \emph{multiplicative set} $S$ in a ring $A$ is a submonoid of the multiplicative monoid~$A$ which does not contain~$0$. In particular, it contains~$1$ and is closed under multiplication. We shall also refer to a multiplicative set more concisely as a \emph{monoid} in~$A$. The monoid generated by a subset $B \subseteq A$ is denoted by $\monoid{B}$ and consists of all finite products of the form $\prod_{i=1}^k b_i^{d_i}$ with $b_i \in B$ and $d_i \in \BB N$.

Given a monoid $S \subseteq A$, the \emph{localization} $S^{-1} A \defas \Set{ \sfrac{a}{s} : a \in A, s \in S }$ is a ring with the usual equality, addition and multiplication of fractions; see~\cite[Chapter~3]{AtiyahMacdonald} for details. If~$S$ is free of zero divisors of~$A$, then the canonical homomorphism $A \to S^{-1} A$, $a \mapsto \sfrac{a}{1}$ is an embedding. In~this case, subsets of $A$ are often identified with their images under this embedding.
While our techniques can be generalized to this setting, below we will always make the simplifying assumption that $A$ is an integral domain. In applications, $A$ is often just a polynomial ring~over~a~field.

For a fraction $\sfrac{a}{s} \in S^{-1} A$, the numerator $\num(\sfrac{a}{s}) \defas a S$ is determined only up to multiplication with~$S$. Frequently, the chosen representative of the coset $aS$ will not matter and, by abuse of notation, we treat $\num(\sfrac{a}{s})$ as an element of~$A$.

An ideal or a cone in $A$ generates an ideal or cone, respectively, in $S^{-1} A$. This object in $S^{-1} A$ is known as the \emph{extension} of the object in~$A$. Conversely, given an ideal or cone in $S^{-1} A$, the intersection with $A$ is an ideal or cone, respectively, in~$A$, which is referred to as the \emph{contraction}. The contraction of the extension is usually larger than the starting object. It is described by saturation: for any $B \subseteq A$, the \emph{saturation of $B$ at $S$} is $B:S \defas \Set{a \in A : \exists s \in S : sa \in B}$. Saturation at a fixed monoid $S$ is an order-theoretic \emph{closure} operation on subsets of $A$, i.e., it is
\begin{description}[itemsep=-0.3em]
\item[extensive] $B \subseteq B:S$,
\item[monotone] $B \subseteq C$ implies $B:S \subseteq C:S$, and
\item[idempotent] $(B:S):S = B:S$.
\end{description}
The~set~$B$ is \emph{saturated at $S$} if $B:S = B$, i.e., $sa \in B$ with $s \in S$ necessitates $a \in B$.
The~following statements on the localization of ideals are easy and well-known. Analogous results hold for cones and we include their (entirely parallel) proofs~below.

\begin{lemma} \label{lemma:SatIdeal}
Let $A$ be an integral domain, $S \subseteq A$ a monoid and $\SR I \subseteq A$ an ideal.
\begin{enumerate}[label=(\arabic*)]
\item The extension of $\SR I$ in $S^{-1} A$ equals $S^{-1} \SR I \defas \Set{ \sfrac{f}{s} : f \in \SR I, s \in S }$. This ideal is proper if and only if $\SR I \cap S = \emptyset$.

\item The contraction $A \cap S^{-1} \SR I$ coincides with the saturation $\SR I:S$. It is proper if and only if $\SR I \cap S = \emptyset$.

\item If $\SR I$ is prime and $\SR I \cap S = \emptyset$, then $\SR I$ is saturated at $S$ and $S^{-1} \SR I$ is prime as well.

\item If $\SR I$ is real, then $S^{-1} \SR I$ is also real.
\end{enumerate}
\end{lemma}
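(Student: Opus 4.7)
My plan is to prove the four parts in the order stated. Parts (1), (2), (3) are routine: I would unwind the definitions of extension, contraction, saturation and primality, using the integral-domain hypothesis at one step to cancel nonzero elements of $S$ from equations of fractions. Part (4) requires a small extra twist to convert a sum-of-squares relation in $S^{-1}A$ into one in $A$ to which realness of $\SR I$ can be applied.

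For (1) I would first verify that $S^{-1}\SR I$ is closed under addition and under multiplication by arbitrary elements of $S^{-1}A$, hence is itself an ideal containing the image of $\SR I$; conversely, any extension must contain $\sfrac{f}{s} = \sfrac{1}{s}\cdot\sfrac{f}{1}$ for every $f \in \SR I$, $s \in S$. For properness, $\sfrac{1}{1} \in S^{-1}\SR I$ means $\sfrac{f}{s} = \sfrac{1}{1}$ for some such $f,s$, which in a domain forces $f = s$, so $\SR I \cap S \neq \emptyset$; the converse is obvious. For (2), $a \in A \cap S^{-1}\SR I$ exactly when $\sfrac{a}{1} = \sfrac{f}{s}$ for some $f \in \SR I$ and $s \in S$, which in a domain is equivalent to $tsa \in \SR I$ for some $t \in S$, i.e.\ to $a \in \SR I:S$. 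Properness follows from (1).

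For (3), $\SR I \cap S = \emptyset$ combined with primality immediately lets me cancel $S$-factors: from $sa \in \SR I$ with $s \in S$, $s \notin \SR I$ forces $a \in \SR I$, so $\SR I$ is saturated at $S$. For primality of $S^{-1}\SR I$, from $\sfrac{a}{s}\cdot\sfrac{b}{t} \in S^{-1}\SR I$ I would pass to $A$ via (2) to obtain $u \in S$ with $u a b \in \SR I$; primality of $\SR I$ and $u \notin \SR I$ then yield $ab \in \SR I$, hence $a$ or $b$ lies in $\SR I$, and correspondingly one of the two fractions lies in $S^{-1}\SR I$.

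For (4), suppose $\sum_{i=1}^k \xi_i^2 \in S^{-1}\SR I$ with $\xi_i = \sfrac{a_i}{s_i}$. Clearing the common denominator $D = \prod_j s_j$ rewrites the sum as $\sfrac{N}{D^2}$ with $N = \sum_i \bigl(a_i \prod_{j\neq i} s_j\bigr)^2 \in A$, and (2) produces some $t \in S$ with $tN \in \SR I$. The key trick will be to multiply once more by $t$: then $t^2 N = \sum_i \bigl(t a_i \prod_{j\neq i} s_j\bigr)^2 \in \SR I$ is a bona fide sum of squares in $A$, so realness of $\SR I$ forces each $t a_i \prod_{j\neq i} s_j \in \SR I$. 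Rewriting $\xi_i = \sfrac{t a_i \prod_{j\neq i} s_j}{t s_i \prod_{j\neq i} s_j}$ exhibits $\xi_i \in S^{-1}\SR I$, proving realness of the extension. The only obstacle I anticipate is exactly this multiplication by the extra $t$ in (4): without it, $tN$ is only a sum of squares ``up to'' the nuisance factor $t$, and realness cannot be applied directly; the domain hypothesis is what makes the upgrade to $t^2 N$ work cleanly.
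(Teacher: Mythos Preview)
Your proof is correct in all four parts. The paper does not actually give a proof of this lemma: it introduces it with ``The following statements on the localization of ideals are easy and well-known'' and only supplies the parallel argument for cones in \Cref{lemma:SatCone}. Your treatment of (1)--(3) is the standard one and matches what the paper's cone proof would specialize to; in particular, your direct primality argument in (3) is the ideal-theoretic analogue of the paper's homomorphism-based argument for prime cones. Part (4) has no cone analogue in the paper, and your handling of it --- clearing denominators to get $N$, then multiplying by an extra $t$ so that $t^2N$ is an honest sum of squares in $A$ to which realness applies --- is exactly the right trick and is cleanly executed.
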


We now turn to the effects of localization on cones. Again it turns out that saturations describe the contraction of the extension into a localized ring. These results are elementary but we prove them here for lack of a suitable reference.

\begin{lemma} \label{lemma:SatCone}
Let $A$ be an integral domain, $S \subseteq A$ a monoid and $\SR P \subseteq A$ a cone.
\begin{enumerate}[label=(\arabic*)]
\item \label{lemma:SatCone:1}
The extension $\cone_{\SOS{(S^{-1} A)}}{\SR P}$ of $\SR P$ in $S^{-1} A$ equals $S^{-2} \SR P \defas \Set{ \sfrac{p}{s^2} : p \in \SR P, s \in S }$. This cone is proper if and only if $S^2 \subseteq \SR P^+$.

\item \label{lemma:SatCone:2}
If $S \subseteq \SR P$, then $S^{-2} \SR P = S^{-1} \SR P$ and the contraction $A \cap S^{-1} \SR P$ coincides with the saturation $\SR P:S$. It is proper if and only if $S \subseteq \SR P^+$.

\item \label{lemma:SatCone:3}
If $\SR P$ is prime and $S \subseteq \SR P^+$, then $\SR P$ is saturated at $S$ and $S^{-2} \SR P$ is prime as well.
\end{enumerate}
\end{lemma}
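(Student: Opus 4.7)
The plan is to prove each part by first giving an explicit description of the cone in question and then reading off the asserted properties. Throughout, the integral domain hypothesis is essential because it ensures that equality $\sfrac{a}{s} = \sfrac{b}{t}$ in $S^{-1}A$ simplifies to $ta = sb$ in $A$, avoiding annoying torsion factors.

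For part \ref{lemma:SatCone:1} I would verify both inclusions between $S^{-2}\SR P$ and the generated cone. One direction is essentially syntactic: every $\sfrac{p}{s^2}$ equals $(\sfrac{1}{s})^2 \cdot p$, so it lies in any cone in $S^{-1}A$ that contains $\SR P$. For the other direction I would check that $S^{-2}\SR P$ is itself a cone: closure under addition uses the common denominator $\sfrac{p}{s^2} + \sfrac{q}{t^2} = \sfrac{(t^2 p + s^2 q)}{(st)^2}$; closure under multiplication is immediate; every square $(\sfrac{a}{s})^2 = \sfrac{a^2}{s^2}$ belongs to $S^{-2}\SR P$ since $a^2 \in \SR P$; and $\SR P \subseteq S^{-2}\SR P$ via $p = \sfrac{p}{1}$. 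The properness criterion then reduces to a one-line computation: $-1 = \sfrac{p}{s^2}$ in $S^{-1}A$ forces $-s^2 = p$, i.e.\ $s^2 \in -\SR P$, so $-1 \in S^{-2}\SR P$ iff some $s^2$ with $s \in S$ fails to lie in $\SR P^+$.

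For part \ref{lemma:SatCone:2}, under $S \subseteq \SR P$ the identity $S^{-1}\SR P = S^{-2}\SR P$ is the trick $\sfrac{p}{s} = \sfrac{ps}{s^2}$ together with $ps \in \SR P \cdot \SR P \subseteq \SR P$. The contraction is computed by the same domain manipulation: $a = \sfrac{p}{s}$ in $S^{-1}A$ is equivalent to $sa = p$ in $A$, which exhibits $a$ as an element of $\SR P : S$; conversely, $sa \in \SR P$ gives $a = \sfrac{sa}{s} \in S^{-1}\SR P$. The properness refinement mirrors part~\ref{lemma:SatCone:1}: $-1 \in S^{-1}\SR P$ iff $-s \in \SR P$ for some $s \in S$, iff $S \not\subseteq \SR P^+$.

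Part \ref{lemma:SatCone:3} is where the prime hypothesis enters and is the most delicate step, though no single calculation is hard. For saturatedness, if $sa \in \SR P$ I would factor $sa = a \cdot s$ and apply the defining implication of a prime cone to obtain $a \in \SR P$ or $-s \in \SR P$; the assumption $s \in \SR P^+$ rules out the latter. For primality of $S^{-2}\SR P$ (which equals $S^{-1}\SR P$ by part~\ref{lemma:SatCone:2}), I would take $(\sfrac{a}{s})(\sfrac{b}{t}) \in S^{-1}\SR P$, write this as $\sfrac{p}{u}$ with $p \in \SR P$, $u \in S$, and clear denominators in the domain to obtain $u\, ab = pst \in \SR P$. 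Factoring as $(ua) \cdot b$ and invoking primality yields $ua \in \SR P$ or $-b \in \SR P$. In the first case the just-proved saturation gives $a \in \SR P$, hence $\sfrac{a}{s} \in S^{-1}\SR P$; in the second case $-\sfrac{b}{t} \in S^{-1}\SR P$ directly. The only real obstacle is bookkeeping the various multiplicative clearing factors correctly; the domain assumption makes this clean.
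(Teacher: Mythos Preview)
Your argument is correct. Parts~\ref{lemma:SatCone:1} and~\ref{lemma:SatCone:2}, as well as the saturatedness claim in~\ref{lemma:SatCone:3}, match the paper's proof essentially line for line (the paper phrases~\ref{lemma:SatCone:1} as ``take a generic element of the generated cone and clear denominators'' rather than ``verify the cone axioms for $S^{-2}\SR P$'', but these are the same computation viewed from opposite ends).

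The genuine difference is in the primality of $S^{-2}\SR P$ in part~\ref{lemma:SatCone:3}. You give a direct element-chasing argument: clear denominators, apply the prime-cone implication to the factorization $(ua)\cdot b$, and use the freshly proved saturatedness to strip the $u$. The paper instead invokes the characterization of prime cones as positivity sets $\pos\phi$ of homomorphisms $\phi\colon A\to K$ into an ordered field (\cite[Proposition~4.3.4]{RealAlgebra}); since $S\subseteq\SR P^+$ forces $\phi(S)$ to consist of positive units, $\phi$ extends to $\ol\phi\colon S^{-1}A\to K$, and $\pos\ol\phi = S^{-1}\SR P$ is then automatically prime. Your route is more elementary and self-contained, avoiding the external reference; the paper's route is shorter and connects the statement to the real-spectrum picture used elsewhere in the paper. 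Either is fine here.
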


\pagebreak %

\begin{proof}
\begin{paraenum}
\item[\ref{lemma:SatCone:1}]
Let $\SR P' = \cone_{\SOS{(S^{-1} A)}}{\SR P}$. The inclusion $S^{-2} \SR P \subseteq \SR P'$ is obvious. In the opposite direction, consider any element $a' = \sum_{i=1}^k \sigma_i p_i \in \SR P'$ where $\sigma_i \in \SOS{(S^{-1} A)}$ and $p_i \in \SR P$. Since $\sigma_i$ is a sum of squares of elements in $S^{-1} A$, there exist $s_i \in S$ which clear the denominators so that $s_i^2 \sigma_i \in \SOS{A}$. But this shows $\left(\prod_{i=1}^k s_i\right)^2 a' \in \cone_{\SOS{A}}{\SR P} = \SR P$ and thus $a' \in S^{-2} \SR P$. The cone is not proper if and only if $-1$ can be represented as a fraction $\sfrac{p}{s^2}$ with $p \in \SR P$ and $s \in S$. But this means $s^2 \in -\SR P$ and hence the cone is proper if and only if $S^2 \cap -\SR P = \emptyset$. Since $S^2 \subseteq \SR P$ always holds, this is equivalent to $S^2 \subseteq \SR P^+$.

\item[\ref{lemma:SatCone:2}]
If $S \subseteq \SR P$, then any $\sfrac{p}{s} \in S^{-1} \SR P$ equals $\sfrac{sp}{s^2} \in S^{-2} \SR P$; the other inclusion is obvious since $S$ is closed under multiplication.
To show that $\SR P:S$ is the contraction of $S^{-1} \SR P$, let $a = \sfrac{p}{s} \in A \cap S^{-1} \SR P$. Then $sa = p \in \SR P$ and this already shows $a \in \SR P:S$. The reverse inclusion follows by reading this argument in reverse. The saturation is proper if and only if $S \cap -\SR P = \emptyset$. Under the assumption that $S \subseteq \SR P$, this is equivalent to $S \subseteq \SR P^+$.

\item[\ref{lemma:SatCone:3}]
To see that $\SR P$ is saturated at $S$, take any $a \in \SR P:S$. There is $s \in S$ such that $sa \in \SR P$. Since $\SR P$ is a prime cone, this means $-s \in \SR P$ or $a \in \SR P$. Since $S \subseteq \SR P^+$ by assumption, $a \in \SR P$ follows~as~required.

The cone $\SR P$ is prime if and only if $\SR P = \pos \phi \defas \Set{ a \in A : \phi(a) \ge 0 }$ for some homomorphism $\phi \colon A \to K$ into an ordered field $K$, by~\cite[Proposition~4.3.4]{RealAlgebra}.
The assumption $S \subseteq \SR P^+$ means that $\phi(s) > 0$ for all $s \in S$. Since $K$ is an ordered field, $\phi(S)$ consists entirely of units and hence $\phi$ extends to a homomorphism $\ol{\phi} \colon S^{-1} A \to K$ via $\ol{\phi}(\sfrac{a}{s}) = \sfrac{\phi(a)}{\phi(s)}$. It is clear that $\ol{\phi}(\sfrac{a}{s}) \ge 0$ if and only if $a \in \SR P$, so $\pos \ol{\phi} = S^{-1} \SR P = S^{-2} \SR P$ is a prime cone.
\qedhere
\end{paraenum}
\end{proof}

Monoids play two distinct roles in our setting. First, their elements appear as denominators of rational maps and in order to write down their pullbacks, we have to localize. Second, the non-vanishing of polynomials is a valid and useful type of constraint on a statistical model to exclude degenerate loci or to turn weak inequalities into strict ones. This is modeled by monoids as well. To distinguish these two roles, we reserve capital letters like $S$ in the standard font for monoids defining localizations and denote monoids corresponding to non-vanishing model constraints by $\SR U$, using the same font as for ideals and cones.

A monoid $\SR U \subseteq A$ remains a monoid in $S^{-1} A$ but from the point of view of non-vanishing, it lacks the newly created units $\sfrac1s$. Therefore, the extension of $\SR U$ in $S^{-1} A$ is the monoid $S^{-1} \SR U = \Set{ \sfrac{u}{s} : u \in \SR U, s \in S }$. We use $S^\pm$ as a short-hand for $S^{-1} S$. We say that $\SR U$ is \emph{prime} if $ab \in \SR U$ implies $a, b \in \SR U$ for all $a, b \in A$. This is also sometimes called \emph{saturatedness} (without the reference to another monoid~$S$). The proof of the following simple facts is left to the reader.

\begin{lemma} \label{lemma:SatMonoid}
Let $A$ be an integral domain and $S, \SR U \subseteq A$ monoids. The contraction $A \cap S^{-1} \SR U$ coincides with the saturation $\SR U : S$. If $\SR U$ is prime, then $\SR U$ is saturated at $S$. If additionally $S \subseteq \SR U$, then $S^{-1} \SR U$ is prime as~well.
\end{lemma}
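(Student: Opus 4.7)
The plan is to imitate the structure of the analogous proofs for ideals and cones (in Lemmas \ref{lemma:SatIdeal} and \ref{lemma:SatCone}), since monoids are even simpler in that there is no additive structure to keep track of: only multiplicativity and, for part~(3), the defining implication of a prime monoid.

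For the first claim, I would show $A \cap S^{-1}\SR U = \SR U : S$ by two inclusions. Given $a \in A \cap S^{-1}\SR U$, write $a = \sfrac{u}{s}$ with $u \in \SR U$, $s \in S$. Clearing denominators in the integral domain $A$ yields $sa = u \in \SR U$, i.e., $a \in \SR U : S$. Conversely, if $a \in A$ satisfies $sa \in \SR U$ for some $s \in S$, then $a = \sfrac{sa}{s}$ exhibits $a$ as an element of $S^{-1}\SR U$. Both directions are entirely parallel to \Cref{lemma:SatIdeal}(2), just without the extra hypothesis that $\SR I \cap S = \emptyset$; here the monoid structure guarantees $0 \notin \SR U$ automatically.

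For the second claim, assume $\SR U$ is prime and take any $a \in \SR U : S$. Then $sa \in \SR U$ for some $s \in S$, and primality immediately gives $a \in \SR U$ (and also $s \in \SR U$, but this is not needed). Combined with extensivity of saturation, this proves $\SR U : S = \SR U$.

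For the third claim, assume further that $S \subseteq \SR U$ and let $\sfrac{a}{s}, \sfrac{b}{t} \in S^{-1}A$ with product in $S^{-1}\SR U$. Then $\sfrac{ab}{st} = \sfrac{u}{r}$ for some $u \in \SR U$ and $r \in S$, which in the integral domain $A$ means $rab = stu$. The right-hand side lies in $\SR U$ because $S \subseteq \SR U$ and $\SR U$ is multiplicatively closed, so $rab \in \SR U$. Applying primality of $\SR U$ repeatedly gives $a, b \in \SR U$, so that $\sfrac{a}{s}, \sfrac{b}{t} \in S^{-1}\SR U$, as desired. I don't anticipate any real obstacle; the only mild subtlety is remembering that equality of fractions in the localization of a domain translates to an equation in $A$ after clearing denominators, which is what lets primality of $\SR U$ in $A$ be transported to primality of $S^{-1}\SR U$ in $S^{-1}A$.
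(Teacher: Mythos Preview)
Your proposal is correct and follows exactly the approach the paper intends: the paper leaves this proof to the reader, explicitly framing it as the monoid analogue of \Cref{lemma:SatIdeal} and \Cref{lemma:SatCone}, and your argument carries out precisely that analogy. The three parts are handled cleanly, and the only place requiring any care---clearing denominators in part~(3) and then invoking primality of $\SR U$ in $A$ on the resulting product $rab$---is treated correctly.
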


\subsection{The Positivstellensatz}
\label{sec: Positivstellen}

Let $f_i, p_j \in \BB R[x_1, \dots, x_n]$, for $i \in [r]$ and $j \in [s]$, be polynomials.
They define a \emph{basic closed} semialgebraic set $\Set{ x \in \BB R^n : f_i(x) = 0, \; p_j(x) \ge 0 }$. It is effectively given by the ideal $\SR I$ generated by the $f_i$ and the cone $\SR P$ generated by the $p_j$. These polynomials also define a \emph{basic open} semialgebraic set $\Set{ x \in \BB R^n : f_i(x) = 0, \; p_j(x) > 0 }$ where the weak inequalities are made strict. This set is formally given by $\SR I$ and $\SR P$ as before together with the monoid $\SR U$ generated by the $p_j$.

Unless the ideal $\SR I$ is radical, it does not contain all polynomials which vanish on the complex variety cut out by the polynomial equations $f_i = 0$. Over the real numbers, which are not algebraically closed, $\SR I$ may be even further from the vanishing ideal. Similarly, the cone $\SR P$ does not in general account for all non-negativities on the semialgebraic set. A complete characterization of the vanishing ideal and the non-negative cone are provided by the Positivstellensatz.

\begin{theorem}[{Formal Positivstellensatz \cite[Proposition~4.4.1]{RealAlgebra}}]
Let $\SR I, \SR P, \SR U$ be an~ideal, a~cone and a~monoid, respectively, in a ring~$A$. The following two statements are equivalent:
\begin{enumerate}[label=(\alph*)]
\item \label{thm:Positivstellen:1}
There does not exist a homomorphism $\phi \colon A \to K$ into an ordered field such that $\phi(f) = 0$ for all $f \in \SR I$, $\phi(p) \ge 0$ for all $p \in \SR P$ and $\phi(u) \not= 0$ for all $u \in \SR U$.
\item \label{thm:Positivstellen:2}
There exist $f \in \SR I$, $p \in \SR P$ and $u \in \SR U$ such that $f = p + u^2$, i.e., $\SR I \cap (\SR P + \SR U^2) \not= \emptyset$.
\end{enumerate}
\end{theorem}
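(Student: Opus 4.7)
The plan is to prove the two directions separately, with essentially all the work concentrated in (a)$\Rightarrow$(b).

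The implication \ref{thm:Positivstellen:2}$\Rightarrow$\ref{thm:Positivstellen:1} is a short computation: given $\phi$ as in \ref{thm:Positivstellen:1} and a relation $f = p + u^2$ as in \ref{thm:Positivstellen:2}, applying $\phi$ yields $0 = \phi(p) + \phi(u)^2$ in the ordered field $K$, with $\phi(p) \ge 0$ and $\phi(u)^2 > 0$ (the latter strict because $\phi(u) \neq 0$ and squares in an ordered field are nonnegative, vanishing only at $0$). This is a contradiction, so such a $\phi$ cannot exist.

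For \ref{thm:Positivstellen:1}$\Rightarrow$\ref{thm:Positivstellen:2} I would argue the contrapositive via a Zorn enlargement. Call a triple $(\SR I', \SR P', \SR U')$ of an ideal, a cone, and a monoid \emph{admissible} if it extends $(\SR I, \SR P, \SR U)$ componentwise and still satisfies $\SR I' \cap (\SR P' + \SR U'^2) = \emptyset$; assuming \ref{thm:Positivstellen:2} fails, the starting triple is admissible. Coordinatewise unions of chains of admissible triples remain admissible, since any putative element of the forbidden intersection would be certified by finitely many elements all of which lie at a common stage of the chain. Zorn's lemma thus yields a maximal admissible triple $(\SR I^*, \SR P^*, \SR U^*)$. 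The structural heart of the proof is then to show that $\SR I^* = |\SR P^*|$, $\SR U^* = A \setminus \SR I^*$, and $\SR P^*$ is a prime cone on $A$. Each fact would be established by a separate extension-and-contradict argument: any alleged failure of primality permits a strict enlargement of one of the three components (e.g.\ adjoining $-a$ or $a$ to $\SR P^*$ when $a$ lies in neither; adjoining $a$ or $b$ to $\SR I^*$ when $ab \in \SR I^*$ with $a, b \notin \SR I^*$); the resulting witnesses of broken admissibility can then be combined --- using the offending relation and the closure properties of $\SR P^*$ and $\SR U^*$ --- into a single element of $\SR I^* \cap (\SR P^* + \SR U^{*2})$, contradicting admissibility.

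Once the prime structure is in place, the conclusion proceeds exactly as in the proof of \Cref{lemma:SatCone}\ref{lemma:SatCone:3}: the prime cone $\SR P^*$ descends to the integral domain $A/\SR I^*$ and extends uniquely to an ordering on the fraction field $K \defas \mathrm{Frac}(A/\SR I^*)$ by~\cite[Proposition~4.3.4]{RealAlgebra}, so the composition $\phi \colon A \onto A/\SR I^* \into K$ provides the homomorphism required by \ref{thm:Positivstellen:1}: it vanishes on $\SR I \subseteq \SR I^*$, is nonnegative on $\SR P \subseteq \SR P^*$, and is nonzero on $\SR U \subseteq \SR U^* = A \setminus \SR I^*$. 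The main obstacle is the coordinated case analysis in the maximality arguments --- three simultaneous component extensions must interact consistently with the single forbidden identity, and combining near-witnesses from different extension attempts into one genuine witness requires careful bookkeeping of which terms end up in $\SR I^*$, which in $\SR P^*$, and which in $\SR U^{*2}$.
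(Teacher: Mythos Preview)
The paper does not prove this theorem at all: it is quoted verbatim as \cite[Proposition~4.4.1]{RealAlgebra} and used as a black box, with only a short paragraph afterwards explaining how the certificate in \ref{thm:Positivstellen:2} witnesses emptiness of the associated semialgebraic set. There is therefore no ``paper's own proof'' to compare against.

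That said, your sketch is a sound route to the result. The easy direction \ref{thm:Positivstellen:2}$\Rightarrow$\ref{thm:Positivstellen:1} is exactly right. For \ref{thm:Positivstellen:1}$\Rightarrow$\ref{thm:Positivstellen:2}, your Zorn argument on admissible triples works, though it is slightly heavier than the more common variant, which first absorbs the ideal into the cone by passing to $\SR T = \SR P + \SR I$ (a cone, since $\SR I$ is an ideal and $\SR I = -\SR I$) and then runs Zorn on cones alone, enlarging $\SR T$ to a prime cone $\SR Q$ with $\SR U \cap |\SR Q| = \emptyset$; the failure of \ref{thm:Positivstellen:2} is precisely the statement that $-u^2 \notin \SR T$ for all $u \in \SR U$, which is what one needs to start that enlargement. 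Your triple formulation is equivalent but forces you to verify three interacting maximality properties instead of one, which is the ``coordinated case analysis'' you flag as the main obstacle. If you want to streamline, collapse to the single-cone version; otherwise your outline is correct as it stands.
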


Take $A = \BB R[x_1, \dots, x_n]$ and suppose that $\SR I$, $\SR P$ and $\SR U$ are finitely generated. They define a semialgebraic set~$\Theta \subseteq \BB R^n$. Condition~\ref{thm:Positivstellen:1} applied to all evaluation homomorphisms $A \to \BB R$ implies that the equality, non-negativity and non-vanishing conditions are incompatible over $\BB R$; in other words, the semialgebraic set $\Theta$ is empty.
Condition~\ref{thm:Positivstellen:2} then ensures the existence of a certificate for this emptiness, for if $f = p + u^2$ holds in~$A$, then it holds under every homomorphism $\phi$ into $\BB R$. But on every point $x \in \Theta$, the left-hand side $f(x) = 0$ while the right-hand side $p(x) + u(x)^2 > 0$. This shows that there cannot be a point in~$\Theta$.

The Positivstellensatz shows that the ideal, cone and monoid defining $\Theta$ geometrically contain enough information to decide whether it is empty. To check whether a polynomial $F$ vanishes on~$\Theta$, one simply adds $F$ to the monoid $\SR U$, thus defining the subset $\Theta'$ of points in $\Theta$ on which $F$ does not vanish. The function~$F$ vanishes on $\Theta$ if and only if $\Theta' = \emptyset$ and the certificate for emptiness from condition~\ref{thm:Positivstellen:2} can be rewritten to derive a representation of~$F$. This technique can be used to characterize the vanishing, non-negative and positive polynomials on $\Theta$ in terms of its ideal, cone and monoid.

\begin{corollary}[Positivstellensatz] \label{cor:Positivstellen}
Let $\SR I$, $\SR P$ and $\SR U$ be an ideal, a cone and a monoid in $A = \BB R[x_1, \dots, x_n]$. Let $\Theta \subseteq \BB R^n$ be the set they define. For any $F \in A$:
\begin{enumerate}[label=(\arabic*), itemsep=0.3em, parsep=0pt]
\item $F = 0$ on $\Theta$ if and only if $\exists m \ge 0, u \in \SR U \colon -F^{2m} u^2 \in \SR I + \SR P$.
\item $F \ge 0$ on $\Theta$ if and only if $\exists m \ge 0, p \in \SR P, u \in \SR U \colon -F^{2m} u^2 + F p \in \SR I + \SR P$.
\item $F > 0$ on $\Theta$ if and only if $\exists p \in \SR P: F p \in \SR I + \SR P + \SR U^2$.
\end{enumerate}
\end{corollary}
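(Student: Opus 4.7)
The plan is to derive all three items from the Formal Positivstellensatz by applying it to the triple $(\SR I, \SR P, \SR U)$ after an augmentation that turns the condition on $F$ into the emptiness of a basic semialgebraic subset of $\BB R^n$. Throughout I would invoke the geometric form of the Positivstellensatz over $\BB R$, which equates the emptiness of such a set with the existence of a Positivstellensatz certificate as in~\ref{thm:Positivstellen:2} for its defining data.

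For~(1), $F$ vanishes on $\Theta$ iff $\Theta \cap \{F \neq 0\} = \emptyset$, and the latter set is defined by $\SR I$, $\SR P$, and the enlarged monoid $\SR U \cdot \monoid{F}$. The certificate $f = p + (uF^m)^2$ with $f \in \SR I$, $p \in \SR P$, $u \in \SR U$, $m \ge 0$ then rearranges directly to $-F^{2m}u^2 \in \SR I + \SR P$ (using that $\SR I$ is closed under negation). For~(3), $F > 0$ on $\Theta$ iff $\Theta \cap \{F \le 0\} = \emptyset$, captured by replacing $\SR P$ with $\cone_{\SR P}{-F}$; since $(-F)^2 \in \SOS A \subseteq \SR P$, a generic element of the enlarged cone has the form $p_0 - F p_1$ with $p_0, p_1 \in \SR P$, and the certificate $f = p_0 - F p_1 + u^2$ rearranges to $F p_1 \in \SR I + \SR P + \SR U^2$. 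For~(2), $F \ge 0$ on $\Theta$ translates to $\Theta \cap \{F < 0\} = \emptyset$, which requires \emph{both} augmentations simultaneously; combining the two rewrites above yields $-F^{2m} u^2 + F p_1 \in \SR I + \SR P$. The converse implications are a routine sign check: evaluating each certificate at any $x \in \Theta$ makes $\SR I$ contribute zero, $\SR P$ a non-negative number, and $\SR U$ a strictly positive one, which pins down the advertised sign of $F(x)$.

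The main obstacle is not the bookkeeping with ideals, cones, and monoids --- that is entirely routine once the right augmentation is identified --- but rather the implicit use of the geometric version of the Positivstellensatz over $\BB R$, which equates emptiness of a basic semialgebraic set in $\BB R^n$ with the formal incompatibility~\ref{thm:Positivstellen:1} over \emph{arbitrary} ordered fields, the actual hypothesis of the Formal Positivstellensatz. This transfer depends on $\BB R$ being real closed, via Tarski--Seidenberg; I would invoke it as a standard consequence of the formal theorem rather than re-derive it, but it deserves to be flagged as the genuine analytic content underlying the whole corollary.
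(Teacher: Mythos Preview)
Your proposal is correct and follows exactly the approach the paper sketches in the paragraph preceding the corollary: augment the monoid by $F$ for~(1), the cone by $-F$ for~(3), and both for~(2), then rearrange the certificate from condition~\ref{thm:Positivstellen:2}. Your explicit flagging of the transfer principle (that emptiness over $\BB R^n$ suffices for the formal incompatibility~\ref{thm:Positivstellen:1} over all ordered fields, via real closedness of~$\BB R$) is a welcome addition that the paper leaves implicit.
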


\section{Birational implicitization}
\label{sec: biri}

We now use the preliminaries on localizations from \Cref{sec: localization} to compute the images of ideals, cones and monoids under a birational isomorphism. By the Positivstellensatz in \Cref{sec: Positivstellen}, this is enough to characterize all vanishing, non-negative and positive polynomials on the image of a rational map. This yields our solution to the implicitization problem in \Cref{thm:Biri}.

The~supporting lemmas hold in great generality, only supposing that the ambient rings are integral domains. In~particular, the ideals, cones and monoids we consider need not be finitely generated. However, our proofs preserve finite generation and they yield the most practical results in this case. With applications in mind, it is instructive to pretend that all rings are polynomial rings with finitely generated ideals, cones and monoids.

\subsection{Full birational isomorphisms}

We work in a geometric setting where two integral domains $A$ and $\<A$ have isomorphic localizations $S^{-1} A \cong \<S^{-1} \<A$. If $A$ and $\<A$ are the coordinate rings of affine varieties $\CC V$ and $\<{\CC V}$, respectively, then this setup amounts to saying that $\CC V$ and $\<{\CC V}$ are birational. We spell out the localization explicitly because the localizing monoids $S$ and $\<S$ play a role in the results we are after, and more control over them, e.g., knowing a finite generating set, enables stronger conclusions.

\begin{definition}
Let $A$ and $\<A$ be integral domains and $S \subseteq A$ and $\<S \subseteq \<A$ monoids such that the localizations $S^{-1} A$ and $\<S^{-1} \<A$ are isomorphic. Thus we have the following diagram:
\begin{center}
\begin{tikzcd}
  A  \arrow[r, hook]  & S^{-1}   A \arrow[r, "\phi", shift left] &
\<S^{-1} \<A \arrow[l, "\psi", shift left] & \<A \arrow[l, hook']
\end{tikzcd}
\end{center}
We refer to this situation as a \emph{birational isomorphism} between $A$ and $\<A$ and denote it more concisely by $\phi\colon S^{-1} A \iso \<S^{-1} \<A \revcolon \psi$. The isomorphism is \emph{full} if $\<S^\pm = \phi(S^\pm)$ (and thus $S^\pm = \psi(\<S^\pm)$).
\end{definition}

For convenience we prefer to work with full birational isomorphisms. Thus our first concern is to extend a given isomorphism to a full one.

\begin{lemma} \label{lemma:Extend}
Let $\phi\colon S^{-1} A \iso \<S^{-1} \<A \revcolon\psi$ be a birational isomorphism and $T = A \cap (S^\pm \cdot \psi(\<S^\pm))$ and $\<T = \<A \cap (\<S^\pm \cdot \phi(S^\pm))$. Then there exist homomorphisms $\ol\phi$ and $\ol\psi$ such that the following diagram commutes:
\begin{center}
\begin{tikzcd}
  A  \arrow[r, hook]  & S^{-1}   A \arrow[d, "\phi"', shift right] \arrow[r, hook] & T^{-1} A \arrow[d, "\ol{\phi}"', shift right] \\[1.3em]
\<A \arrow[r, hook] & \<S^{-1} \<A \arrow[u, "\psi"', shift right] \arrow[r, hook] & \<T^{-1} \<A \arrow[u, "\ol{\psi}"', shift right]
\end{tikzcd}
\end{center}
In particular, $\ol\phi: T^{-1} A \iso \<T^{-1} \<A \revcolon\ol\psi$ is a full birational isomorphism between $A$ and~$\<A$.
\end{lemma}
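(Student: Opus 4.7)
The plan is to observe that enlarging $S$ to $T$ does not actually enlarge the localization, so that $\ol\phi$ and $\ol\psi$ can be taken to be $\phi$ and $\psi$ themselves, after which only fullness requires a small calculation.

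First I would verify that $T$ and $\<T$ are genuine monoids in $A$ and $\<A$. Closure under multiplication is inherited from the monoid $S^\pm \cdot \psi(\<S^\pm)$ in $S^{-1} A$; the identity lies in $T$ via $1 = 1 \cdot \psi(1)$; and $0 \notin T$ because every element of $S^\pm \cdot \psi(\<S^\pm)$ is a unit in $S^{-1} A$, as $S^\pm$ consists of units and $\psi$ carries the units $\<S^\pm$ of $\<S^{-1} \<A$ to units of $S^{-1} A$.

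The key step is to note that, inside the fraction field of $A$, one has $T^{-1} A = S^{-1} A$. Since any $s \in S$ equals $s \cdot \psi(1)$ one has $S \subseteq T$, giving $S^{-1} A \subseteq T^{-1} A$. Conversely, the universal property produces a map $T^{-1} A \to S^{-1} A$ extending $A \hookrightarrow S^{-1} A$ because every element of $T$ is already a unit in $S^{-1} A$; this map is injective because $A$ is a domain. Combining both inclusions yields the equality, and symmetry gives $\<T^{-1} \<A = \<S^{-1} \<A$. Setting $\ol\phi := \phi$ and $\ol\psi := \psi$ with the relabelled source and target then makes the extended diagram commute by construction.

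It remains to verify fullness by computing $T^\pm$ explicitly. If $t_1, t_2 \in T$ with $t_i = \alpha_i \beta_i$ for $\alpha_i \in S^\pm$ and $\beta_i \in \psi(\<S^\pm)$, then $t_1/t_2 = (\alpha_1/\alpha_2)(\beta_1/\beta_2) \in S^\pm \cdot \psi(\<S^\pm)$, so $T^\pm \subseteq S^\pm \cdot \psi(\<S^\pm)$. Conversely, given $\alpha\beta \in S^\pm \cdot \psi(\<S^\pm)$, write it as $c/s$ with $c \in A$ and $s \in S$; then $c = s\alpha\beta \in A \cap (S^\pm \cdot \psi(\<S^\pm)) = T$ and $s \in S \subseteq T$, hence $\alpha\beta \in T^\pm$. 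This gives $T^\pm = S^\pm \cdot \psi(\<S^\pm)$ and, analogously, $\<T^\pm = \<S^\pm \cdot \phi(S^\pm)$, whence $\ol\phi(T^\pm) = \phi(S^\pm) \cdot \<S^\pm = \<T^\pm$ as required. The main conceptual point is that $T$ does not really enlarge the localization; it only enlarges the bookkeeping monoid tracking which units come from $A$ and which from $\<A$, which is exactly what fullness demands.
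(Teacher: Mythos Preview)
Your proof is correct and a bit more direct than the paper's. The paper does not explicitly observe that $T^{-1}A = S^{-1}A$; instead it uses the universal property of localization to extend the composite $S^{-1}A \xrightarrow{\phi} \<S^{-1}\<A \hookrightarrow \<T^{-1}\<A$ to a map $\ol\phi\colon T^{-1}A \to \<T^{-1}\<A$ (after checking that $\phi(T)$ lands in the units of $\<T^{-1}\<A$), and then verifies $\ol\psi\circ\ol\phi = \id$ by direct computation on fractions. Your observation that every element of $T$ is already a unit in $S^{-1}A$ short-circuits this: the ``extension'' is just $\phi$ itself under the identification $T^{-1}A = S^{-1}A$, and invertibility is inherited for free. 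For fullness, the paper argues via the two inclusions $\phi(T) \subseteq \<T^\pm$ and $\psi(\<T) \subseteq T^\pm$ and then uses $\ol\phi^{-1} = \ol\psi$ to upgrade them to the equality $\ol\phi(T^\pm) = \<T^\pm$; your explicit computation $T^\pm = S^\pm \cdot \psi(\<S^\pm)$ reaches the same conclusion and has the added benefit of identifying $T^\pm$ concretely. Both arguments are short; yours makes the conceptual point---that enlarging $S$ to $T$ changes only the bookkeeping, not the ring---more transparent.
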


\begin{proof}
The inclusion $S \subseteq T$ induces a canonical inclusion $S^{-1} A \into T^{-1} A$. Since $\phi(T) \subseteq \phi(S^\pm) \cdot \<S^\pm \subseteq \<S^{-1} \<T$ is a subset of the units of $\<T^{-1} \<A$, the universal property of localization \cite[Proposition~3.1]{AtiyahMacdonald} extends the map $S^{-1} A \xto{$\phi$} \<S^{-1} \<A \into \<T^{-1} \<A$ canonically to $\ol{\phi}\colon T^{-1} A \to \<T^{-1} \<A$. The same applies to $\<T$ and $\psi$ giving the map $\ol\psi$. To see that $\ol\phi$ and $\ol\psi$ are mutually inverse, it~suffices to compute
\[
  \ol\psi\left(\ol\phi\left(\frac{a}{t}\right)\right) = \ol\psi\left(\frac{\phi(a)}{\phi(t)}\right) = \frac{\psi(\phi(a))}{\psi(\phi(t))} = \frac{a}{t}.
\]
This extends the isomorphism given by $\phi$ and $\psi$ to a birational isomorphism $\ol\phi\colon T^{-1} A \iso \<T^{-1} \<A \revcolon\ol\psi$. It was already shown above that $\phi(T) \subseteq \<S^{-1} \<T \subseteq \<T^\pm$, and $\psi(\<T) \subseteq T^\pm$ follows analogously. By~introducing denominators and using that $\ol\phi^{-1} = \ol\psi$, these two inclusions become $\ol\phi(T^\pm) = \<T^\pm$. Hence the isomorphism is full.
\end{proof}

Under mild additional assumptions, the extension constructed in \Cref{lemma:Extend} even preserves finite generation of the localizing monoids.

\begin{lemma} \label{lemma:ExtendFinite}
Let $\phi\colon S^{-1} A \iso \<S^{-1} \<A \revcolon\psi$ be a birational isomorphism between unique factorization domains $A$ and $\<A$ and suppose that $S$ and $\<S$ are finitely generated by irreducible elements. Then $T = A \cap (S^\pm \cdot \psi(\<S^\pm))$ and $\<T = \<A \cap (\<S^\pm \cdot \phi(S^\pm))$ are finitely generated and the canonically extended birational isomorphism $\ol{\phi}\colon T^{-1} A \iso \<T^{-1} \<A \revcolon\ol{\psi}$ is full.
\end{lemma}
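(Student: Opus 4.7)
Fullness of the extended birational isomorphism is already established by \Cref{lemma:Extend}, so the only new content is the finite generation of $T$, with the argument for $\<T$ entirely symmetric. The plan is to reduce this to a standard lattice-combinatorial fact (Gordan's lemma) via unique factorization in $A$.

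Invoking the UFD hypothesis, I would first write each $\psi(\<s_j) \in S^{-1}A$ in reduced form $n_j/d_j$, with $n_j \in A$, $d_j$ a product of the irreducibles $s_i$, and $\gcd(n_j, d_j) = 1$. Let $Q$ be the finite set of irreducibles of $A$, up to associates, appearing in any $s_i$ or any $n_j$. The multiplicative subgroup $M \defas S^\pm \cdot \psi(\<S^\pm) \subseteq \RM{Frac}(A)^\times$ is then finitely generated by $s_i^{\pm 1}$ and $\psi(\<s_j)^{\pm 1}$, and the unique factorization of any element of $M$ involves only irreducibles from~$Q$. Hence the valuation map $v_Q \colon f \mapsto (\RM{ord}_q f)_{q \in Q}$ restricts to a group homomorphism $M \to \BB Z^Q$ whose image is a finitely generated sublattice $L \subseteq \BB Z^Q$ and whose kernel is $M \cap A^\times$. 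Since $m \in M$ lies in $A$ precisely when $v_Q(m) \in \BB N^Q$, one identifies $T = A \cap M$ with $v_Q^{-1}(L \cap \BB N^Q)$.

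Gordan's lemma then supplies finite monoid generators of the affine semigroup $L \cap \BB N^Q$; lifting each to some $t_l \in T$ and adjoining a finite monoid generating set of the subgroup $M \cap A^\times$ yields a finite monoid generating set of~$T$. Here $M \cap A^\times$ is a subgroup of the finitely generated abelian group $M$, hence itself finitely generated; moreover any $u \in M \cap A^\times$ and its inverse $u^{-1}$ both lie in $A \cap M = T$, so group generators together with their inverses furnish monoid generators. The main technical obstacle is precisely this bookkeeping of units: Gordan's lemma controls only the exponent vectors of elements of $T$, while elements of $T$ sharing the same exponent vector may differ by any element of $M \cap A^\times$, so the separate finite-generation statement for this auxiliary subgroup is essential in order to lift the lattice-theoretic conclusion back to a finite monoid generating set inside $A$.
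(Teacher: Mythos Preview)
Your argument is correct, but it takes a genuinely different route from the paper's proof. The paper exploits the irreducibility of the generators $\<s_\ell$ directly: it argues that $\num \psi(\<s_\ell)$, after stripping off factors already in $S$, is a single irreducible $s_\ell'$ (or a unit), and then shows that $T$ equals the explicit monoid $U = \monoid{s_\ell, s_\ell' : \ell \in L}$ by checking that $U$ is saturated at $S$ via unique factorization. Your approach instead passes to exponent vectors in $\BB Z^Q$ and invokes Gordan's lemma on the affine semigroup $L \cap \BB N^Q$, handling the unit ambiguity separately through the finitely generated subgroup $M \cap A^\times$.

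The paper's argument is more constructive: it yields an explicit generating set with one new irreducible per generator of $\<S$, which is exactly what gets used in applications such as \Cref{ex:Lyapunov}. Your argument is more robust---it does not rely on the delicate claim that $\num \psi(\<s_\ell)$ contributes only one new irreducible (which the paper asserts but does not spell out), and in fact your proof goes through verbatim assuming only that $S$ and $\<S$ are finitely generated, without requiring the generators to be irreducible. The trade-off is that Gordan's lemma gives no control over the number or shape of the generators of $T$.
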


\begin{proof}
Given \Cref{lemma:Extend}, we only need to proof that $T$ is finitely generated (the proof for $\<T$ is symmetric). Suppose that $S = \monoid{s_\ell : \ell \in L}$ and $\<S = \monoid{\<s_\ell : \ell \in L}$ are generated by irreducibles. (Without loss of generality, we may suppose that a single finite set $L$ indexes both sets of generators.)
It is easy to infer from the irreducibility of $\<s_\ell$ and the assumption that $S$ and $\<S$ are generated by irreducibles that $\num \psi(\<s_\ell) = s_\ell' s$ for some $s \in S$ and an element $s_\ell' \in A$ which is irreducible or a unit.
We claim that $T = U \defas \monoid{s_\ell, s_\ell' : \ell \in L}$.
One inclusion is clear since $s_\ell, s_\ell' \in T$. For~the other inclusion, notice that $S^{-1} U = S^\pm \cdot \psi(\<S^\pm)$. \Cref{lemma:SatMonoid} shows that the contraction is $T = A \cap S^{-1} U = U : S$, so it remains to show that $U$ is saturated at~$S$. Take any $a \in A$ with $sa = u \in U$. Since $A$ is a UFD, $u$ can be written (uniquely, up to a unit of $A$) as a product of irreducibles. Since $u$ is a product of the generators of $U$, which are all irreducible, its irreducible decomposition can be assumed to have all its factors in~$U$. Removing the irreducible factors corresponding to $s \in S \subseteq U$ writes $a$ as a product of generators of~$U$, hence~$a \in U$.
\end{proof}

\subsection{Transferring ideals, cones and monoids}

We are now ready to prove the lemmas supporting our main theorem. The idea is always the same: given an ideal, cone or monoid $\<{\SR X}$ in $\<A$, it can be extended to $\<S^{-1} \<A$, sent through the isomorphism to $S^{-1} A$ and then contracted to~$A$ yielding a corresponding ideal, cone or monoid~$\SR X$. We give generators of $\SR X$ in terms of generators of $\<{\SR X}$ up to saturation at~$S$. For example, if the birational isomorphism is established by the pullback of a rational parametrization map and $\<{\SR X}$ is the vanishing ideal of the parameter space, then $\SR X$ will be the vanishing ideal of the image.

\begin{lemma} \label{thm:Ideal}
Let $\phi \colon S^{-1} A \iso \<S^{-1} \<A \revcolon \psi$ be a full birational isomorphism.
Consider an ideal $\<{\SR I} = \ideal_{\<A}{\<f_i : i \in I}$ with $\<{\SR I} \cap \<S = \emptyset$, where $I$ is some index set. Let $\SR I = A \cap \psi(\<S^{-1} \<{\SR I})$ be the corresponding ideal in $A$ and $\SR J = \ideal_A{ \num \psi(\<f_i) : i \in I }$ the ideal of numerators.  Then:
\begin{inparaenum}
\item\label{thm:Ideal:1}$\SR I \cap S = \emptyset$.
\item\label{thm:Ideal:2}The~chain of inclusions $\SR J \subseteq \SR I \subseteq \SR J:S = \SR I:S$ holds.
\item\label{thm:Ideal:3}If~$\<{\SR I}$ is prime or real, then so is~$\SR I$.
\item\label{thm:Ideal:4}If~$\<{\SR I}$ is saturated at $\<S$, then $\SR I$ is saturated at~$S$.
In this case $\SR I = \SR J:S$.
\end{inparaenum}
\end{lemma}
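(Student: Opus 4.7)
The plan is to route all four claims through a single identity: viewed as an ideal of $S^{-1}A$,
\[\psi(\<S^{-1}\<{\SR I}) = S^{-1}\SR J.\]
Once this is available, every part reduces to \Cref{lemma:SatIdeal} applied to $\SR J$, together with standard facts about ring isomorphisms and contractions.

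First I would establish the identity. Since $\<S^{-1}\<{\SR I}$ is generated by the $\<f_i$ in $\<S^{-1}\<A$ (by \Cref{lemma:SatIdeal}(1)), its image under the isomorphism $\psi$ is the ideal of $S^{-1}A$ generated by $\psi(\<f_i) = n_i/s_i$, where $n_i := \num\psi(\<f_i)$ and $s_i \in S$. Because $s_i$ is a unit in $S^{-1}A$, $\psi(\<f_i)$ and $n_i$ generate the same principal ideal there, so the whole image equals $\ideal_{S^{-1}A}{n_i} = S^{-1}\SR J$, using \Cref{lemma:SatIdeal}(1) once more for the last step.

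With this in hand, the claims follow quickly. For \ref{thm:Ideal:1}, the hypothesis $\<{\SR I} \cap \<S = \emptyset$ makes $\<S^{-1}\<{\SR I}$ proper (\Cref{lemma:SatIdeal}(1)); the isomorphism transports this to properness of $S^{-1}\SR J$, forcing $\SR J \cap S = \emptyset$, and since $\SR I$ is the contraction of a proper ideal, $\SR I \cap S = \emptyset$. For \ref{thm:Ideal:2}, the numerator $n_i = s_i \cdot \psi(\<f_i)$ lies in $\psi(\<S^{-1}\<{\SR I}) \cap A = \SR I$, giving $\SR J \subseteq \SR I$; the reverse inclusion $\SR I \subseteq \SR J:S$ is literally \Cref{lemma:SatIdeal}(2) applied to $\SR J$, and $\SR J:S = \SR I:S$ then follows formally from monotonicity and idempotency of saturation. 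Claims \ref{thm:Ideal:3} and \ref{thm:Ideal:4} are propagation statements: primality and realness pass along the chain $\<{\SR I} \rightsquigarrow \<S^{-1}\<{\SR I} \xrightarrow{\psi} S^{-1}\SR J \rightsquigarrow \SR I$ by \Cref{lemma:SatIdeal}(3) and (4), preservation under a ring isomorphism, and the elementary fact that contractions preserve both properties (for realness, $\sum a_i^2 \in A \cap \SR I'$ with $\SR I'$ real forces each $a_i \in \SR I' \cap A$; primality is analogous). Saturatedness of $\SR I$ at $S$ then follows either directly, by noting that $sa \in \SR I \subseteq S^{-1}\SR J$ implies $a = sa/s \in S^{-1}\SR J$ and hence $a \in \SR I$, or more compactly from the identity $\SR I = A \cap S^{-1}\SR J = \SR J:S$ which makes $\SR I$ automatically saturated by idempotency.

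The only subtlety to watch for is that $\num\psi(\<f_i)$ is defined only up to multiplication by $S$, so $\SR J$ depends on a choice of representatives. However, every statement in the conclusion is invariant under replacing $n_i$ by $s n_i$ for $s \in S$ (the extension $S^{-1}\SR J$ and the saturation $\SR J:S$ are unchanged, and $\SR J \subseteq \SR I$ holds representative-wise), so any convenient choice suffices.
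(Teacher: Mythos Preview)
Your proposal is correct and follows the same overall strategy as the paper --- extend $\<{\SR I}$, transport through $\psi$, contract to $A$ --- but organizes it more tightly around the single identity $\psi(\<S^{-1}\<{\SR I}) = S^{-1}\SR J$. The paper instead argues each part by element chasing: for \ref{thm:Ideal:2} it writes an arbitrary $f \in \SR I$ as $\phi(f) = \sum_i \<h_i \<f_i$ and clears denominators to land in $\SR J$, and for \ref{thm:Ideal:4} it explicitly uses the saturatedness of $\<{\SR I}$ at $\<S$ to push an element of $\SR I:S$ back into $\SR I$. Your identity short-circuits both steps: once $\psi(\<S^{-1}\<{\SR I}) = S^{-1}\SR J$ is established, \Cref{lemma:SatIdeal}(2) gives $\SR I = A \cap S^{-1}\SR J = \SR J:S$ outright, so the chain in \ref{thm:Ideal:2} collapses to an equality on the right and $\SR I$ is saturated at $S$ by idempotency --- with no hypothesis on $\<{\SR I}$ needed. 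This is a mild sharpening that the paper's element-wise treatment leaves hidden; on \ref{thm:Ideal:1} and \ref{thm:Ideal:3} the two arguments are essentially identical.
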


\begin{proof}
\begin{paraenum}
\item[\ref{thm:Ideal:1}]
Take any element $s \in \SR I \cap S$. Then $\phi(s) \in \<S^{-1} \<{\SR I} \cap \<S^\pm$. But then by clearing denominators there exists $\<s \in \<S$ such that $\<s \phi(s) \in \<{\SR I} \cap \<S$, a contradiction.

\item[\ref{thm:Ideal:2}]
By definition $\phi(f_i) = \phi(s_i) \<f_i$ for some $s_i \in S$. Since $\phi(S^\pm) = \<S^\pm$, this shows $\phi(f_i) \in \<S^{-1} \<{\SR I}$ for each $i$ and thus $\SR J \subseteq \SR I$. This implies $\SR J : S \subseteq \SR I : S$.
For the middle inclusion let $f \in \SR I$ be arbitrary. Since $\phi(f) \in \<S^{-1} \<{\SR I}$, there exist a finite subset $I' \subseteq I$ and $\<h_i \in \<S^{-1} \<A$ such that $\phi(f) = \sum_{i\in I'} \<h_i \<f_i$. The images $\psi(\<h_i \<f_i)$ belong to $S^{-1} A$ and since there are only finitely many such terms, there exists a common denominator $s \in S$ of all of these fractions so that $s \psi(\<h_i \<f_i) = s \psi(\<h_i) \sfrac{f_i}{s_i} = h_i f_i$ with $h_i \in A$. Then the computation
\[
  s f = s \psi(\phi(f)) = \sum_{i \in I'} s \psi(\<h_i \<f_i) = \sum_{i \in I'} h_i f_i \in \SR J,
\]
shows $f \in \SR J:S$ and thus $\SR I \subseteq \SR J:S$. Saturating the chain of inclusions $\SR I \subseteq \SR J:S \subseteq \SR I:S$ at $S$ again yields $\SR I:S = \SR J:S$.

\item[\ref{thm:Ideal:3}]
Suppose that $\<{\SR I}$ is real or prime. Since it is disjoint from $\<S$, the extension $\<S^{-1} \<{\SR I}$ in the localized ring is real, respectively prime, by \Cref{lemma:SatIdeal}. Both properties are preserved by the isomorphism $\psi$ and the contraction into~$A$ (which is the preimage of the canonical inclusion map $A \into S^{-1} A$).

\item[\ref{thm:Ideal:4}]
Now suppose that $\<{\SR I}$ is saturated at~$\<S$ and take any $sa \in \SR I$ for $a \in A$ and $s \in S$. By~definition of~$\SR I$, there exists $\<s \in \<S$ such that $\<s \phi(sa) \in \<{\SR I}$. Since $\phi(sa) = \phi(s) \phi(a)$ with $\phi(s) \in \<S^\pm$, there is another $\<s' \in \<S$ such that $\<s \<s' \phi(a) \in \<{\SR I}$. By assumption, $\<{\SR I}$ is saturated at $\<S$, so $\phi(a) \in \<{\SR I}$ and thus $a \in \SR I$. This proves that $\SR I$ is saturated at~$S$ and thus the chain $\SR I \subseteq \SR J : S \subseteq \SR I:S = \SR I$ collapses proving the equality $\SR I = \SR J : S$.
\qedhere
\end{paraenum}
\end{proof}

\begin{remark}
A version of this lemma appeared as \cite[Lemma~4.8]{ColoredDAG} in the context of colored Gaussian DAG models. Its formulation there did not require $\phi$ to be an isomorphism of localized rings; it only required a left-inverse~$\psi$. Under the additional assumption that $\phi$ is surjective, \cite[Lemma~4.8]{ColoredDAG} turns into a special case of \Cref{thm:Ideal}.
\end{remark}

\begin{lemma} \label{thm:Cone}
Let $\phi \colon S^{-1} A \iso \<S^{-1} \<A \revcolon \psi$ be a full birational isomorphism.
Consider a cone $\<{\SR P} = \cone_{\SOS{\<A}}{\<p_i : i \in I}$ with $\<S \subseteq \<{\SR P}^+$, where $I$ is some index set. Let $\SR P = A \cap \psi(\<S^{-2} \<{\SR P})$ be the corresponding cone in $A$ and $\SR Q = \cone_{\SOS{A}}{\num \psi(\<p_i) : i \in I}$ the cone of numerators.  Then:
\begin{inparaenum}
\item\label{thm:Cone:1}$S \subseteq \SR P^+$.
\item\label{thm:Cone:2}The inclusions $\SR Q \subseteq \SR P \subseteq \cone_{\SR Q}{S} : S = \SR P : S$ hold.
\item\label{thm:Cone:3}If~$\<{\SR P}$ is prime, then so is $\SR P$.
\item\label{thm:Cone:4}If~$\<{\SR P}$ is saturated at $\<S$, then $\SR P$ is saturated at~$S$.
In this case $\SR P = \cone_{\SR Q}{S}:S$.
\end{inparaenum}
\end{lemma}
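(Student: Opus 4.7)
My plan is to mirror the proof of \Cref{thm:Ideal} throughout, substituting \Cref{lemma:SatCone} for \Cref{lemma:SatIdeal} wherever a cone localization arises. Each claim transfers a property from $\<{\SR P}$ to $\SR P$ by the same three-step pattern: extend to $\<S^{-2}\<{\SR P}$, transport through $\psi$, and contract back to~$A$.

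For \ref{thm:Cone:1}, I would first verify $S \subseteq \SR P$: for $s \in S$, fullness gives $\phi(s) = \<s_1/\<s_2 \in \<S^\pm$ with $\<s_i \in \<S \subseteq \<{\SR P}$, and rewriting as $\<s_1\<s_2/\<s_2^2$ exhibits $\phi(s)$ as an element of $\<S^{-2}\<{\SR P}$. To exclude $-s \in \SR P$, I would argue by contradiction: if both $\pm\phi(s)$ lie in $\<S^{-2}\<{\SR P}$, then cone multiplication yields $-\phi(s)^2$ there, and multiplying further by $(\phi(s)^{-1})^2 \in \SOS{\<S^{-1}\<A}$ produces $-1$, contradicting the properness of $\<S^{-2}\<{\SR P}$ that follows from \Cref{lemma:SatCone} combined with the hypothesis $\<S \subseteq \<{\SR P}^+$.

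For \ref{thm:Cone:2}, the inclusion $\SR Q \subseteq \SR P$ uses the same canonical-form trick applied to each generator $q_i = \num\psi(\<p_i)$: the relation $\phi(q_i) = \phi(s_i)\<p_i$ rewrites to $\<s_1\<s_2\<p_i/\<s_2^2 \in \<S^{-2}\<{\SR P}$ after clearing $\phi(s_i) \in \<S^\pm$ into a square denominator. For the middle inclusion $\SR P \subseteq \cone_{\SR Q}{S}:S$, I would take $f \in \SR P$, write $\phi(f) = \<p/\<s^2$ with $\<p = \sum_j \<\sigma_j \prod_{i \in I_j}\<p_i$ expanded as a finite cone combination, apply $\psi$, and collect all resulting denominators into a single $t \in S$ witnessing $tf \in \cone_{\SR Q}{S}$. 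The reverse inclusion $\cone_{\SR Q}{S} \subseteq \SR P$ is immediate from \ref{thm:Cone:1} together with $\SR Q \subseteq \SR P$, so saturation monotonicity and idempotency close the chain to the claimed equality of saturations.

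Parts \ref{thm:Cone:3} and \ref{thm:Cone:4} proceed in parallel with the ideal case. For \ref{thm:Cone:3}, \Cref{lemma:SatCone} supplies a prime cone $\<S^{-2}\<{\SR P}$ in $\<S^{-1}\<A$, and primeness is preserved by the ring isomorphism $\psi$ and by preimage along the inclusion $A \hookrightarrow S^{-1}A$. For \ref{thm:Cone:4}, given $sf \in \SR P$ I would clear $\phi(s) \in \<S^\pm$ from the representing fraction of $\phi(sf) \in \<S^{-2}\<{\SR P}$ by multiplying numerator and denominator by a further element of $\<S$, and then invoke the saturation of $\<{\SR P}$ at $\<S$ to deduce $\phi(f) \in \<S^{-2}\<{\SR P}$; combined with \ref{thm:Cone:2} this forces $\SR P = \cone_{\SR Q}{S}:S$. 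I expect the principal obstacle to be the denominator bookkeeping in \ref{thm:Cone:2}, where the sum-of-squares factors $\<\sigma_j$ introduce squared denominators in addition to the $S$-denominators from the $\<p_i$, and these must be organised into a single $t \in S$ producing membership in $\cone_{\SR Q}{S}$ rather than merely in $\SR P$.
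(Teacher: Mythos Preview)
Your proposal is correct and follows essentially the same approach as the paper's proof, which explicitly parallels \Cref{thm:Ideal} and defers parts~\ref{thm:Cone:3} and~\ref{thm:Cone:4} as ``entirely analogous'' to that lemma. The only minor variation is in~\ref{thm:Cone:1}: the paper reaches the contradiction by producing an element of $-\<{\SR P} \cap \<S$ directly, whereas you route through $-1 \in \<S^{-2}\<{\SR P}$ via properness from \Cref{lemma:SatCone}; both arguments are equivalent and the remaining bookkeeping in~\ref{thm:Cone:2} matches the paper's treatment closely.
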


\begin{proof}
\begin{paraenum}
\item[\ref{thm:Cone:1}]
First note that fullness and $\<S \subseteq \<{\SR P}$ imply $\phi(S) \subseteq \<S^\pm \subseteq \<S^{-2} \<{\SR P}$ and thus $S \subseteq \SR P$. If $s \in -\SR P \cap S$, then $\phi(s) \in -\phi(\SR P) \cap \<S^\pm \subseteq -\<S^{-2} \SR P \cap \<S^\pm$. Thus there exists $\<s \in \<S$ such that $\<s \phi(s) \in -\<{\SR P} \cap \<S$, a contradiction.

\item[\ref{thm:Cone:2}]
Consider any generator $p_i$ of $\SR Q$. By definition $\phi(p_i) = \phi(s_i) \<p_i$ for some $s_i \in S$, but since $\phi(s_i) \in \<S^\pm \subseteq \<S^{-2} \<{\SR P}$, this shows $p_i \in \SR P$ and thus $\SR Q \subseteq \SR P$.
For the middle inclusion let $p \in \SR P$ be arbitrary. Since~$\phi(p) \in \<S^{-2} \<{\SR P}$, there exist a finite subset $I' \subseteq I$, an element $\<s \in \<S$ and $\<\sigma_K \in \SOS{\<A}$, for $K \subseteq I'$, such that $\phi(p) = \frac{1}{\<s^2} \sum_{K \subseteq \in I'} \<\sigma_K \<p^K$, where we use monomial-subset notation $\<p^K \defas \prod_{k \in K} \<p_k$. There is a common denominator $s \in S$ for the finitely many fractions $\psi(\frac{\<\sigma_K}{\<s^2} \<p^K) \in S^{-1} A$ so that
\[
  s \psi\left(\frac{\<\sigma_K}{\<s^2} \<p^K\right) = s \psi\left(\frac{\<\sigma_K}{\<s^2}\right) \prod_{k \in K} \frac{p_k}{s_k} = \sigma_K s_K p^K,
\]
for some $\sigma_K \in \SOS{A}$ and $s_K \in S$. Note that unlike in the ideal case \Cref{thm:Ideal}, multiplying through by the common denominator $s$ leaves factors $s_K \in S$ in the above expression which cannot necessarily be absorbed into the coefficient~$\sigma_K$ or the product of generators $p^K$. However, this expression shows that $sp = s \psi(\phi(p)) \in \cone_{\SR Q}{S}$ and thus $p \in \cone_{\SR Q}{S}:S$ as required.
This proves $\SR Q \subseteq \SR P \subseteq \cone_{\SR Q}{S}:S$. Adjoining $S$ to every cone in this chain (using that $S \subseteq \SR P$ so that $\cone_{\SR P}{S} = \SR P$) and then saturating at $S$ proves $\cone_{\SR Q}{S}:S = \SR P:S$.

\item[\ref{thm:Cone:3} and \ref{thm:Cone:4}] are entirely analogous to \Cref{thm:Ideal}.
\qedhere
\end{paraenum}
\end{proof}

\begin{lemma} \label{thm:Monoid}
Let $\phi \colon S^{-1} A \iso \<S^{-1} \<A \revcolon \psi$ be a full birational isomorphism.
Consider a monoid $\<{\SR U} = \monoid{\<u_i : i \in I}$ with $\<S \subseteq \<{\SR U}$, where $I$ is some index set. Let $\SR U = A \cap \psi(\<S^{-1} \<{\SR U})$ be the corresponding monoid in $A$ and $\SR V = \monoid{\num \psi(\<u_i) : i \in I}$ the monoid of numerators. Then:
\begin{inparaenum}
\item\label{thm:Monoid:1}$S \subseteq \SR U$.
\item\label{thm:Monoid:2}The inclusions $\SR V \subseteq \SR U \subseteq S\SR V : S = \SR U : S$ hold.
\item\label{thm:Monoid:3}If~$\<{\SR U}$ is prime, then so is $\SR U$.
\item\label{thm:Monoid:4}If~$\<{\SR U}$ is saturated at $\<S$, then $\SR U$ is saturated at~$S$.
In this case $\SR U = S\SR V : S$.
\end{inparaenum}
\end{lemma}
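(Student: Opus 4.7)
The plan is to mirror the proofs of \Cref{thm:Ideal} and \Cref{thm:Cone}: extend $\<{\SR U}$ to a monoid in $\<S^{-1}\<A$, transport through the isomorphism $\psi$, and contract back to $A$, carrying the generators along as explicitly as possible. Part~(1) is immediate from fullness: $\phi(S) \subseteq \<S^{\pm}$, and the hypothesis $\<S \subseteq \<{\SR U}$ gives $\<S^{\pm} \subseteq \<S^{-1}\<{\SR U}$, so $\phi(S) \subseteq \<S^{-1}\<{\SR U}$, which contracts to $S \subseteq \SR U$. For part~(3), I apply \Cref{lemma:SatMonoid}: since $\<{\SR U}$ is prime with $\<S \subseteq \<{\SR U}$, the extension $\<S^{-1}\<{\SR U}$ is a prime monoid in $\<S^{-1}\<A$; primeness is then preserved by the isomorphism $\psi$ and by contraction along the embedding $A \hookrightarrow S^{-1}A$.

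Part~(2) carries the real work. The first inclusion $\SR V \subseteq \SR U$ follows by writing $\psi(\<u_i) = v_i/s_i$ with $v_i = \num\psi(\<u_i)$ and $s_i \in S$, so that $\phi(v_i) = \phi(s_i)\<u_i$ lies in $\<S^{\pm} \cdot \<{\SR U} \subseteq \<S^{-1}\<{\SR U}$ (again using $\<S \subseteq \<{\SR U}$). For the middle inclusion $\SR U \subseteq S\SR V : S$, I take $u \in \SR U$, write $\phi(u) = \<u'/\<s'$ with $\<u' = \prod_{i \in I'} \<u_i^{d_i} \in \<{\SR U}$ for a finite $I'$ and $\<s' \in \<S$, and apply $\psi$. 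Using fullness to write $\psi(\<s') = s''/s'''$ with $s'', s''' \in S$, the identity $u = \psi(\phi(u))$ becomes, after clearing denominators in the domain $A$,
\[
  u \cdot s'' \cdot \prod_i s_i^{d_i} = s''' \cdot \prod_i v_i^{d_i},
\]
exhibiting $s u \in S\SR V$ for $s = s'' \prod_i s_i^{d_i} \in S$; hence $u \in S\SR V : S$. The equality $S\SR V : S = \SR U : S$ is then formal: $S\SR V \subseteq \SR U$ (from part~(1) and the first inclusion) together with $\SR U \subseteq S\SR V : S$ and idempotence of saturation at $S$ collapse the two saturations to the same set.

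Part~(4) follows the template of part~(4) of \Cref{thm:Ideal}. Starting from $su' \in \SR U$ with $u' \in A$ and $s \in S$, the definition of $\SR U$ and fullness produce $\<s_1, \<s_2 \in \<S$ so that $\<s_1\<s_2\phi(u') \in \<{\SR U}$; the saturation hypothesis on $\<{\SR U}$ at $\<S$ then forces $\phi(u') \in \<{\SR U} \subseteq \<S^{-1}\<{\SR U}$, so $u' \in \SR U$, and saturating the chain of part~(2) collapses it to $\SR U = S\SR V : S$. The substantive obstacle I anticipate lies in part~(2): unlike for ideals, the numerator $v_i = \num\psi(\<u_i)$ is defined only up to multiplication by an element of $S$, and clearing the denominator $\psi(\<s')$ contributes an extra $S$-factor which cannot be absorbed into $\SR V$, since monoids admit no additive cancellation. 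This is exactly what forces the $S$-twist in $S\SR V : S$, in direct analogy with the residual factor $s_K \in S$ encountered in the proof of \Cref{thm:Cone}, part~(2).
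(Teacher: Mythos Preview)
Your proposal is correct and takes exactly the approach the paper intends: the paper's proof is the single sentence ``The proof of \Cref{thm:Cone} is easily modified to deal with a monoid instead of a cone,'' and your argument is precisely that modification spelled out in detail, with the $S$-twist in $S\SR V:S$ playing the same role as $\cone_{\SR Q}{S}:S$ did there. The only minor imprecision is in part~(4), where you write $\<s_1\<s_2\phi(u') \in \<{\SR U}$ although $\phi(u')$ need not lie in $\<A$; one further $\<S$-factor clears the remaining denominator, exactly as in the paper's own proof of \Cref{thm:Ideal}\,(4).
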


\begin{proof}
The proof of \Cref{thm:Cone} is easily modified to deal with a monoid instead of a cone.
\end{proof}

\subsection{Birational implicitization}

We now return to the more concrete setting of \Cref{sec: intro}. Consider a rational map $\alpha\colon \BB R^n \ratto \BB R^m$ with rational inverse~$\beta$. Being birational, the domain and codomain of $\alpha$ necessarily have the same dimension~$m=n$. This induces a specific type of birational isomorphism between the coordinate algebras $A = \BB R[x_1, \dots, x_n]$ of the codomain and $\<A = \BB R[\<x_1, \dots, \<x_n]$ of the domain of~$\alpha$.

\begin{definition}
A full birational isomorphism $\phi\colon S^{-1} A \iso \<S^{-1} \<A \revcolon\psi$ is \emph{affine} if $A = \BB R[x_1, \dots, x_n]$ and $\<A = \BB R[\<x_1, \dots, \<x_n]$ and $\phi$ and $\psi$ are $\BB R$-algebra isomorphisms.
\end{definition}

\begin{remark}
Some assumptions simplify in the affine setting:
\begin{paraenum}[label=(\roman*)]
\item
The localizing monoids $S$ and $\<S$ are finitely generated by the denominators of $\beta^*(\<x_i)$ and $\alpha^*(x_i)$, respectively. Since polynomial rings are UFDs, we may replace each of these generators by the (still finitely many) factors in their irreducible decompositions. This enlarges the monoid in a geometrically sound way: if a polynomial is supposed to vanish nowhere on a given space, then its factors must also vanish nowhere. By virtue of \Cref{lemma:ExtendFinite}, we can further enlarge these monoids to make the isomorphism~full without sacrificing finite generation. This is a prerequisite for computing the saturations from \Cref{thm:Ideal,thm:Cone,thm:Monoid} in practice.

\item
Let $\<{\CC M}$ be the semialgebraic set described by an ideal $\<{\SR I}$, a cone $\<{\SR P}$ and a monoid $\<{\SR U}$ in a polynomial ring and let $\<S$ be a localizing set. To apply \Cref{thm:Ideal,thm:Cone,thm:Monoid}, it suffices to make the following assumptions:
\begin{inparaenum}
\item $\<S \subseteq \<{\SR U}$,
\item $\<S \subseteq \<{\SR P}$,
\item $\<{\SR I} \cap \<{\SR U} = \emptyset$, and
\item $|\<{\SR P}| \subseteq \<{\SR I}$.
\end{inparaenum}
The~first two correspond to the non-trivial assumption that ``denominators'' in $\<S$ are positive on~$\<{\CC M}$. The~third assumption is necessary for $\<{\CC M}$ to be non-empty. As for the last assumption, any equation which is implied by the cone can also be enforced through the ideal without changing the set $\<{\CC M}$. These~assumptions guarantee that if $\<{\CC M} \not= \emptyset$, then $\<{\SR I} \cap \<S = \emptyset$ and $\<S \subseteq \<{\SR P}^+$.

\item
Moreover, if a generator $\<s$ of $\<S$ lies in $\<{\SR P}^- = -\<{\SR P} \setminus \<{\SR P}$, then it may be replaced by $-\<s \in \<{\SR P}^+$, generating a different monoid which however yields an isomorphic localization (and an equivalent inequation through $\<{\SR U}$). In particular, if $\<{\CC M}$ is \emph{connected} and $s$ vanishes nowhere (according to~$\<s \in \<{\SR U}$), then either $\<s$ or $-\<s$ is positive on this set and may be added to $\<{\SR P}$ without changing the geometry of~$\<{\CC M}$.
\end{paraenum}
\end{remark}

\begin{theorem} \label{thm:Biri}
Let $\phi\colon S^{-1} A \iso \<S^{-1} \<A \revcolon\psi$ be an affine birational isomorphism induced by the rational map $\alpha\colon \BB R^n \ratto \BB R^n$.
Suppose that $S = \monoid{s_\ell : \ell \in L}$ and $\<S = \monoid{\<s_\ell : \ell \in L}$. Let $I, J, K$ be further index sets and $\<f_i, \<p_j, \<u_k \in \BB R[\<x_1, \dots, \<x_n]$ be collections of polynomials, for all $i \in I$, $j \in J$, $k \in K$. They define a parameter space
\begin{equation*}
  \<{\CC M} = \Set{ \<x \in \BB R^n : \<f_i(\<x) = 0, \;\; \<p_j(\<x) \ge 0, \;\; \<u_k(\<x) \not= 0, \;\; \<s_\ell(x) > 0 }.
\end{equation*}
Suppose $\<{\CC M} \not= \emptyset$. With $f_i = \num \psi(\<f_i)$, $p_j = \num \psi(\<p_j)$ and $u_k = \num \psi(\<u_k)$, the image equals
\begin{equation}
  \label{eq:Markov} \tag{$*$}
  \alpha(\<{\CC M}) = \Set{ x \in \BB R^n : f_i(x) = 0, \;\; p_j(x) \ge 0, \;\; u_k(x) \not= 0, \;\; s_\ell(x) > 0 }.
\end{equation}
In particular, if $I, J, K, L$ are all finite, then the description of~$\alpha(\<{\CC M})$ is finite.
\end{theorem}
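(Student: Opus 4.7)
The plan is to prove the set equality \eqref{eq:Markov} by direct pointwise verification of both inclusions, exploiting the fact that $\phi$ and $\psi$ realize mutually inverse rational maps $\alpha$ and $\beta$. Although one could in principle route everything through \Cref{thm:Ideal,thm:Cone,thm:Monoid} and the Positivstellensatz, a direct argument is simpler here since we are describing the set itself rather than its vanishing ideal or non-negative cone.

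For the forward inclusion, I would fix $\<x \in \<{\CC M}$ and set $x \defas \alpha(\<x)$, which is well-defined because the denominators of $\alpha$ lie in $\<S$ and hence vanish nowhere on $\<{\CC M}$. The first key substep is to show $s_\ell(x) > 0$ for every $\ell$: by fullness of the isomorphism, $\phi(s_\ell) \in \<S^\pm$ can be written as a quotient $\<s/\<t$ of elements of $\<S$, both strictly positive at $\<x$, so $s_\ell(x) = \phi(s_\ell)(\<x) > 0$. Once this is in place, every pullback $\psi(\<g)$ with $\<g \in \{\<f_i, \<p_j, \<u_k\}$ can be evaluated at $x$, and the ring-level identity $\psi \circ \phi = \id$ applied to the coordinate functions $x_i$ yields $\beta(x) = \<x$. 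Writing $\psi(\<g) = g/s$ with $g = \num \psi(\<g)$ and $s \in S$, we have $g(x) = s(x) \cdot \<g(\<x)$, and since $s(x) > 0$ by the previous substep, each condition $\<f_i(\<x) = 0$, $\<p_j(\<x) \ge 0$, $\<u_k(\<x) \ne 0$ transfers to the analogous condition on $f_i(x)$, $p_j(x)$, $u_k(x)$.

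The backward inclusion will be entirely symmetric: given $x$ satisfying the right-hand side, the assumption $s_\ell(x) > 0$ ensures $\<x \defas \beta(x)$ is well-defined, fullness then transports strict positivity back to $\<s_\ell(\<x) > 0$, and the defining conditions of $\<{\CC M}$ at $\<x$ are recovered by the same pullback argument, this time through $\phi$ instead of $\psi$. The finiteness claim is immediate, as one new polynomial is produced per input.

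The main obstacle I anticipate is careful bookkeeping around denominators and the $\num$ operator. I must show that at every point under consideration, every denominator we want to evaluate is nonzero, and that the ambiguity of $\num$ up to multiplication by elements of $S$ does not affect the sign of $p_j(x)$ or the non-vanishing of $u_k(x)$. Both issues are resolved uniformly by the identity $\phi(S^\pm) = \<S^\pm$ from fullness, which forces every $s \in S$ to be strictly positive on either set, so that the sign of any fraction $g/s$ agrees with the sign of its numerator $g$.
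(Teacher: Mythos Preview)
Your proposal is correct and takes a genuinely different route from the paper. The paper proves the forward inclusion $\alpha(\<{\CC M}) \subseteq \CC M'$ essentially as you do, but for the reverse inclusion it argues indirectly: it shows that every polynomial $F$ which is strictly positive on $\alpha(\<{\CC M})$ is also strictly positive on $\CC M'$, by pulling back $\phi(F)$ to $\<{\CC M}$, invoking the Positivstellensatz (\Cref{cor:Positivstellen}) to obtain a certificate $\phi(F)\,\<p = \<f + \<q + \<u^2$, pushing that certificate through $\psi$, and then appealing to \Cref{thm:Ideal,thm:Cone,thm:Monoid} to clear denominators and land the pieces in the ideal, cone and monoid generated by the $f_i$, $p_j$, $u_k$, $s_\ell$.

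Your direct argument---apply $\beta$ to a point $x$ of the right-hand side, use fullness to see $\<s_\ell(\beta(x))>0$, and read off the remaining constraints from $\psi(\<g)(x)=g(x)/s(x)$ with $s(x)>0$---is more elementary and in fact does not even require the non-emptiness hypothesis on $\<{\CC M}$. What the paper's approach buys is that it transfers \emph{algebraic certificates}, not merely truth values: it shows explicitly how a Positivstellensatz representation on the parameter side becomes one on the model side via the supporting lemmas, which is the structural point the paper is trying to make and which feeds into \Cref{thm:VanishingIdeal}. For the bare set-theoretic statement of \Cref{thm:Biri}, however, your pointwise argument is entirely adequate and shorter.
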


\begin{proof}
Let $\CC M = \alpha(\<{\CC M})$ and denote the set on the right-hand side of~\eqref{eq:Markov} by~$\CC M'$. We first prove $\CC M \subseteq \CC M'$. By definition of $\<{\CC M}$, all rational functions in $\<S^\pm$ are positive on it. By fullness of the birational isomorphism, every $s \in S$ can be represented as $s = \psi(\sfrac{\<s}{\<s'})$ and for every $\<x \in \<{\CC M}$:
\[
  s(\alpha(\<x)) = \phi(s)(\<x) = \frac{\<s(\<x)}{\<s'(\<x)} > 0.
\]
Thus all polynomials in $S$ are positive on~$\CC M$. Now consider a polynomial $a = \num \psi(\<a)$. There exists $s \in S$ such that $a = s \psi(\<a)$. For any $\<x \in \<{\CC M}$, the pullback rewrites $a(\alpha(\<x)) = s(\alpha(\<x)) \cdot \<a(\<x)$. Hence $a$ vanishes, is non-negative or non-vanishing on $\CC M$ if and only if $\<a$ has the respective property with respect to~$\<{\CC M}$. This argument shows that every point in $\CC M$ satisfies the constraints defining $\CC M'$, hence~$\CC M \subseteq \CC M'$.

To prove the reverse inclusion, recall that $\CC M' \subseteq \CC M$ if and only if every positive polynomial on $\CC M$ is also positive on $\CC M'$. The non-obvious direction follows by assuming that there exists $x_0 \in \CC M' \setminus \CC M$ and considering the polynomial $\lVert x-x_0\rVert^2$ which is positive on $\CC M$ but has a zero on~$\CC M'$. Now consider a polynomial $F$ which is positive on $\CC M$. By definition, this means that $\phi(F)$ is positive on $\<{\CC M}$. The Positivstellensatz (\Cref{cor:Positivstellen}) shows that there exist $\<f \in \<{\SR I}$, $\<p, \<q \in \<{\SR P}$ and $\<u \in \<{\SR U}$ such that $\phi(F) \<p = \<f + \<q + \<u^2$, where $\<{\SR I} = \ideal_{\<A}{f_I}$, $\<{\SR P} = \cone_{\SOS{\<A}}{\<p_J, \<s_L}$ and $\<{\SR U} = \monoid{\<u_K, \<s_L}$. Applying $\psi$ to this equation yields
\begin{equation}
  \label{eq:Fpsi}
  F\psi(\<p) = \psi(\<f) + \psi(\<q) + \psi(\<u)^2.
\end{equation}
Multiplying through by any common denominator $s \in S$ of the fractions in this equation gives $s\psi(\<p) \in A$ but also $s\psi(\<p) = \psi(\frac{\<s}{\<s'} \<p) \in \psi(\<S^{-1} \<{\SR P})$ by the fullness of the birational isomorphism and $\<S \subseteq \<{\SR P}$. Thus $s\psi(\<p) \in \SR P \defas A \cap \psi(\<S^{-1} \<{\SR P})$. Since $\<{\CC M}$ is non-empty, we have $\<S \subseteq \<{\SR P}^+$ and thus \Cref{thm:Cone} applies to compute $\SR P : S = \cone_{\SOS{A}}{p_J, s_L} : S$.
For~a~large enough $s \in S$ (with respect to the divisibility ordering), this even proves that $s\psi(\<p) \in \cone_{\SOS{A}}{p_J, s_L}$. The~same arguments apply to the ideal and monoid, invoking \Cref{thm:Ideal,thm:Monoid}, respectively. Continuing with \eqref{eq:Fpsi}, this~yields
\begin{equation}
  \label{eq:Fp}
  Fp = f + q + u^2,
\end{equation}
with $f \in \ideal_{A}{f_I}$, $p, q \in \cone_{\SOS{A}}{p_J, s_L}$ and $u \in \monoid{u_K, s_L}$. But then $F$ is clearly positive on~$\CC M'$. This~completes the proof of $\CC M' \subseteq \CC M$ and hence of their equality.
\end{proof}

We now shift the priorities slightly. In order to use \Cref{thm:Cone} to transfer inequalities, \Cref{thm:Biri} assumes that the generators of the localizing monoid are positive on the parameter space. If inequalities are not of interest, then this assumption can be weakened. If we assume, on the other hand, that the vanishing ideal of the parameter space is known and prime, we obtain the vanishing ideal of the model up to a saturation, instead of just a set of equations which cut it out geometrically.

\begin{theorem} \label{thm:VanishingIdeal}
Let $\phi\colon S^{-1} A \iso \<S^{-1} \<A \revcolon\psi$ be an affine birational isomorphism induced by the rational map $\alpha\colon \BB R^n \ratto \BB R^n$.
Let $\<{\CC M}$ be an irreducible algebraic variety and $\<{\SR I} = \ideal_{\<A}{\<f_i : i \in I}$ its vanishing ideal. If $\<{\SR I} \cap \<S = \emptyset$, then the vanishing ideal of $\CC M = \alpha(\<{\CC M})$ is~$\ideal_{A}{\num \psi(\<f_i) : i \in I} : S$.
\end{theorem}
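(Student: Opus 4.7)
The plan is to reduce the theorem to \Cref{thm:Ideal} and then identify the resulting ideal with the vanishing ideal $I(\CC M)$ via a short geometric argument. Since $\<{\CC M}$ is an irreducible algebraic variety, its vanishing ideal $\<{\SR I}$ is a real prime ideal, so the hypothesis $\<{\SR I} \cap \<S = \emptyset$ makes $\<{\SR I}$ saturated at $\<S$ by \Cref{lemma:SatIdeal}(3). Setting $\SR J = \ideal_A{\num \psi(\<f_i) : i \in I}$ and $\SR I = A \cap \psi(\<S^{-1} \<{\SR I})$, \Cref{thm:Ideal}(4) gives $\SR I = \SR J : S$. The theorem thus reduces to proving $\SR I = I(\CC M)$.

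For this, I would work with a canonical representation of the pullback: writing $\alpha = (\<a_1/\<s_1, \dots, \<a_n/\<s_n)$ and clearing denominators, every $g \in A$ has $\phi(g) = \<a/\<t$ with $\<t \in \<S$ a power of the product of the $\<s_i$, so that $\<t$ vanishes exactly on the complement of $\mathrm{dom}(\alpha)$. For $\SR I \subseteq I(\CC M)$, the membership $\phi(g) \in \<S^{-1} \<{\SR I}$ yields a second representation $\phi(g) = \<f/\<s$ with $\<f \in \<{\SR I}$ and $\<s \in \<S$; cross-multiplying gives $\<a\<s = \<f\<t \in \<{\SR I}$, and primality of $\<{\SR I}$ with $\<s \notin \<{\SR I}$ forces $\<a \in \<{\SR I}$. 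Then $g(\alpha(\<x)) = \<a(\<x)/\<t(\<x) = 0$ for every $\<x \in \<{\CC M} \cap \mathrm{dom}(\alpha)$, so $g$ vanishes on $\CC M$. Conversely, for $g \in I(\CC M)$, the relation $\<a(\<x) = \<t(\<x) g(\alpha(\<x))$ shows that $\<a$ vanishes on $\<{\CC M} \cap \mathrm{dom}(\alpha)$. Multiplying by $\<t$ (which vanishes on $\<{\CC M} \setminus \mathrm{dom}(\alpha)$), we see that $\<t\<a$ vanishes on all of $\<{\CC M}$, giving $\<t\<a \in \<{\SR I}$, and primality then gives $\<a \in \<{\SR I}$. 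Hence $\phi(g) = \<a/\<t \in \<S^{-1} \<{\SR I}$ and $g \in \SR I$.

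The main obstacle in both directions of this last inclusion is the primality step: one must promote vanishing of $\<a$ on the Zariski-open subset $\<{\CC M} \cap \mathrm{dom}(\alpha)$ to ideal membership $\<a \in \<{\SR I}$. This is achieved by multiplying with a single element of $\<S$ that vanishes on $\<{\CC M}\setminus\mathrm{dom}(\alpha)$ — available because $\alpha$ has finitely many coordinate functions whose denominators all lie in $\<S$ — and then invoking primality of $\<{\SR I}$ together with the disjointness $\<{\SR I} \cap \<S = \emptyset$. Irreducibility of $\<{\CC M}$ is essential here; without it the saturation $\SR J : S$ need not coincide with $I(\CC M)$.
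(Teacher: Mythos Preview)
Your proposal is correct and follows essentially the same approach as the paper: identify $\SR I = A \cap \psi(\<S^{-1}\<{\SR I})$ with the vanishing ideal of $\CC M$ via the pullback $\alpha^*$, and then invoke \Cref{thm:Ideal} to rewrite $\SR I$ as $\SR J : S$. The only difference is emphasis: the paper asserts the equivalence ``$f$ vanishes on $\CC M$ iff $\alpha^*(f) \in \<S^{-1}\<{\SR I}$'' in one line, whereas you unpack it carefully by tracking the domain of $\alpha$ and using primality of $\<{\SR I}$ to pass from vanishing on the dense open $\<{\CC M} \cap \mathrm{dom}(\alpha)$ to membership in $\<{\SR I}$; and you cite part~(4) of \Cref{thm:Ideal} directly (via saturatedness of $\<{\SR I}$) where the paper routes through part~(3) (primality transfers, hence $\SR I$ is saturated at $S$).
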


\begin{proof}
We claim that $\SR I = A \cap \psi(\<S^{-1} \<{\SR I})$ is the vanishing ideal of~$\CC M$. Indeed, a polynomial $f \in A$ vanishes on $\CC M$ if and only if $\alpha^*(f) \in \<S^{-1} \<{\SR I}$, where $\alpha^*\colon A \to \<S^{-1} \<A$ is the pullback of the parametrization. In the birational isomorphism, $\alpha^*$ is canonically extended to $\phi\colon S^{-1} A \to \<S^{-1} \<A$ so that $\alpha^*(f) = \phi(f)$. Hence the previous condition can be equivalently written as $\phi(f) \in \<S^{-1} \<{\SR I}$. Applying the inverse $\psi$ gives $f \in A \cap \psi(\<S^{-1} \<{\SR I}) = \SR I$.
The statement then follows from \Cref{thm:Ideal}: since $\<{\SR I}$ is a real prime ideal, so is $\SR I$, hence it is saturated at~$S$, which proves $\SR I = \SR I : S = \ideal_{A}{\num \psi(\<f_i) : i \in I} : S$.
\end{proof}

\section{Ambirational statistical models}
\label{sec:Ambirational}

We call a parametrized statistical model $\CC M = \alpha(\<{\CC M})$ \emph{ambirational} if its parametrization map~$\alpha$ induces an affine birational isomorphism between the ambient affine spaces in which its parameter space $\<{\CC M}$ and the model $\CC M$ itself are embedded, such that the elements of the localizing monoid $\<S$ vanish nowhere on~$\<{\CC M}$. In particular, $\CC M$ is globally rationally identifiable. This section identifies a collection of commonly used statistical models as ambirational models. Our methods open up the possibility of studying submodels of these classical statistical models obtained by imposing expert knowledge in the form of additional equations and inequalities on the parameters. %

\subsection{Linear concentration models}

Consider the affine space $\Sym_n$ of symmetric $n \times n$ matrices. The subset of positive definite matrices $\PD_n$ is a full-dimensional convex cone. Any linear subspace $\CC L \subseteq \Sym_n$ defines a \emph{spectrahedron} $\CC L \cap \PD_n$. These sets appear as feasible sets in semidefinite optimization. We may view their elements as the covariance matrices of multivariate Gaussian distributions with mean zero and then $\CC L \cap \PD_n$ becomes a \emph{linear covariance model}; see~\cite{LinearCovariance} for prominent examples in this class. There is nothing to be done for these models from the point of view of implicitization. Instead, we view $\CC L \cap \PD_n$ as a set of concentration matrices of mean-zero Gaussian distributions and study the associated model in covariance coordinates $\CC M(\CC L) \defas \Set{ \Sigma \in \PD_n : \Sigma^{-1} \in \CC L \cap \PD_n }$.

This model is ambirational. To see this, let $\alpha\colon \Sym_n \ratto \Sym_n$ be the matrix inversion map $K \mapsto \Sigma = K^{-1}$. This is a rational function by the well-known relation $|K| \Sigma = \Adj K$ between the inverse and the adjugate. The parameter identification map $\beta = \alpha^{-1}$ is given by matrix inversion as well.
On the algebraic level, the pullback of matrix inversion gives an isomorphism of localized polynomial rings $\phi = \alpha^* \colon |\Sigma|^{-1} \BB R[\Sigma] \to |K|^{-1} \BB R[K]$, where the notation $|\Sigma|^{-1} \BB R[\Sigma]$ is short for $\monoid{|\Sigma|}^{-1} \BB R[\Sigma]$. Since $\phi(|\Sigma|) = |K|^{-1}$, this is a full and hence affine birational isomorphism.

The parameter space $\CC L \cap \PD_n$ is described by linear polynomial equations $\<f_i$ generating the (vanishing) ideal of $\CC L$ and the positive definiteness constraints. The latter may be expressed by making the principal minors $|K_I|$, for all $I \subseteq [n]$, non-negative and non-vanishing. These constraints naturally include $|K| > 0$, as required by \Cref{thm:Biri}. Note that $\num \psi(|K_I|) = |\Sigma_{I^\co}|$ where $I^\co = [n] \setminus I$, so the positive definiteness constraints on $K$ get transformed into positive definiteness of~$\Sigma$. As a subset of $\PD_n$, the model $\CC M$ is described by the equations $\num \psi(\<f_i) = 0$.

\begin{example}[Undirected graphical models] \label{ex:Undirected}
Let $G = (V, E)$ be an undirected graph on $|V|=n$ vertices. With the linear space $\CC L(E) = \Set{ K = (\kappa_{ij}) \in \Sym_n : \text{$\kappa_{ij} = 0$ for $ij \not\in E$} }$, we obtain the usual undirected Gaussian graphical model $\CC M = \CC M(G)$. According to \Cref{thm:Biri}, this model is cut out of $\PD_n$ by the vanishing of the cofactors $\num \psi(\kappa_{ij}) = (-1)^{i+j}|\Sigma_{ij|V\setminus ij}|$, for $ij \not\in E$, where $\Sigma_{ij|C}$ is the submatrix with rows $\{i\} \cup C$ and columns $\{j\} \cup C$.
These polynomial equations are equivalent to the (maximal) conditional independence statements $\CI{i,j|V \setminus ij}$, for $ij \not\in E$. This recovers the classical \emph{pairwise Markov property} of undirected graphical models \cite[Section~13.1]{Sullivant}. Moreover, since the equations defining $\CC L(E)$ generate a real prime ideal~$\ideal_{\BB R[K]}{\kappa_{ij} : ij \not\in E}$ of the same dimension as $\CC L(E) \cap \PD_n$, \Cref{thm:VanishingIdeal} shows that the vanishing ideal of $\CC M$ equals the conditional independence ideal $\ideal_{\BB R[\Sigma]}{|\Sigma_{ij|V\setminus ij}| : ij \not\in E}$ up to saturation at~$|\Sigma|$.
\end{example}

\pagebreak %

\begin{example}[RCON models] \label{ex:RCON}
Another related instance of linear concentration models are the \emph{restricted concentration models}, also known as \emph{colored undirected Gaussian graphical models}. Let~$G = (V, E)$ be an undirected graph, and $c\colon V \sqcup E \to C$ be a map which assigns a ``color'' from a finite set~$C$ to each vertex and edge, such that $c|_V$ and $c|_E$ have disjoint images.
The~RCON model for $(G, c)$ is then the set of covariance matrices $\Sigma$ reciprocal to the linear space of concentration matrices $K$ satisfying the equations on the left:
\begin{equation}
\label{eq:RCONimpl}
\begin{aligned}
  \kappa_{ij} &= 0, \text{if $ij \not\in E$} &&\quad\to\quad& |\Sigma_{ij|V\setminus ij}| &= 0, \\
  \kappa_{ii} &= \kappa_{jj}, \text{if $c(i) = c(j)$} &&\quad\to\quad& |\Sigma_{V \setminus i}| &= |\Sigma_{V \setminus j}|, \\
  \kappa_{ij} &= \kappa_{kl}, \text{if $c(ij) = c(kl)$} &&\quad\to\quad& |\Sigma_{ij|V \setminus ij}| &= |\Sigma_{kl|V \setminus kl}|.
\end{aligned}
\end{equation}
As in \Cref{ex:Undirected}, \Cref{thm:Biri} shows that the model is cut out of $\PD_n$ by the corresponding equations on the right, which are the numerators of the equations on the left under matrix inversion. Moreover, these polynomials generate the vanishing ideal up to saturation at~$|\Sigma|$.

\noindent\begin{sidebyside}
\begin{sidebox}{0.75\linewidth}
For a concrete example, consider the colored graph to the right (in~which all vertices have distinct~colors).
Collecting the equations from \eqref{eq:RCONimpl} corresponding to the missing edges and coloring conditions produces a (non-radical) ideal $\SR J \subseteq \BB R[\Sigma]$ of dimension~12 with 6 associated primes. Among them is a unique one which does not contain~$|\Sigma|$, namely
\end{sidebox}\hfill
\begin{sidebox}{0.25\linewidth}
\centering%
\noindent%
\begin{tikzpicture}[thick, scale=0.6]
  \node[circle, draw, inner sep=1pt, minimum width=1pt] (1) at (0,0)  {$1$};
  \node[circle, draw, inner sep=1pt, minimum width=1pt] (2) at (2,0)  {$2$};
  \node[circle, draw, inner sep=1pt, minimum width=1pt] (3) at (2,-2) {$3$};
  \node[circle, draw, inner sep=1pt, minimum width=1pt] (4) at (0,-2) {$4$};
  \node[circle, draw, inner sep=1pt, minimum width=1pt] (5) at (4,-1)  {$5$};

  \draw[red!40, ultra thick]  (1) -- (2);
  \draw[red!40, ultra thick]  (2) -- (3);
  \draw[red!40, ultra thick]  (3) -- (4);
  \draw[red!40, ultra thick]  (4) -- (1);
  \draw[blue!40, ultra thick] (2) -- (5);
  \draw[blue!40, ultra thick] (3) -- (5);
\end{tikzpicture}%
\end{sidebox}

\begingroup
\scriptsize
\begin{gather*}
  \big\langle\,
\sigma_{14} \sigma_{23} - \sigma_{12} \sigma_{34},\,
\sigma_{15} \sigma_{24} - \sigma_{14} \sigma_{25},\,
\sigma_{15} \sigma_{34} - \sigma_{14} \sigma_{35},\,
\sigma_{15} \sigma_{23} - \sigma_{12} \sigma_{35},\,
\sigma_{15} \sigma_{44} - \sigma_{14} \sigma_{45},\,
\sigma_{25} \sigma_{44} - \sigma_{24} \sigma_{45},\, \\
\sigma_{25} \sigma_{34} - \sigma_{24} \sigma_{35},\,
\sigma_{35} \sigma_{44} - \sigma_{34} \sigma_{45},\,
\sigma_{25} \sigma_{33} + \sigma_{14} \sigma_{35} - \sigma_{23} \sigma_{35} - \sigma_{13} \sigma_{45},\,
\sigma_{24} \sigma_{33} + \sigma_{14} \sigma_{34} - \sigma_{23} \sigma_{34} - \sigma_{13} \sigma_{44},\, \\
\sigma_{23} \sigma_{25} - \sigma_{22} \sigma_{35} - \sigma_{24} \sigma_{35} + \sigma_{23} \sigma_{45},\,
\sigma_{13} \sigma_{25} - \sigma_{12} \sigma_{35} - \sigma_{14} \sigma_{35} + \sigma_{13} \sigma_{45},\,
\sigma_{23} \sigma_{24} - \sigma_{22} \sigma_{34} - \sigma_{24} \sigma_{34} + \sigma_{23} \sigma_{44},\, \\
\sigma_{13} \sigma_{24} - \sigma_{12} \sigma_{34} - \sigma_{14} \sigma_{34} + \sigma_{13} \sigma_{44},\,
\sigma_{15} \sigma_{22} - \sigma_{12} \sigma_{25} + \sigma_{14} \sigma_{25} - \sigma_{12} \sigma_{45},\,
\sigma_{14} \sigma_{22} - \sigma_{12} \sigma_{24} + \sigma_{14} \sigma_{24} - \sigma_{12} \sigma_{44},\, \\
\sigma_{13} \sigma_{22} - \sigma_{12} \sigma_{23} + \sigma_{14} \sigma_{34} - \sigma_{13} \sigma_{44},\,
\sigma_{13} \sigma_{15} + \sigma_{15} \sigma_{33} - \sigma_{11} \sigma_{35} - \sigma_{13} \sigma_{35},\,
\sigma_{12} \sigma_{15} - \sigma_{11} \sigma_{25} - \sigma_{14} \sigma_{35} + \sigma_{13} \sigma_{45},\, \\
\sigma_{13} \sigma_{14} + \sigma_{14} \sigma_{33} - \sigma_{11} \sigma_{34} - \sigma_{13} \sigma_{34},\,
\sigma_{12} \sigma_{14} - \sigma_{11} \sigma_{24} - \sigma_{14} \sigma_{34} + \sigma_{13} \sigma_{44},\,
\sigma_{12} \sigma_{13} - \sigma_{11} \sigma_{23} - \sigma_{13} \sigma_{23} + \sigma_{12} \sigma_{33}
  \,\big\rangle.
\end{gather*}
\endgroup
\end{sidebyside}

\noindent%
This prime ideal of the expected dimension 7 is the vanishing ideal of the RCON~model.
\end{example}

\begin{figure}
\begin{subfigure}[t]{0.45\linewidth}
\centering
\includegraphics[width=0.85\linewidth]{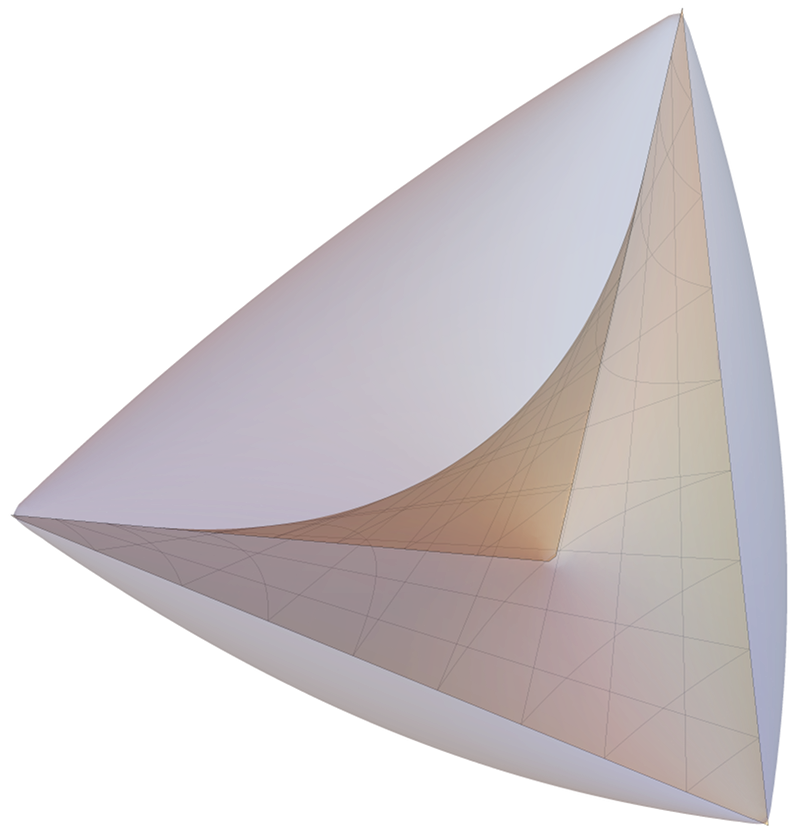}
\caption{The Gaussian conditional independence model for~$\CI{1,2|3}$ inside of the elliptope.}
\label{fig:MTP2:a}
\end{subfigure}
\hfill
\begin{subfigure}[t]{0.45\linewidth}
\centering
\includegraphics[width=0.85\linewidth]{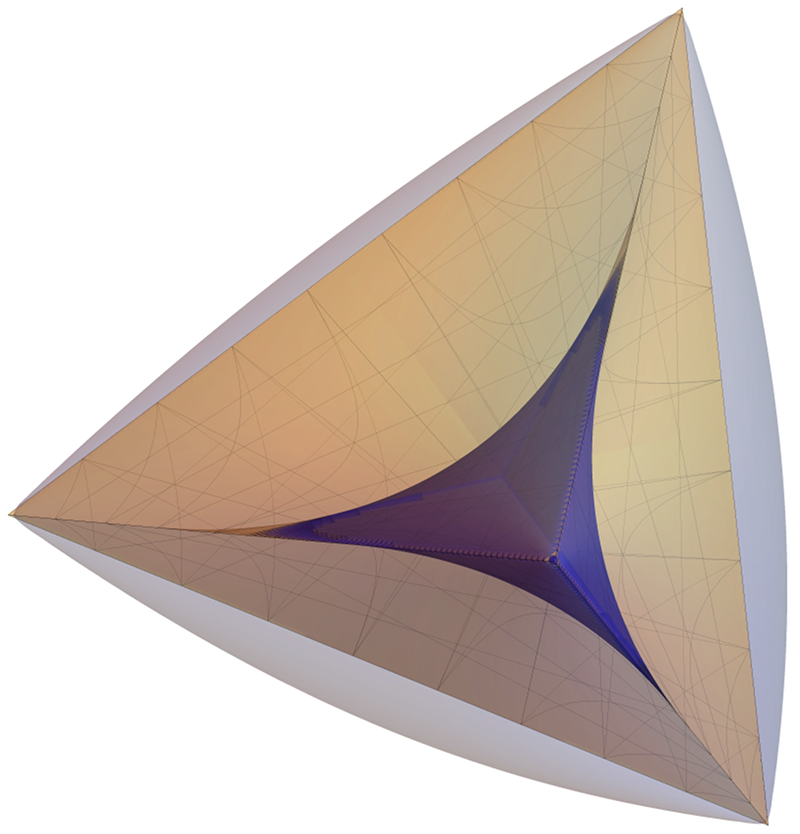}
\caption{The $\RM{MTP}_2$ distributions with normed variances form the dark arrow-shaped region.}
\label{fig:MTP2:b}
\end{subfigure}
\hfill
\caption{A linear and a polyhedral concentration model.}
\end{figure}

\begin{example}[Polyhedral constraints and $\RM{MTP}_2$] \label{ex:MTP2}
Our techniques transparently accommodate inequalities constraining the parameter space (in addition to the positivity of denominators). A natural step up from linear concentration models is to consider \emph{polyhedral concentration models} featuring linear inequalities. For example, given a linear space $\CC L$ in concentration coordinates, consider the polyhedral cone $\CC P = \Set{ K \in \CC L : \kappa_{ij} \le 0 }$. The parameter space $\CC P \cap \PD_n$ consists of the $\RM{M}$-matrices from $\CC L \cap \PD_n$. Correspondingly, the polyhedral concentration model $\CC M(\CC P) = \Set{ \Sigma \in \PD_n : \Sigma^{-1} \in \CC P \cap \PD_n }$ consists of the $\RM{MTP}_2$ distributions in~$\CC M(\CC L)$.

The covariance matrices of 3-variate Gaussians with normalized variance form a 3-dimensional convex body known as the \emph{elliptope}. The three undirected graphical models below are given by the parameter constraints $\kappa_{12} = 0$, $\kappa_{13} = 0$ and $\kappa_{23} = 0$, respectively, which map to the non-linear constraints $\sigma_{12} = \sigma_{13}\sigma_{23}$, $\sigma_{13} = \sigma_{12}\sigma_{23}$ and $\sigma_{23} = \sigma_{12}\sigma_{13}$, respectively, in the model space. The~three hypersurfaces only differ by a permutation of the ambient space; one of them is pictured in \Cref{fig:MTP2:a}.
\begin{center}
\begin{tikzpicture}[thick, scale=0.5]
  \node[circle, draw, inner sep=1pt, minimum width=1pt] (1) at (0,0)  {$1$};
  \node[circle, draw, inner sep=1pt, minimum width=1pt] (2) at (0,-2) {$2$};
  \node[circle, draw, inner sep=1pt, minimum width=1pt] (3) at (2,-1) {$3$};

  \draw[ultra thick] (1) -- (3);
  \draw[ultra thick] (2) -- (3);
\end{tikzpicture}
\hspace{2em}
\begin{tikzpicture}[thick, scale=0.5]
  \node[circle, draw, inner sep=1pt, minimum width=1pt] (1) at (0,0)  {$1$};
  \node[circle, draw, inner sep=1pt, minimum width=1pt] (2) at (0,-2) {$2$};
  \node[circle, draw, inner sep=1pt, minimum width=1pt] (3) at (2,-1) {$3$};

  \draw[ultra thick] (1) -- (2);
  \draw[ultra thick] (2) -- (3);
\end{tikzpicture}
\hspace{2em}
\begin{tikzpicture}[thick, scale=0.5]
  \node[circle, draw, inner sep=1pt, minimum width=1pt] (1) at (0,0)  {$1$};
  \node[circle, draw, inner sep=1pt, minimum width=1pt] (2) at (0,-2) {$2$};
  \node[circle, draw, inner sep=1pt, minimum width=1pt] (3) at (2,-1) {$3$};

  \draw[ultra thick] (1) -- (2);
  \draw[ultra thick] (1) -- (3);
\end{tikzpicture}
\end{center}
The inequality $\kappa_{ij} \le 0$ similarly maps to $(-1)^{i+j}\sigma_{ij} \le (-1)^{i+j}\sigma_{ik}\sigma_{jk}$. The set of $\RM{MTP}_2$ distributions inside of the elliptope is thus found on the negative side of each of the three hypersurfaces. It forms an arrow-shaped cell in the hypersurface arrangement depicted in \Cref{fig:MTP2:b}.
\end{example}

\subsection{Linear structural equation models}

Let $D_n$ be the space of $n \times n$ diagonal matrices and $T_n$ the space of $n \times n$ strictly upper-triangular matrices. For given $\Omega \in D_n \cap \PD_n$ and $\Lambda \in T_n$, a~random vector $X$ is defined as the unique solution to the \emph{linear structural equations}
\begin{equation}
  \label{eq:SEM}
  X = \Lambda X + \eps,
\end{equation}
where $\eps \sim \CC N(0, \Omega)$ is a vector of mutually independent normally distributed noise. It can be worked out (see, e.g., \cite[Proposition~13.2.12]{Sullivant}) that $X$ is itself normally distributed with mean $0$ and covariance matrix $\Sigma = \alpha(\Omega, \Lambda) \defas (I_n - \Lambda)^{-\T} \,\Omega\, (I_n - \Lambda)^{-1}$. Since $|I_n-\Lambda| = 1$, this is a polynomial~map $\alpha\colon D_n \times T_n \to \Sym_n$. By \cite[Lemma~3.2]{ColoredDAG} applied to the complete topologically ordered DAG on $V = \Set{1, \dots, n}$, its inverse is given by
\begin{align}
  \label{eq:DAGrecover}
  \omega_{ii}  = \frac{|\Sigma_{[i]}|}{|\Sigma_{[i-1]}|}, \quad
  \lambda_{ij} = \frac{|\Sigma_{ij|[j-1]\setminus i}|}{|\Sigma_{[j-1]}|}, \;\text{for $i < j$}.
\end{align}
This~gives rise to a birational isomorphism $S^{-1} \BB R[\Sigma] \iso \BB R[\Omega, \Lambda]$ where $S$ is the monoid generated by the leading principal minors of~$\Sigma$ which appear as denominators in \eqref{eq:DAGrecover}. The other ring has no localizing monoid attached because the map $\alpha$ is polynomial, so clearly this isomorphism is not full. Localizing at $\monoid{\omega_{ii} : i \in V}$ on the one side and at $\monoid{|\Sigma_{[i]}| : i \in V}$ on the other extends this to a full birational isomorphism.
Note that \Cref{thm:Biri} only applies to parameter spaces inside $D_n \times T_n$ on which the localizing monoid is strictly positive, i.e., $\omega_{ii} > 0$ for all $i \in V$. This,~in turn, places the model into~$\PD_n$, which is consistent with the statistical setup in~\eqref{eq:SEM}.

\begin{example}[Bayesian networks] \label{ex:Bayesian}
Let $G = (V,E)$ be a directed acyclic graph (DAG) on $|V|=n$ vertices which are topologically ordered. This defines a linear subspace $\CC L(E) \subseteq T_n$ by setting $\lambda_{ij} = 0$ for $ij \not\in E$. The topological ordering of $G$ ensures that all non-zero coefficients $\lambda_{ij}$, for $ij \in E$, are in the upper triangle of~$\Lambda$. This defines the parameter space $\CC S(G) = (D_n \cap \PD_n) \times \CC L(E)$ and the resulting \emph{Gaussian Bayesian network model}~$\CC M(G) = \alpha(\CC S(G))$. Combining \Cref{thm:Biri} with the formula for $\lambda_{ij}$ in \eqref{eq:DAGrecover} shows that this model is cut out of $\PD_n$ by the equations
\[
  |\Sigma_{ij|[j-1]\setminus i}| = 0, \text{for $i < j$ and $ij \not\in E$}.
\]
These vanishings correspond to the conditional independence statements $\CI{i,j|[j-1]\setminus i}$ which therefore characterize the model geometrically. \Cref{thm:VanishingIdeal} shows, moreover, that the vanishing ideal of the model is generated by these conditional independence polynomials up to saturation at the leading principal minors of~$\Sigma$.

Markov properties of DAG models are well-understood. Using the intersection and composition properties of Gaussian conditional independence and ideas from \cite{MatusGaussian}, one can prove that the vanishing ideal is equivalently generated by the polynomials corresponding to the \emph{pairwise directed local Markov property} $\CI{i,j|\pa(j)}$, for $ij \not\in E$, up to saturation at the principal minors of~$\Sigma$; cf.~\cite[Definition~13.1.9]{Sullivant}.
This proves a conjecture of Sullivant \cite{SullivantGaussianBN} which had been resolved in \cite{RoozbehaniPolyanskiy}. With more work, as carried out in \cite[Theorem~4.11]{ColoredDAG}, the saturating set can be reduced to only \emph{parental} principal minors $|\Sigma_{\pa(j)}|$, for $j \in V$.
\end{example}

\begin{example}[Confounding]
The influence of latent confounders in a Bayesian network is modeled using mixed graphs $G = (V,D,B)$ where $V = [n]$ is the vertex set, $D$ is a set of directed edges and $B$ a set of bidirected edges. A bidirected edge is drawn between vertices which have a common confounder; see~\cite[Section~14.2]{Sullivant} and \cite{AlgebraicProblems}. The parametrization $\Sigma = (I-\Lambda)^{-\T} \, \Omega \, (I-\Lambda)^{-1}$ is analogous to Bayesian networks, however, $\Omega$ need not be diagonal. Each bidirected edge $i\leftrightarrow j \in B$ couples the error terms $\eps_i$ and $\eps_j$ in the structural equation model~\eqref{eq:SEM} by allowing $\omega_{ij}$ to be non-zero.

With $\binom{n+1}{2} + \binom{n}{2}$ parameters mapping into an $\binom{n+1}{2}$-dimensional space, the general mixed graph model (with a complete DAG and all bidirected edges) cannot be identifiable and thus not ambirational; one has to trade directed edges for bidirected ones. A characterization of which graphical structures enjoy global rational identifiability is given in \cite[Theorem~16.2.1]{Sullivant}. We~examine only a particular case in which $2 \rightarrow 4$ is replaced by the bidirected edge~$2 \leftrightarrow 4$:
\begin{center}
\begin{tikzpicture}[thick, scale=0.9]
  \tikzset{>={Stealth[length=3mm, round]}}
  \node[circle, draw, inner sep=1pt, minimum width=1pt] (1) at (0,0) {$1$};
  \node[circle, draw, inner sep=1pt, minimum width=1pt] (2) at (2,0) {$2$};
  \node[circle, draw, inner sep=1pt, minimum width=1pt] (3) at (4,0) {$3$};
  \node[circle, draw, inner sep=1pt, minimum width=1pt] (4) at (6,0) {$4$};

  \draw (1) edge [->, very thick] (2);
  \draw (1) edge [->, very thick, bend right=30] (3);
  \draw (1) edge [->, very thick, bend right=40] (4);
  \draw (2) edge [->, very thick] (3);
  \draw (2) edge [<->, red!60, very thick, bend left=50] (4);
  \draw (3) edge [->, very thick] (4);
\end{tikzpicture}
\end{center}
This graphical model is globally identifiable:
\begin{gather*}
\omega_{11} = |\Sigma_{1}|, \;
\omega_{22} = \frac{|\Sigma_{12}|}{|\Sigma_{1}|}, \;
\omega_{33} = \frac{|\Sigma_{123}|}{|\Sigma_{12}|}, \;
\omega_{44} = \frac{|\Sigma_{1234}| }{|\Sigma_{123}|} + \frac{|\Sigma_{12}| |\Sigma_{24|13}|^2}{|\Sigma_{1}| |\Sigma_{123}|^2}, \;
\omega_{24} = \frac{|\Sigma_{12}| |\Sigma_{24|13}|}{|\Sigma_{1}| |\Sigma_{123}|}, \\
\lambda_{12} = \frac{|\Sigma_{12|\emptyset}|}{|\Sigma_{1}|}, \;
\lambda_{13} = \frac{|\Sigma_{13|2}|}{|\Sigma_{12}|}, \;
\lambda_{14} = \frac{|\Sigma_{14|23}|}{|\Sigma_{123}|} + \frac{|\Sigma_{12|\emptyset}| |\Sigma_{24|13}|}{|\Sigma_{1}| |\Sigma_{123}|}, \;
\lambda_{23} = \frac{|\Sigma_{23|1}|}{|\Sigma_{12}|}, \;
\lambda_{34} = \frac{|\Sigma_{34|12}|}{|\Sigma_{123}|}.
\end{gather*}
The positive definiteness of $\Omega$ imposes 4 inequality conditions on the image:
\begin{equation*}
\begin{aligned}
  \omega_{11} &> 0 &&\quad\to\quad& |\Sigma_{1}| &> 0, \\
  \omega_{11} \omega_{22} &> 0 &&\quad\to\quad& |\Sigma_{12}| &> 0, \\
  \omega_{11}\omega_{22}\omega_{33} &> 0 &&\quad\to\quad& |\Sigma_{123}| &> 0, \\
  \omega_{11}\omega_{22}\omega_{33}\omega_{44} - \omega_{11}\omega_{33}\omega_{24}^2 &> 0 &&\quad\to\quad& |\Sigma_{1234}| &> 0.
\end{aligned}
\end{equation*}
Hence, when localizing at the leading principal minors of $\Omega$ on the one side and at those of~$\Sigma$ on the other side, this mixed graphical model induces an affine birational isomorphism, allowing us to study its submodels. Removing the edge $1 \rightarrow 4$ results in the well-known \emph{Verma graph} \cite[Example~1.2]{Nested}. Its model is cut out of $\PD_4$ by the \emph{Verma constraint} which is not of conditional independence type. In \cite{Nested}, this constraint is derived using nested determinants. Using birational implicitization, we can explain its occurence as the numerator of the image of $\lambda_{14} = 0$ under the parameter identification map:
$\lambda_{14} = 0 \;\to\; |\Sigma_{1}| |\Sigma_{14|23}| + |\Sigma_{12|\emptyset}| |\Sigma_{24|13}| = 0$.
\end{example}

\begin{remark}
Latent variable models, in general, are overparametrized and non-identifiable and our techniques do not apply. For these cases, recent work by Cummings and Hollering \cite{Benjoe} exploits a multigrading compatible with the parametrization map $\alpha$ to break the implicitization problem into smaller subproblems which can be solved via linear algebra. In contrast to general elimination-theoretic methods, their multigraded implicitization can quickly find generating sets for low-degree pieces of the vanishing ideal which has led to impressive speedups in model distinguishability benchmarks in algebraic~phylogenetics.
\end{remark}

\subsection{Interventions and colored DAGs}

One of the main goals in graphical modeling is to infer which graph (from a given class) is most likely to produce a given data sample. For~Bayesian networks, this task is obstructed by the phenomenon of \emph{model equivalence}, namely that two distinct DAGs $G \not= H$ may define the same model $\CC M(G) = \CC M(H)$. The field of \emph{causality} deals with the problem of selecting the right DAG after its model equivalence class has been determined~\cite{pearl2009causality}.
Two solutions to this structural identifiability problem have resulted in generalizations of DAG models that share in the algebraic structure of~$\CC M(G)$: \emph{colored Gaussian DAG models} \cite{peters2014identifiability,wu2023partial,ColoredDAG} and \emph{interventional DAG models} \cite{yang2018characterizing}.

\begin{example}[Colored DAG models] \label{ex:ColoredDAG}
Colored DAG models are a natural extension of Bayesian networks parallel to \Cref{ex:RCON}. Let $G = (V,E)$ be a DAG, for simplicity $V = [n]$ labeled according to a topological ordering of the DAG, and $c: V \sqcup E \to C$ a coloring function with the images of $c|_V$ and $c|_E$ disjoint. The coloring introduces additional equations $\omega_{ii} = \omega_{jj}$ for $c(i) = c(j)$ and $\lambda_{ij} = \lambda_{kl}$ for $c(ij) = c(kl)$. It is known that vertex-colored DAGs and edge-colored DAGs (i.e., those where $c|_E$, respectively $c|_V$, is injective) refine their underlying DAG's model equivalence class. This is open in general for mixed colorings; see~\cite{wu2023partial} and \cite[Section~5]{ColoredDAG}.

Using \Cref{thm:VanishingIdeal} and \eqref{eq:DAGrecover} it is again easy to write down a generating set for the vanishing ideal of the resulting model $\CC M(G,c)$ up to saturation at the leading principal minors:
\begin{align}
  \label{eq:DAGCI}  |\Sigma_{ij|[j-1]}| &= 0, \; \text{for $i < j$ and $ij \not\in E$}, \\
  \label{eq:Vcolor} |\Sigma_{[i]}| \, |\Sigma_{[j-1]}| &= |\Sigma_{[j]}| \, |\Sigma_{[i-1]}|, \; \text{if $c(i) = c(j)$}, \\
  \label{eq:Ecolor} |\Sigma_{ij|[j-1]\setminus i}| \, |\Sigma_{[l-1]}| &= |\Sigma_{kl|[l-1]\setminus k}| \, |\Sigma_{[j-1]}|, \; \text{if $c(ij) = c(kl)$}.
\end{align}

\begin{figure}[t]
\begin{subfigure}[t]{0.43\linewidth}
\centering
\includegraphics[width=0.85\linewidth]{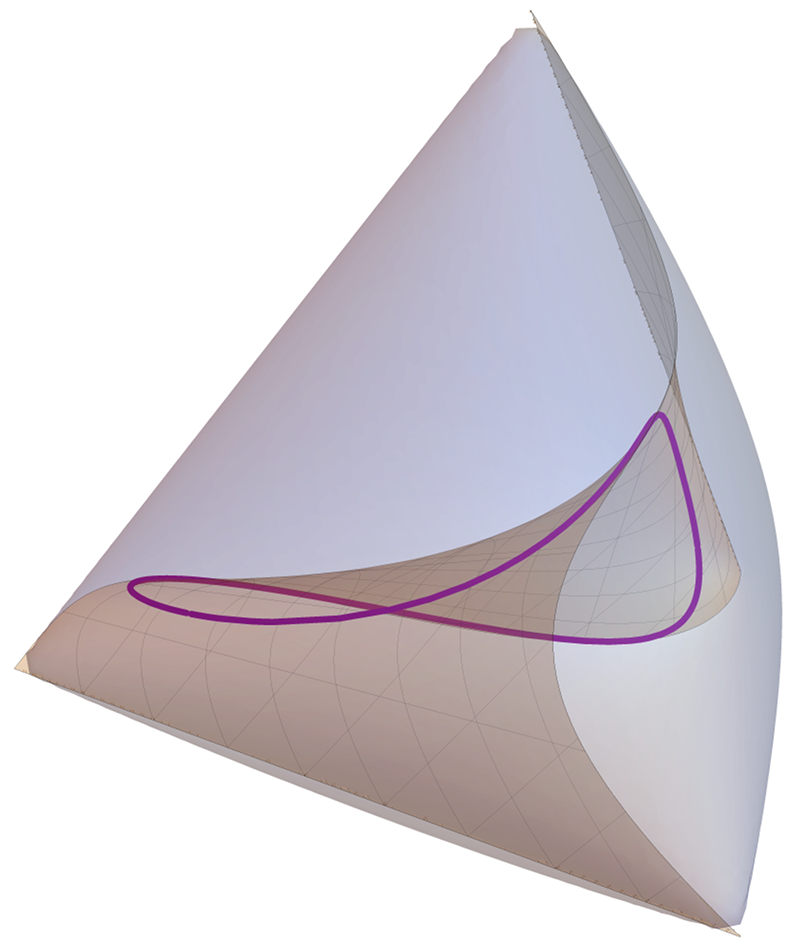}
\caption{The~distributions satisfying the ball constraint~\eqref{eq:Ball} lie inside of the purple circle.}
\label{fig:ColoredBall}
\end{subfigure}
\hfill
\begin{subfigure}[t]{0.50\linewidth}
\centering
\includegraphics[width=0.9\linewidth]{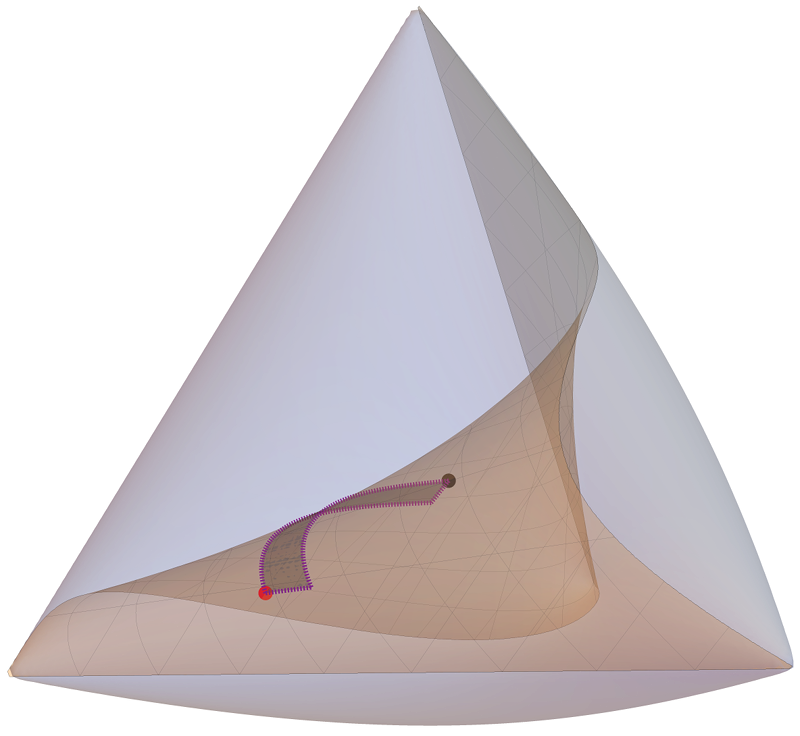}
\caption{A successful intervention on the red distribution which decreases both structural coefficients in absolute value lands in the shaded strip. The black dot at the opposite end is the standard normal~$\CC N(0,I_3)$.}
\label{fig:Intervention}
\end{subfigure}
\caption{Semialgebraic submodels of the colored DAG model from \Cref{ex:ColoredDAG}.} %
\end{figure}

\noindent\begin{sidebyside}
\begin{sidebox}{0.85\linewidth}
Let $(G, c)$ be the colored DAG to the right and consider the submodel of $\CC M(G, c)$ given by the additional ball constraint $2\lambda_{12}^2 + \lambda_{23}^2 < 1$. Since $\lambda_{12} = \lambda_{13}$, this restricts the structural (i.e., edge) coefficients to the open unit ball.
We obtain the following implicit description (omitting $\PD$ constraints):
\begin{gather}
  \label{eq:Color} \sigma_{12} (1-\sigma_{12}^2) = \sigma_{13} - \sigma_{12} \sigma_{23}, \\
  \label{eq:Ball}  \sigma_{12}^2 + 2 (\sigma_{13} - \sigma_{12} \sigma_{23})^2 + (\sigma_{23} - \sigma_{12}\sigma_{13})^2 < 1.
\end{gather}
\vspace{-0.5\baselineskip}
\end{sidebox}\hfill
\begin{sidebox}[t]{0.15\linewidth}
\centering%
\noindent%
\begin{tikzpicture}[thick, scale=0.6]
  \tikzset{>={Stealth[length=3mm, round]}}
  \node[circle, draw, inner sep=1pt, minimum width=1pt] (1) at (0,0)  {$1$};
  \node[circle, draw, inner sep=1pt, minimum width=1pt] (2) at (0,-2) {$2$};
  \node[circle, draw, inner sep=1pt, minimum width=1pt] (3) at (2,-1) {$3$};

  \draw[->, very thick, cyan] (1) -- (2);
  \draw[->, very thick, cyan] (1) -- (3);
  \draw[->, very thick]       (2) -- (3);
\end{tikzpicture}
\end{sidebox}
The colored DAG model and its submodel are depicted in~\Cref{fig:ColoredBall}. \qedhere
\end{sidebyside}
\end{example}

The classic approach to distinguishing a single DAG from within its model equivalence class is to use a mixture of \emph{observational} data and \emph{interventional} (or \emph{experimental}) data; for instance, the two data sets produced from a randomized controlled trial. In interventional models, we assume that there is a data-generating distribution~$X$, from which the observational data is drawn.

Consider a DAG $G = (V,E)$ with its parameter space $\CC S(G)$ as defined in \Cref{ex:Bayesian}, and fix a set $I \subseteq V$ of \emph{intervention targets}. Let $\Sigma = \alpha(\Omega, \Lambda)$, with $(\Omega, \Lambda) \in \CC S(G)$, be the observational distribution. The~\emph{interventional distribution} is produced by perturbing (``experimenting on'') the conditional factors ${X_i \mid X_{\pa(i)}}$, for $i \in I$, in some way. For Gaussian DAG models, this may be modeled by setting $\Sigma' = \alpha(\Omega', \Lambda')$ with additional parameters $(\Omega', \Lambda') \in \CC S(G)$. To enforce that only~$I$ is being intervened on, these parameters are subject to $\lambda_{ji}' = \lambda_{ji}$ and $\omega_{ii}' = \omega_{ii}$ whenever~$i \not\in I$. Hence, the effects of the perturbation at the nodes $I$ is captured in the difference between the observational parameters and the interventional parameters. Note that the choice of intervention targets $I = \emptyset$ returns the observational DAG~model.

If the parametrization maps $\alpha_0$ and $\alpha_1$ induce affine birational isomorphisms, then so does their direct product $\alpha_0 \times \alpha_1$ on the cartesian product of their domains and codomains.
From a sequence of intervention targets $\CC I = (I_0 = \emptyset, I_1, \ldots, I_m)$, we obtain a parameter space $\CC S(G)^m / {\sim}$ where the relation ${\sim}$ imposes the equalities among non-targeted parameters between successive interventions. This parameter space is again a linear space $\CC L(\CC I)$ in $\PD_n^m \times T_n^m$. The resulting \emph{interventional Gaussian DAG model} $\CC M(G, \CC I)$ is the image of $\CC L(\CC I)$ under an $m$-fold direct product of the parametrization maps~$\alpha$ for standard Bayesian networks.

\begin{remark}
Given a sequence of intervention targets $\CC I$, the $\CC I$-model equivalence classes refine the DAG model equivalence classes. When $\CC I = (I_\emptyset, (I_i = \{i\} : i \in V))$, each $\CC I$-model equivalence class is a singleton, which solves the structural identifiability problem when such interventional data is available. Although in practice performing such experiments may be costly or unethical.
\end{remark}

\begin{remark}
Colored Gaussian DAG models generalize interventional DAG models with intervention targets $\CC I = (I_0, \dots, I_m)$ by taking the graph $H$ as $m+1$ disjoint copies of $G$ and the coloring $c$ that colors the vertices and edges of $H$ according to the linear space $\CC L(\CC I)$ defined~above.
\end{remark}

\begin{example}[Error variance interventions] \label{ex:ErrorIntervention}
A popular type of interventional model is to assume that an intervention on nodes $I \subseteq V$ affects the conditional factors $X_i \mid X_{\pa(i)}$, for $i \in I$, only through their error variances~$\omega_{ii}$.
Fix a DAG $G = (V,E)$, an observational distribution~$X \sim \CC N(0, \Sigma)$, for $\Sigma \in \CC M(G)$, and intervention targets~$I$. Let $X' \sim \CC N(0, \Sigma')$ be the interventional distribution. Recasting the intervention as a colored DAG model and the resulting relations \labelcref{eq:DAGCI,eq:Vcolor,eq:Ecolor} on (the covariance matrix of) the pair $(X, X')$ in statistical terminology, the local Markov property of the DAG implies
\begin{align}
  \label{eq:DAGCIStat} \tag{$\text{\ref*{eq:DAGCI}}'$} \CI{X_i,X_j|X_{\pa(j)}} \,\,\,&\!\!\!\text{ and } \CI{X_i',X_j'|X_{\pa(j)}'}, \; \text{for $i < j$ and $ij \not\in E$}, \\
  \label{eq:VcolorStat} \tag{$\text{\ref*{eq:Vcolor}}'$} \Var[X_i \mid X_{\pa(i)}] &= \Var[X_i' \mid X_{\pa(i)}'], \; \text{for $i \not\in I$}, \\
  \label{eq:EcolorStat} \tag{$\text{\ref*{eq:Ecolor}}'$} \frac{\Cov[X_i,X_j \mid X_{\pa(j)\setminus i}]}{\Var[X_i \mid X_{\pa(j)\setminus i}]} &= \frac{\Cov[X_i',X_j' \mid X_{\pa(j)\setminus i}']}{\Var[X_i' \mid X_{\pa(j)\setminus i}']}, \; \text{for $ij \in E$}.
\end{align}
In terms of conditional densities, this gives
\[
f(x_i \mid x_{\pa_G(i)}) = f^\prime(x_i \mid x_{\pa_G(i)}) \quad \mbox{for all $i \notin I$}
\]
where $f(x)$ and $f^\prime(x)$ denote the joint densities of $X$ and $X^\prime$, respectively. These are the recognizable \emph{causal mechanism invariance constraints} commonly used in causality to define interventional models \cite{tian2001causal}.
\end{example}

\Cref{thm:Biri} also allows us to relax the classic interventional model studied in the previous example. This allows practitioners to build expert knowledge into the specification of the parameter space and the intervention and still recover constraints for testing model membership against data.

\begin{example}[Monotone interventions]
\label{ex:MonotoneIntervention}
When an intervention is performed, it may often be assumed that the effect of the perturbation is to either increase or decrease entropy in the targeted causal mechanisms. For instance, a perfect intervention (see \cite{hauser2015jointly} for definition) in $X_i \mid X_{\pa(i)}$ may decrease the error variance to zero and specify a certain outcome of this conditional factor. Alternatively, an intervention may lead to unexpected outcomes, in which case a reasonable assumption is that the error variances of targeted nodes should increase from their baseline. In the context of the model considered in Example~\ref{ex:ErrorIntervention}, the latter can be modeled by adding in the linear inequality constraints $\omega_{ii}' \geq \omega_{ii}$ for all $i\in I$. By \eqref{eq:DAGrecover} and \Cref{thm:Biri}, this corresponds to non-linear inequalities among products of principal minors: $|\Sigma_{[i]}|\,|\Sigma_{[i-1]}'| \geq |\Sigma_{[i-1]}|\,|\Sigma_{[i]}'|$. %

\Cref{fig:Intervention} shows an observational distribution (in red) on the colored DAG model from \Cref{ex:ColoredDAG}. The shaded area contains all points on the model which have lower structural coefficients $\lambda_{12}'$ and $\lambda_{23}'$ in absolute value. A successful intervention reducing the causal effects must generate a distribution in this strip. In the limit, the causal mechanisms are removed and the standard normal distribution (the black dot) is obtained.
\end{example}

\subsection{Staged tree models}
\label{subsec:StagedTree}

Staged trees parametrize statistical models via probability tree diagrams. As such they lend themselves well to the specification of constraints among conditional probabilities. Unlike all previous examples, which were multivariate Gaussians, staged tree models describe discrete random variables.

Let $T = (V, E)$ be a rooted tree in which every vertex $v \in V$ has either no or at least two outgoing edges. For each $v \in V$, the set of outgoing edges is denoted $E(v) \subseteq E$. To each edge $e\in E$, we assign a parameter $\theta_e$ assuming values in the interval $(0,1)$. At each vertex, this gives a vector of outgoing parameters $\theta_v = (\theta_e : e \in E(v))$ which we suppose sum to~$1$. Hence $\theta_v$ ranges in the open probability simplex $\Delta^\circ(E(v))$ of discrete distributions attaining $|E(v)|$ outcomes. The \emph{probability tree model} associated to~$T$ has parameter space $\Theta = \bigtimes_{v \in V} \Delta^\circ(E(v))$. Let $L(T)$ be the root-to-leaf paths in~$T$ and to each $\lambda \in L(T)$ associate the set $E(\lambda)$ of edges on~$\lambda$. Then the model is the image of the parametrization map
\begin{align*}
  \alpha_T \colon \Theta &\to \Delta^\circ(L(T)), \\
  \theta &\mathrel{\Arrow[|->]} \left(p_\lambda = \prod_{e \in E(\lambda)} \theta_e : \lambda \in L(T) \right).
\end{align*}
A random variable in this model $\CC M(T)$ may be simulated as follows: starting at the root node, pick one of the outgoing edges according to their probability~$\theta_e$ and continue until a leaf is reached. Usually, the levels of the tree are associated with random variables, so that a leaf corresponds to the outcome of a random vector; see \Cref{fig: equivalent staged trees}. The edge parameters represent conditional probabilities given the event that its ancestors were visited.

Given $p \in \CC M(T)$, the marginal probability of arriving at the event $v$ may be computed by summing over all coordinates $p_\lambda$ of $p$ where $\lambda$ is a root-to-leaf path passing through $v$:
\[
  p_{[v]} = \sum_{v \in \lambda \in L(T)} p_\lambda.
\]
As noted by Duarte and G\"orgen in \cite[Lemma~1]{duarte2020equations}, the edge parameter $\theta_{vw}$ is recovered as $\theta_{vw} = \sfrac{p_{[w]}}{p_{[v]}}$. Clearly, localizing $\BB R[\theta_e : e \in E]$ at $\<S = \monoid{\theta_e : e \in E}$ on the one side and $\BB R[p_\lambda : \lambda \in L(T)]$ at $S = \monoid{p_{[v]} : v \in V}$ on the other yields a full birational isomorphism. This shows that probability tree models are ambirational.

A \emph{staged tree} is a rooted tree $T$ together with coloring $c: V \to C$ of its vertices, such that $\theta_v = \theta_w$ whenever $c(v) = c(w)$; this necessitates that $|E(v)| = |E(w)|$. The fibers $c^{-1}(k)$, for $k \in c(V)$, are referred to as \emph{stages}. The staged tree model is a submodel of the probability tree model of $T$ defined by the linear constraints
\begin{align}
  \notag \theta_e &> 0, \; \text{for all $e \in E$}, \\
  \notag \sum_{e \in E(v)} \theta_e &= 1, \; \text{for all $v\in V$ non-leaf}, \\
  \label{eq:Staged} \theta_v &= \theta_w \; \text{whenever $c(v) = c(w)$}.
\end{align}
The first two conditions are inherited from the probability tree model and place the image into the probability simplex $\Delta^\circ(L(T))$ (cf.~\cite{duarte2020equations}), while the last condition gives the model-defining constraints
\begin{equation}
  \label{eqn: model invariants}
  p_{[v]}p_{[w']} = p_{[v']}p_{[w]} \; \text{whenever $c(v) = c(v')$ and $vw, v'w' \in E$}.
\end{equation}
Letting $\SR J$ be the ideal generated by these constraints, it follows directly from \Cref{thm:VanishingIdeal} that $\SR J : S$ is the vanishing ideal of the model.
This result was first observed by Duarte and Görgen in \cite{duarte2020equations}, where they call the constraints~\eqref{eqn: model invariants} the \emph{model invariants} of $(T,c)$. %

\begin{remark}
\label{rem: discrete DAGs}
Any discrete DAG model $\CC M(G)$ may be represented via a staged tree, in the sense that given $G$, there exists some $(T,c)$ such that $\CC M(G) = \CC M(T, c)$; see \cite[Proposition~2.2]{duarte2023new} for a construction. Combining this observation with the \Cref{ex:Bayesian}, we see that both discrete and Gaussian DAG models fall into the general family of ambirational statistical models.
\end{remark}

\begin{figure}[t]
    \begin{subfigure}[b]{0.3\textwidth}
    \centering
    \begin{tikzpicture}[thick,xscale=0.15,yscale=0.25]
        \tikzset{>={Stealth[length=3mm, round]}}
		\node[draw, fill=black!0, inner sep=2pt, rounded corners, minimum width=2pt] (w3) at (6,15)  {\scriptsize 1111};
		\node[draw, fill=black!0, inner sep=2pt, rounded corners, minimum width=2pt] (w4) at (6,13.5) {\scriptsize 1110};
		\node[draw, fill=black!0, inner sep=2pt, rounded corners, minimum width=2pt] (w5) at (6,12) {\scriptsize 1101};
		\node[draw, fill=black!0, inner sep=2pt, rounded corners, minimum width=2pt] (w6) at (6,10.5) {\scriptsize 1100};
		\node[draw, fill=black!0, inner sep=2pt, rounded corners, minimum width=2pt] (v3) at (6,9)  {\scriptsize 1011};
		\node[draw, fill=black!0, inner sep=2pt, rounded corners, minimum width=2pt] (v4) at (6,7.5) {\scriptsize 1010};
		\node[draw, fill=black!0, inner sep=2pt, rounded corners, minimum width=2pt] (v5) at (6,6) {\scriptsize 1001};
		\node[draw, fill=black!0, inner sep=2pt, rounded corners, minimum width=2pt] (v6) at (6,4.5) {\scriptsize 1000};
		\node[draw, fill=black!0, inner sep=2pt, rounded corners, minimum width=2pt] (w3i) at (6,3)  {\scriptsize 0111};
		\node[draw, fill=black!0, inner sep=2pt, rounded corners, minimum width=2pt] (w4i) at (6,1.5) {\scriptsize 0110};
		\node[draw, fill=black!0, inner sep=2pt, rounded corners, minimum width=2pt] (w5i) at (6,0) {\scriptsize 0101};
		\node[draw, fill=black!0, inner sep=2pt, rounded corners, minimum width=2pt] (w6i) at (6,-1.5) {\scriptsize 0100};
		\node[draw, fill=black!0, inner sep=2pt, rounded corners, minimum width=2pt] (v3i) at (6,-3)  {\scriptsize 0011};
		\node[draw, fill=black!0, inner sep=2pt, rounded corners, minimum width=2pt] (v4i) at (6,-4.5) {\scriptsize 0010};
		\node[draw, fill=black!0, inner sep=2pt, rounded corners, minimum width=2pt] (v5i) at (6,-6) {\scriptsize 0001};
		\node[draw, fill=black!0, inner sep=2pt, rounded corners, minimum width=2pt] (v6i) at (6,-7.5) {\scriptsize 0000};

		\node[draw, fill=blue!0, inner sep=2pt, rounded corners, minimum width=2pt] (w1) at (-2,14.25) {\scriptsize 111};
		\node[draw, fill=cyan!60, inner sep=2pt, rounded corners, minimum width=2pt] (w2) at (-2,11.25) {\scriptsize 110};
		\node[draw, fill=orange!0, inner sep=2pt, rounded corners, minimum width=2pt] (v1) at (-2,8.25) {\scriptsize 101};
		\node[draw, fill=cyan!60, inner sep=2pt, rounded corners, minimum width=2pt] (v2) at (-2,5.25) {\scriptsize 100};
		\node[draw, fill=orange!60, inner sep=2pt, rounded corners, minimum width=2pt] (w1i) at (-2,2.25) {\scriptsize 011};
		\node[draw, fill=cyan!60, inner sep=2pt, rounded corners, minimum width=2pt] (w2i) at (-2,-0.75) {\scriptsize 010};
		\node[draw, fill=orange!60, inner sep=2pt, rounded corners, minimum width=2pt] (v1i) at (-2,-3.75) {\scriptsize 001};
		\node[draw, fill=cyan!60, inner sep=2pt, rounded corners, minimum width=2pt] (v2i) at (-2,-6.75) {\scriptsize 000};

		\node[draw, fill=green!60, inner sep=2pt, rounded corners, minimum width=2pt] (w) at (-8,12.75) {\scriptsize 11};
		\node[draw, fill=green!60, inner sep=2pt, rounded corners, minimum width=2pt] (v) at (-8,6.75) {\scriptsize 10};
		\node[draw, fill=cyan!0, inner sep=2pt, rounded corners, minimum width=2pt] (wi) at (-8,0.75) {\scriptsize 01};
		\node[draw, fill=cyan!0, inner sep=2pt, rounded corners, minimum width=2pt] (vi) at (-8,-5.25) {\scriptsize 00};

		\node[draw, fill=yellow!0, inner sep=2pt, rounded corners, minimum width=2pt] (r) at (-14,9.75) {\scriptsize 1};
		\node[draw, fill=yellow!0, inner sep=2pt, rounded corners, minimum width=2pt] (ri) at (-14,-1.75) {\scriptsize 0};

		\node[draw, fill=black!0, inner sep=2pt, rounded corners, minimum width=2pt] (I) at (-20,3) {\scriptsize r};

		\draw[->]   (I) --    (r) ;
		\draw[->]   (I) --   (ri) ;

		\draw[->]   (r) --   (w) ;
		\draw[->]   (r) --   (v) ;

		\draw[->]   (w) --  (w1) ;
		\draw[->]   (w) --  (w2) ;

		\draw[->]   (w1) --   (w3) ;
		\draw[->]   (w1) --   (w4) ;
		\draw[->]   (w2) --  (w5) ;
		\draw[->]   (w2) --  (w6) ;

		\draw[->]   (v) --  (v1) ;
		\draw[->]   (v) --  (v2) ;

		\draw[->]   (v1) --  (v3) ;
		\draw[->]   (v1) --  (v4) ;
		\draw[->]   (v2) --  (v5) ;
		\draw[->]   (v2) --  (v6) ;

		\draw[->]   (ri) --   (wi) ;
		\draw[->]   (ri) -- (vi) ;

		\draw[->]   (wi) --  (w1i) ;
		\draw[->]   (wi) --  (w2i) ;

		\draw[->]   (w1i) --  (w3i) ;
		\draw[->]   (w1i) -- (w4i) ;
		\draw[->]   (w2i) --  (w5i) ;
		\draw[->]   (w2i) --  (w6i) ;

		\draw[->]   (vi) --  (v1i) ;
		\draw[->]   (vi) --  (v2i) ;

		\draw[->]   (v1i) --  (v3i) ;
		\draw[->]   (v1i) -- (v4i) ;
		\draw[->]   (v2i) -- (v5i) ;
		\draw[->]   (v2i) --  (v6i) ;

		\node at (-17.5,-9) {$X_2$} ;
		\node at (-11.5,-9) {$X_1$} ;
		\node at (-5,-9) {$X_3$} ;
		\node at (2,-9) {$X_4$} ;

	\end{tikzpicture}
	\caption{}\label{fig:cstreeA}
    \end{subfigure}
    \hfill
    \begin{subfigure}[b]{0.3\textwidth}
    \centering
    \begin{tikzpicture}[thick,xscale=0.15,yscale=0.25]
      \tikzset{>={Stealth[length=3mm, round]}}
		\node[draw, fill=black!0, inner sep=2pt, rounded corners, minimum width=2pt] (w3) at (6,15)  {\scriptsize 1111};
		\node[draw, fill=black!0, inner sep=2pt, rounded corners, minimum width=2pt] (w4) at (6,13.5) {\scriptsize 1110};
		\node[draw, fill=black!0, inner sep=2pt, rounded corners, minimum width=2pt] (w5) at (6,12) {\scriptsize 1101};
		\node[draw, fill=black!0, inner sep=2pt, rounded corners, minimum width=2pt] (w6) at (6,10.5) {\scriptsize 1100};
		\node[draw, fill=black!0, inner sep=2pt, rounded corners, minimum width=2pt] (v3) at (6,9)  {\scriptsize 1011};
		\node[draw, fill=black!0, inner sep=2pt, rounded corners, minimum width=2pt] (v4) at (6,7.5) {\scriptsize 1010};
		\node[draw, fill=black!0, inner sep=2pt, rounded corners, minimum width=2pt] (v5) at (6,6) {\scriptsize 1001};
		\node[draw, fill=black!0, inner sep=2pt, rounded corners, minimum width=2pt] (v6) at (6,4.5) {\scriptsize 1000};
		\node[draw, fill=black!0, inner sep=2pt, rounded corners, minimum width=2pt] (w3i) at (6,3)  {\scriptsize 0111};
		\node[draw, fill=black!0, inner sep=2pt, rounded corners, minimum width=2pt] (w4i) at (6,1.5) {\scriptsize 0110};
		\node[draw, fill=black!0, inner sep=2pt, rounded corners, minimum width=2pt] (w5i) at (6,0) {\scriptsize 0101};
		\node[draw, fill=black!0, inner sep=2pt, rounded corners, minimum width=2pt] (w6i) at (6,-1.5) {\scriptsize 0100};
		\node[draw, fill=black!0, inner sep=2pt, rounded corners, minimum width=2pt] (v3i) at (6,-3)  {\scriptsize 0011};
		\node[draw, fill=black!0, inner sep=2pt, rounded corners, minimum width=2pt] (v4i) at (6,-4.5) {\scriptsize 0010};
		\node[draw, fill=black!0, inner sep=2pt, rounded corners, minimum width=2pt] (v5i) at (6,-6) {\scriptsize 0001};
		\node[draw, fill=black!0, inner sep=2pt, rounded corners, minimum width=2pt] (v6i) at (6,-7.5) {\scriptsize 0000};

		\node[draw, fill=blue!0, inner sep=2pt, rounded corners, minimum width=2pt] (w1) at (-2,14.25) {\scriptsize 111};
		\node[draw, fill=blue!00, inner sep=2pt, rounded corners, minimum width=2pt] (w2) at (-2,11.25) {\scriptsize 110};
		\node[draw, fill=orange!60, inner sep=2pt, rounded corners, minimum width=2pt] (v1) at (-2,8.25) {\scriptsize 101};
		\node[draw, fill=orange!60, inner sep=2pt, rounded corners, minimum width=2pt] (v2) at (-2,5.25) {\scriptsize 100};
		\node[draw, fill=cyan!60, inner sep=2pt, rounded corners, minimum width=2pt] (w1i) at (-2,2.25) {\scriptsize 011};
		\node[draw, fill=cyan!60, inner sep=2pt, rounded corners, minimum width=2pt] (w2i) at (-2,-0.75) {\scriptsize 010};
		\node[draw, fill=cyan!60, inner sep=2pt, rounded corners, minimum width=2pt] (v1i) at (-2,-3.75) {\scriptsize 001};
		\node[draw, fill=cyan!60, inner sep=2pt, rounded corners, minimum width=2pt] (v2i) at (-2,-6.75) {\scriptsize 000};

		\node[draw, fill=green!60, inner sep=2pt, rounded corners, minimum width=2pt] (w) at (-8,12.75) {\scriptsize 11};
		\node[draw, fill=cyan!0, inner sep=2pt, rounded corners, minimum width=2pt] (v) at (-8,6.75) {\scriptsize 10};
		\node[draw, fill=green!60, inner sep=2pt, rounded corners, minimum width=2pt] (wi) at (-8,0.75) {\scriptsize 01};
		\node[draw, fill=cyan!0, inner sep=2pt, rounded corners, minimum width=2pt] (vi) at (-8,-5.25) {\scriptsize 00};

		\node[draw, fill=yellow!0, inner sep=2pt, rounded corners, minimum width=2pt] (r) at (-14,9.75) {\scriptsize 1};
		\node[draw, fill=yellow!0, inner sep=2pt, rounded corners, minimum width=2pt] (ri) at (-14,-1.75) {\scriptsize 0};

		\node[draw, fill=black!0, inner sep=2pt, rounded corners, minimum width=2pt] (I) at (-20,3) {\scriptsize r};

		\draw[->]   (I) --    (r) ;
		\draw[->]   (I) --   (ri) ;

		\draw[->]   (r) --   (w) ;
		\draw[->]   (r) --   (v) ;

		\draw[->]   (w) --  (w1) ;
		\draw[->]   (w) --  (w2) ;

		\draw[->]   (w1) --   (w3) ;
		\draw[->]   (w1) --   (w4) ;
		\draw[->]   (w2) --  (w5) ;
		\draw[->]   (w2) --  (w6) ;

		\draw[->]   (v) --  (v1) ;
		\draw[->]   (v) --  (v2) ;

		\draw[->]   (v1) --  (v3) ;
		\draw[->]   (v1) --  (v4) ;
		\draw[->]   (v2) --  (v5) ;
		\draw[->]   (v2) --  (v6) ;

		\draw[->]   (ri) --   (wi) ;
		\draw[->]   (ri) -- (vi) ;

		\draw[->]   (wi) --  (w1i) ;
		\draw[->]   (wi) --  (w2i) ;

		\draw[->]   (w1i) --  (w3i) ;
		\draw[->]   (w1i) -- (w4i) ;
		\draw[->]   (w2i) --  (w5i) ;
		\draw[->]   (w2i) --  (w6i) ;

		\draw[->]   (vi) --  (v1i) ;
		\draw[->]   (vi) --  (v2i) ;

		\draw[->]   (v1i) --  (v3i) ;
		\draw[->]   (v1i) -- (v4i) ;
		\draw[->]   (v2i) -- (v5i) ;
		\draw[->]   (v2i) --  (v6i) ;

		\node at (-17.5,-9) {$X_3$} ;
		\node at (-11.5,-9) {$X_2$} ;
		\node at (-5,-9) {$X_1$} ;
		\node at (2,-9) {$X_4$} ;

	\end{tikzpicture}
	\caption{}\label{fig:cstreeB}
    \end{subfigure}
    \hfill
    \begin{subfigure}[b]{0.3\textwidth}
    \centering
    \begin{tikzpicture}[thick,xscale=0.15,yscale=0.25]
      \tikzset{>={Stealth[length=3mm, round]}}
		\node[draw, fill=black!0, inner sep=2pt, rounded corners, minimum width=2pt] (w3) at (6,15)  {\scriptsize 1111};
		\node[draw, fill=black!0, inner sep=2pt, rounded corners, minimum width=2pt] (w4) at (6,13.5) {\scriptsize 1110};
		\node[draw, fill=black!0, inner sep=2pt, rounded corners, minimum width=2pt] (w5) at (6,12) {\scriptsize 1101};
		\node[draw, fill=black!0, inner sep=2pt, rounded corners, minimum width=2pt] (w6) at (6,10.5) {\scriptsize 1100};
		\node[draw, fill=black!0, inner sep=2pt, rounded corners, minimum width=2pt] (v3) at (6,9)  {\scriptsize 1011};
		\node[draw, fill=black!0, inner sep=2pt, rounded corners, minimum width=2pt] (v4) at (6,7.5) {\scriptsize 1010};
		\node[draw, fill=black!0, inner sep=2pt, rounded corners, minimum width=2pt] (v5) at (6,6) {\scriptsize 1001};
		\node[draw, fill=black!0, inner sep=2pt, rounded corners, minimum width=2pt] (v6) at (6,4.5) {\scriptsize 1000};
		\node[draw, fill=black!0, inner sep=2pt, rounded corners, minimum width=2pt] (w3i) at (6,3)  {\scriptsize 0111};
		\node[draw, fill=black!0, inner sep=2pt, rounded corners, minimum width=2pt] (w4i) at (6,1.5) {\scriptsize 0110};
		\node[draw, fill=black!0, inner sep=2pt, rounded corners, minimum width=2pt] (w5i) at (6,0) {\scriptsize 0101};
		\node[draw, fill=black!0, inner sep=2pt, rounded corners, minimum width=2pt] (w6i) at (6,-1.5) {\scriptsize 0100};
		\node[draw, fill=black!0, inner sep=2pt, rounded corners, minimum width=2pt] (v3i) at (6,-3)  {\scriptsize 0011};
		\node[draw, fill=black!0, inner sep=2pt, rounded corners, minimum width=2pt] (v4i) at (6,-4.5) {\scriptsize 0010};
		\node[draw, fill=black!0, inner sep=2pt, rounded corners, minimum width=2pt] (v5i) at (6,-6) {\scriptsize 0001};
		\node[draw, fill=black!0, inner sep=2pt, rounded corners, minimum width=2pt] (v6i) at (6,-7.5) {\scriptsize 0000};

		\node[draw, fill=orange!60, inner sep=2pt, rounded corners, minimum width=2pt] (w1) at (-2,14.25) {\scriptsize 111};
		\node[draw, fill=blue!00, inner sep=2pt, rounded corners, minimum width=2pt] (w2) at (-2,11.25) {\scriptsize 110};
		\node[draw, fill=orange!0, inner sep=2pt, rounded corners, minimum width=2pt] (v1) at (-2,8.25) {\scriptsize 101};
		\node[draw, fill=orange!60, inner sep=2pt, rounded corners, minimum width=2pt] (v2) at (-2,5.25) {\scriptsize 100};
		\node[draw, fill=cyan!60, inner sep=2pt, rounded corners, minimum width=2pt] (w1i) at (-2,2.25) {\scriptsize 011};
		\node[draw, fill=cyan!60, inner sep=2pt, rounded corners, minimum width=2pt] (w2i) at (-2,-0.75) {\scriptsize 010};
		\node[draw, fill=cyan!60, inner sep=2pt, rounded corners, minimum width=2pt] (v1i) at (-2,-3.75) {\scriptsize 001};
		\node[draw, fill=cyan!60, inner sep=2pt, rounded corners, minimum width=2pt] (v2i) at (-2,-6.75) {\scriptsize 000};

		\node[draw, fill=green!60, inner sep=2pt, rounded corners, minimum width=2pt] (w) at (-8,12.75) {\scriptsize 11};
		\node[draw, fill=cyan!0, inner sep=2pt, rounded corners, minimum width=2pt] (v) at (-8,6.75) {\scriptsize 10};
		\node[draw, fill=green!60, inner sep=2pt, rounded corners, minimum width=2pt] (wi) at (-8,0.75) {\scriptsize 01};
		\node[draw, fill=cyan!0, inner sep=2pt, rounded corners, minimum width=2pt] (vi) at (-8,-5.25) {\scriptsize 00};

		\node[draw, fill=yellow!0, inner sep=2pt, rounded corners, minimum width=2pt] (r) at (-14,9.75) {\scriptsize 1};
		\node[draw, fill=yellow!0, inner sep=2pt, rounded corners, minimum width=2pt] (ri) at (-14,-1.75) {\scriptsize 0};

		\node[draw, fill=black!0, inner sep=2pt, rounded corners, minimum width=2pt] (I) at (-20,3) {\scriptsize r};

		\draw[->]   (I) --    (r) ;
		\draw[->]   (I) --   (ri) ;

		\draw[->]   (r) --   (w) ;
		\draw[->]   (r) --   (v) ;

		\draw[->]   (w) --  (w1) ;
		\draw[->]   (w) --  (w2) ;

		\draw[->]   (w1) --   (w3) ;
		\draw[->]   (w1) --   (w4) ;
		\draw[->]   (w2) --  (w5) ;
		\draw[->]   (w2) --  (w6) ;

		\draw[->]   (v) --  (v1) ;
		\draw[->]   (v) --  (v2) ;

		\draw[->]   (v1) --  (v3) ;
		\draw[->]   (v1) --  (v4) ;
		\draw[->]   (v2) --  (v5) ;
		\draw[->]   (v2) --  (v6) ;

		\draw[->]   (ri) --   (wi) ;
		\draw[->]   (ri) -- (vi) ;

		\draw[->]   (wi) --  (w1i) ;
		\draw[->]   (wi) --  (w2i) ;

		\draw[->]   (w1i) --  (w3i) ;
		\draw[->]   (w1i) -- (w4i) ;
		\draw[->]   (w2i) --  (w5i) ;
		\draw[->]   (w2i) --  (w6i) ;

		\draw[->]   (vi) --  (v1i) ;
		\draw[->]   (vi) --  (v2i) ;

		\draw[->]   (v1i) --  (v3i) ;
		\draw[->]   (v1i) -- (v4i) ;
		\draw[->]   (v2i) -- (v5i) ;
		\draw[->]   (v2i) --  (v6i) ;

		\node at (-17.5,-9) {$X_3$} ;
		\node at (-11.5,-9) {$X_2$} ;
		\node at (-5,-9) {$X_1$} ;
		\node at (2,-9) {$X_4$} ;

	\end{tikzpicture}
	\caption{}\label{fig:cstreeC}
    \end{subfigure}

\caption{Three staged trees representing models on four jointly distributed binary variables $X_1,X_2,X_3,X_4$. Two first two trees are model equivalent, while the third one is not.}
\label{fig: equivalent staged trees}
\end{figure}
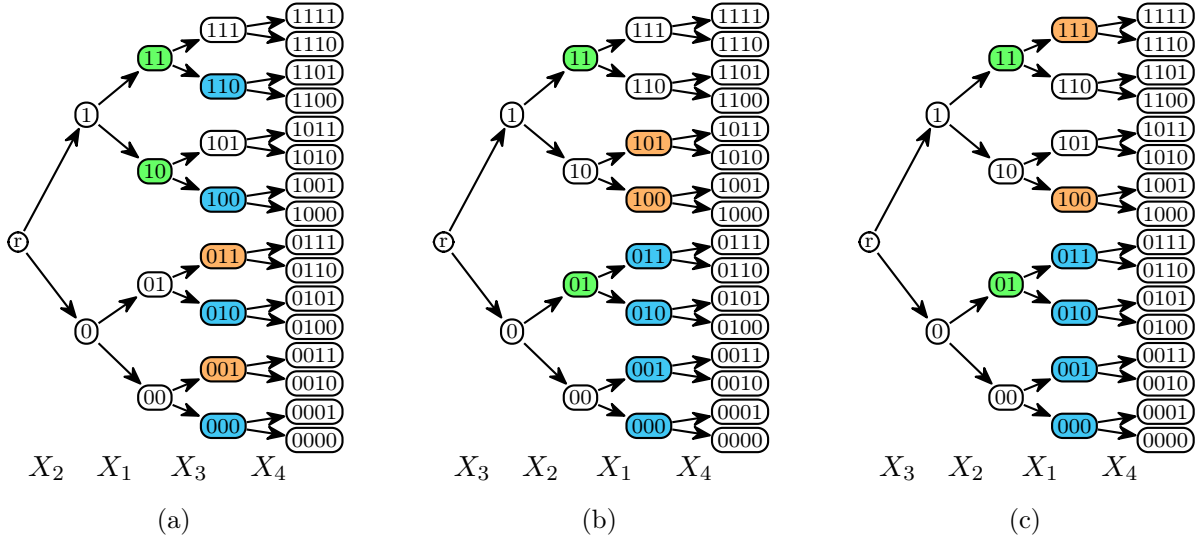

\begin{example}[Equivalence of staged trees] \label{ex: staged tree model equivalence}
Characterizing model equivalence of staged trees is a challenging open problem that many have considered in special cases \cite{gorgen2018discovery, gorgen2018equivalence, duarte2021representation}. The~description of the vanishing ideal up to saturation gives a simple method for determining whether or not two staged trees are model-equivalent. Indeed, consider two staged trees $(T,c)$ and $(T,c')$ supported on the same tree with parametrization $\alpha_T$, the linear spaces $\CC L_c$ and $\CC L_{c'}$ generated by the stages via \eqref{eq:Staged}, and respective ideals of model invariants $\SR J$ and~$\SR J'$. Then $\SR J : S \subseteq \SR J' : S$ if and only if every generator of $\SR J$ evaluates under $\alpha_T$ to an element of $\CC L_{c'}$. This is an easy computation which yields an exact model equivalence test without computing the vanishing ideals of the models. Instead, it uses the vanishing ideals of the parameter spaces which are linear and thus do not require the full complexity of Gröbner bases.

For example, this test decides in the blink of an eye that the staged tree models in \Cref{fig:cstreeA,fig:cstreeB} are model-equivalent, and that neither of these two trees is equivalent to \Cref{fig:cstreeC}. Our computations are based on the code provided with \cite{StagedToric}. The equivalence of the trees in~\Cref{fig:cstreeA,fig:cstreeB} was verified in \cite{duarte2021representation}. Our test naturally also provides certificates that distinguish inequivalent models. For example, the invariant $p_{1001} \, p_{1010} - p_{1000} \, p_{1011}$ of \Cref{fig:cstreeB} does not lie in the vanishing ideal of \Cref{fig:cstreeC}.
\end{example}

\begin{remark}
\label{rem: disc interventions}
In \cite{duarte2020algebraic}, Duarte and Solus extended the notion of interventional models to staged trees. One can show that interventional staged tree models (and hence discrete interventional DAG models) are also ambirational.
\end{remark}

\subsection{Lyapunov models}

Let $\Stab_n \subseteq \BB R^{n \times n}$ be the set of \emph{stable} matrices, i.e., matrices all of whose eigenvalues have negative real part. Then for every $M \in \Stab_n$ and $C \in \PD_n$ there is a unique solution $\Sigma \in \Sym_n$ to the \emph{Lyapunov equation}
\begin{equation}
  \label{eq:Lyap}
  M \Sigma + \Sigma M^\T = -C.
\end{equation}
This solution is positive definite; see \cite{IdentLyapunov} and its references.
Consider a linear space $\CC L \subseteq \Stab_n$ and fix $C \in \PD_n$. Collecting these solutions $\Sigma$ as $M \in \CC L$ varies defines a statistical model $\CC M(\CC L; C)$. The matrices $\Sigma$ are covariance matrices of stationary Ornstein--Uhlenbeck processes subject to the constraints $\CC L$ on the drift matrix~$M$. In case $\CC L$ is defined by vanishing constraints $m_{ij} = 0$ corresponding to non-edges in a directed graph, this yields the \emph{graphical continuous Lyapunov model} \cite{Lyapunov}. As stochastic processes in equilibrium, Lyapunov models incorporate an implicit temporal perspective which alleviates some of the difficulties in modeling feedback loops that arise with Bayesian networks. Notably, cyclic Lyapunov models are globally identifiable \cite{IdentLyapunov}, while SEMs based on a cyclic graph are not \cite[Proposition~16.2.4]{Sullivant}.

The parametrization map of Lyapunov models is constructed as follows. Since \eqref{eq:Lyap} is a linear matrix equation in~$\Sigma$, a standard vectorization trick in matrix analysis rewrites it as
\begin{equation}
  \label{eq:LyapSigma}
  B(M) \, \vec(\Sigma) = -\vec(C),
\end{equation}
where $B(M) = I_n \otimes M + M \otimes I_n$ is a sum of Kronecker products. If $M$ is stable, then so is $B(M)$ and in particular it is invertible. Hence, the entries of $\Sigma$ can be expressed using Cramer's rule as rational functions in $M$ and~$C$.
Parameter recovery can be attempted by reading \eqref{eq:Lyap} as a linear equation in~$M$. This leads to a similar vectorization
\begin{equation}
  \label{eq:LyapM}
  (\Sigma \otimes I_n + (I_n \otimes \Sigma) K_n) \, \vec(M) = -\vec(C),
\end{equation}
where $K_n$ is the \emph{commutation matrix} satisfying $\vec(M^\T) = K_n \, \vec(M)$. However, since $\Sigma$ and~$C$ are symmetric, this linear system has redundant rows, so this matrix will not be invertible. As explained in \cite{IdentLyapunov}, these redundancies can be removed. Their main result \cite[Theorem~32]{IdentLyapunov} shows that if $G$ is a simple directed graph (not necessarily acyclic), then its graphical continuous Lyapunov model is globally identifiable. The general proof, as written, does not directly translate to our framework. Nevertheless, it can be observed to apply in individual~cases.

\begin{example}[Lyapunov DAG model] \label{ex:Lyapunov}
Consider the complete DAG with all self-loops which imposes the following support constraints on the drift matrix $M$:

\begin{sidebyside}
\centering
\begin{sidebox}{0.3\linewidth}
\centering\noindent%
\begin{tikzpicture}[thick, scale=0.6]
  \tikzset{>={Stealth[length=3mm, round]}}
  \node[circle, draw, inner sep=1pt, minimum width=1pt] (1) at (0,0)  {$1$};
  \node[circle, draw, inner sep=1pt, minimum width=1pt] (2) at (0,-2) {$2$};
  \node[circle, draw, inner sep=1pt, minimum width=1pt] (3) at (2,-1) {$3$};

  \draw (1) edge [->, very thick, loop left] (1);
  \draw (2) edge [->, very thick, loop left] (2);
  \draw (3) edge [->, very thick, loop right] (3);

  \draw[->, very thick] (1) -- (2);
  \draw[->, very thick] (1) -- (3);
  \draw[->, very thick] (2) -- (3);
\end{tikzpicture}
\end{sidebox}%
\begin{sidebox}{0.6\linewidth}
\vspace{-0.5\baselineskip}
\[
  \CC L = \Set{\begin{pmatrix}
  m_{11} & 0 & 0 \\
  m_{21} & m_{22} & 0 \\
  m_{31} & m_{32} & m_{33}
  \end{pmatrix} : m_{ji} \in \BB R}
\]
\end{sidebox}
\end{sidebyside}

\noindent%
Note that we adhere to the convention of \cite{IdentLyapunov} that $m_{ji}$ represents the edge $i \rightarrow j$, but unlike \cite[Example~22]{IdentLyapunov}, which also discusses this example, we orient the edges from low to high numbers.
For simplicity, we will fix $C = I_3$. The parametrization map is then given by Cramer's rule applied to the linear system~\eqref{eq:LyapSigma}:

\begingroup
\vspace{-\baselineskip}
\scriptsize
\arraycolsep=2pt
\begin{gather*}
  \left(\begin{array}{ccc|ccc|ccc}
  2 m_{11} & 0 & 0 & 0 & 0 & 0 & 0 & 0 & 0 \\
  m_{12} & m_{11}+m_{22} & 0 & 0 & 0 & 0 & 0 & 0 & 0 \\
  m_{13} & m_{23} & m_{11}+m_{33} & 0 & 0 & 0 & 0 & 0 & 0 \\ \hline
  m_{12} & 0 & 0 & m_{11}+m_{22} & 0 & 0 & 0 & 0 & 0 \\
  0 & m_{12} & 0 & m_{12} & 2 m_{22} & 0 & 0 & 0 & 0 \\
  0 & 0 & m_{12} & m_{13} & m_{23} & m_{22}+m_{33} & 0 & 0 & 0 \\ \hline
  m_{13} & 0 & 0 & m_{23} & 0 & 0 & m_{11}+m_{33} & 0 & 0 \\
  0 & m_{13} & 0 & 0 & m_{23} & 0 & m_{12} & m_{22}+m_{33} & 0 \\
  0 & 0 & m_{13} & 0 & 0 & m_{23} & m_{13} & m_{23} & 2 m_{33}
  \end{array}\right)
  \left(\begin{array}{c}
  \sigma_{11} \\ \sigma_{12} \\ \sigma_{13} \\ \hline \sigma_{12} \\ \sigma_{22} \\ \sigma_{23} \\ \hline \sigma_{13} \\ \sigma_{23} \\ \sigma_{33}
  \end{array}\right) =
  \left(\begin{array}{c}
  -1 \\ 0 \\ 0 \\ \hline 0 \\ -1 \\ 0 \\ \hline 0 \\ 0 \\ -1
  \end{array}\right).
\end{gather*}
\endgroup

\noindent%
Let $B = B(M)$ denote the \emph{negative} of the matrix on the left-hand side. The negation is necessary to give $B$ a positive determinant whenever $M$ is stable, as we show below. Its rows and columns are indexed by pairs $ij \in [3] \times [3]$. Using $B^{kl,ij}$ for the submatrix of $B$ with row $kl$ and column $ij$ deleted, the numerators of the rational functions defining $\sigma_{ij}$ are easy to write out as linear combinations of maximal minors of~$B$:
\begin{equation*}
  \sigma_{ij} = (-1)^{i+j+1} \frac{|B^{11,ij}| + |B^{22,ij}| + |B^{33,ij}|}{|B|}.
\end{equation*}
After cancelling common terms in the numerators and denominators, the $\sigma_{ij}$ are rational functions in the entries of~$M$ whose numerator degrees range between~$0$ and~$5$, having between~$1$ and~$21$~terms.
Following \cite[Example~22]{IdentLyapunov}, rational functions to identify the entries of $M$ can be obtained by considering this full-rank subsystem of~\eqref{eq:LyapM}:

\begingroup
\vspace{-\baselineskip}
\scriptsize
\arraycolsep=2pt
\begin{gather*}
  \left(\begin{array}{ccc|cc|c}
  2 \sigma_{11} & 0 & 0 & 0 & 0 & 0 \\
  \sigma_{12} & \sigma_{11} & 0 & \sigma_{12} & 0 & 0 \\
  \sigma_{13} & 0 & \sigma_{11} & 0 & \sigma_{12} & \sigma_{13} \\ \hline
  0 & 2 \sigma_{12} & 0 & 2 \sigma_{22} & 0 & 0 \\
  0 & \sigma_{13} & \sigma_{12} & \sigma_{23} & \sigma_{22} & \sigma_{23} \\ \hline
  0 & 0 & 2 \sigma_{13} & 0 & 2 \sigma_{23} & 2 \sigma_{33}
  \end{array}\right)
  \left(\begin{array}{c}
  m_{11} \\ m_{21} \\ m_{31} \\ \hline m_{22} \\ m_{32} \\ \hline m_{33}
  \end{array}\right) =
  \left(\begin{array}{c}
  -1 \\ 0 \\ 0 \\ \hline -1 \\ 0 \\ \hline -1
  \end{array}\right).
\end{gather*}
\endgroup

\noindent
Let the matrix be $A = A(\Sigma)$ and label its rows by $kl$, for $k \le l$, and its columns by $ji$, for ${i \rightarrow j} \in E$, in the lexicographic ordering. This matrix is invertible and Cramer's rule with Laplace expansion gives formulas for the $m_{ji}$ similar to those for the~$\sigma_{ij}$.

This establishes a birational isomorphism, but it is far from full. Unlike \Cref{ex:Undirected}, where Cramer's rule is used on a generic matrix and its inverse, $|A|$ and $|B|$ are not mapped to each other under the isomorphism and they are reducible polynomials because of the special structure of the Kronecker products:
\begin{align*}
  |A| &= 8 \, |\Sigma_{1}| |\Sigma_{12}| |\Sigma_{123}|, \\
  |B| &= -8 \, m_{11} m_{22} m_{33} (m_{11} + m_{22})^2 (m_{11} + m_{33})^2 (m_{22} + m_{33})^2.
\end{align*}
We apply \Cref{lemma:ExtendFinite} to make the isomorphism full. To this end, consider the irreducible factors of $|A|$ and $|B|$ given above. Since $M \in \CC L$ is triangular, it is stable if and only if its diagonal entries are negative. Thus $\<S = \monoid{-m_{11}, -m_{22}, -m_{33}, -m_{11}-m_{22}, -m_{11}-m_{33}, -m_{22}-m_{33}}$ is a monoid containing $|B|$ (up to the positive unit~$8$) whose generators are positive on $\CC L \cap \Stab_3$. Similarly $S = \monoid{|\Sigma_1|, |\Sigma_{12}|, |\Sigma_{123}|}$ is generated by positive irreducible polynomials on $\PD_3$ and contains $|A|$ (up to a positive unit). Application of \Cref{lemma:ExtendFinite} introduces new polynomials to the localizing monoids $\<T$ and $T$ which will be assumed to be positive in applications of \Cref{thm:Biri}. To ensure that these new generators do not further limit the parameter space, we show that they are always positive on $\CC L \cap \Stab_3$. In fact, the numerators are positive sums of squares and the denominators are in $\<S$:

\begingroup
\vspace{-\baselineskip}
\scriptsize
\begin{gather*}
  |\Sigma_{1}|   \mapsto \frac{1}{-2\, m_{11}}, \qquad
  |\Sigma_{12}|  \mapsto \frac{m_{12}^2 + (m_{11} + m_{22})^2}{4\, m_{11} m_{22} (m_{11}+m_{22})^2}, \\
  |\Sigma_{123}| \mapsto \frac1{-8\, m_{11} m_{22} m_{33} (m_{11}+m_{22})^2 (m_{11}+m_{33})^2 (m_{22}+m_{33})^2} \cdot {} \\
  \Big((m_{11}^2 m_{22} + m_{11} m_{22}^2 + m_{11}^2 m_{33} + 2 m_{11} m_{22} m_{33} + m_{22}^2 m_{33} + m_{11} m_{33}^2 + m_{22} m_{33}^2)^2 + {} \\
  (m_{22} m_{23} m_{33} - m_{12} m_{13} m_{22} + m_{11}^2 m_{23} + m_{12}^2 m_{23} + m_{11} m_{22} m_{23} + m_{12} m_{13} m_{33} + m_{11} m_{23} m_{33})^2 + {} \\
  (m_{11} m_{13} m_{22} + m_{13} m_{22}^2 - m_{12} m_{22} m_{23} + m_{11} m_{13} m_{33} + m_{13} m_{22} m_{33} - m_{12} m_{23} m_{33})^2 + {} \\
  (m_{11} m_{12} m_{22} + m_{11} m_{12} m_{33} + m_{12} m_{22} m_{33} + m_{12} m_{33}^2)^2\Big).
\end{gather*}
\endgroup

\noindent%
These sum of squares representations were found by the \TT{PolynomialSumOfSquaresList} function of \TT{Mathematica} \citesoft{Mathematica}. This is also true in the other direction: the images of $m_{11}$, $m_{22}$ and $m_{33}$ under the parameter recovery map are negative sums of squares with denominators from~$S$. This completes the construction of an affine birational isomorphism from the complete DAG which bijectively maps $\CC L \cap \Stab_3$ to the Lyapunov model~$\CC M(\CC L; I_3) = \PD_3$.

\begin{figure}
\begin{subfigure}[t]{0.35\linewidth}
\centering
\includegraphics[width=0.95\linewidth]{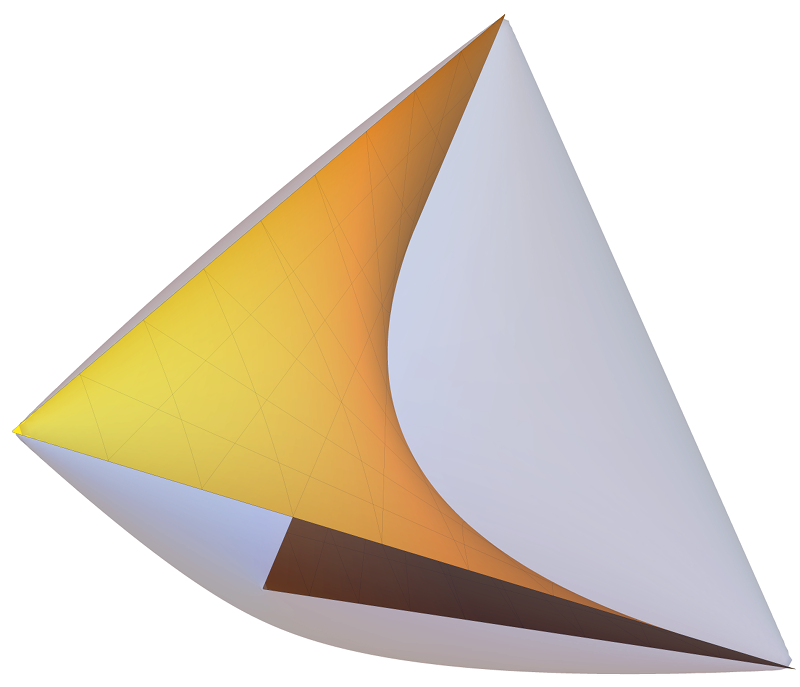}
\caption{Setting $m_{31} = 0$.}
\label{fig:Lyapunov:a}
\end{subfigure}
\hfill
\begin{subfigure}[t]{0.3\linewidth}
\centering
\includegraphics[width=0.9\linewidth]{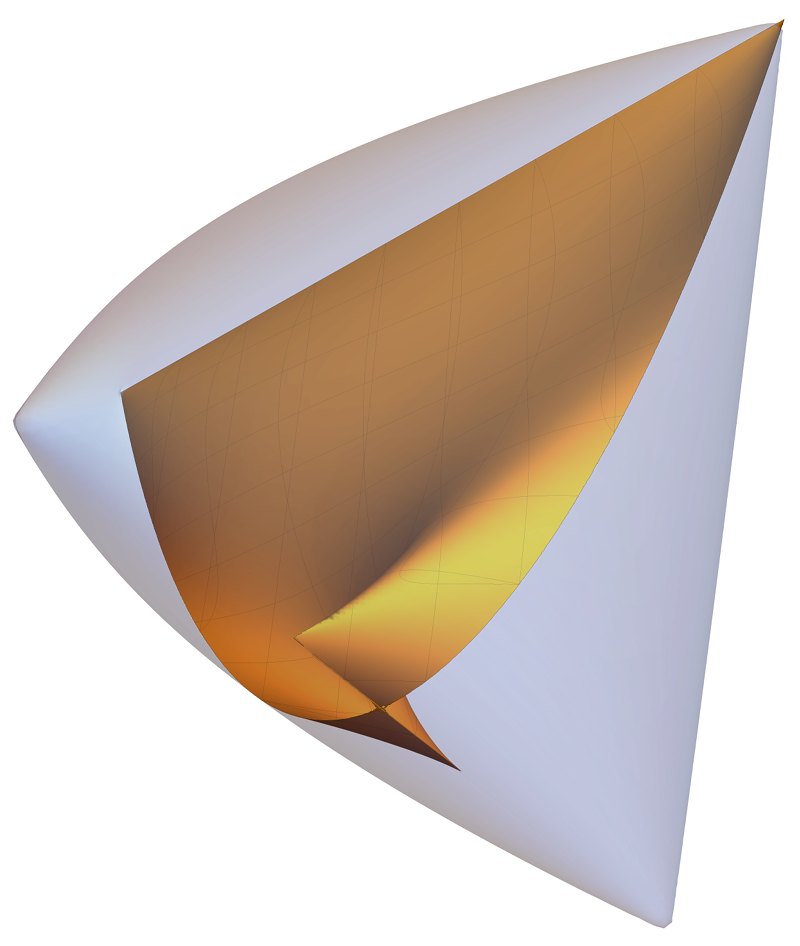}
\caption{Setting $m_{31} = m_{32}$.}
\label{fig:Lyapunov:c}
\end{subfigure}
\hfill\hspace{1em}\begin{subfigure}[t]{0.3\linewidth}
\centering
\includegraphics[width=0.95\linewidth]{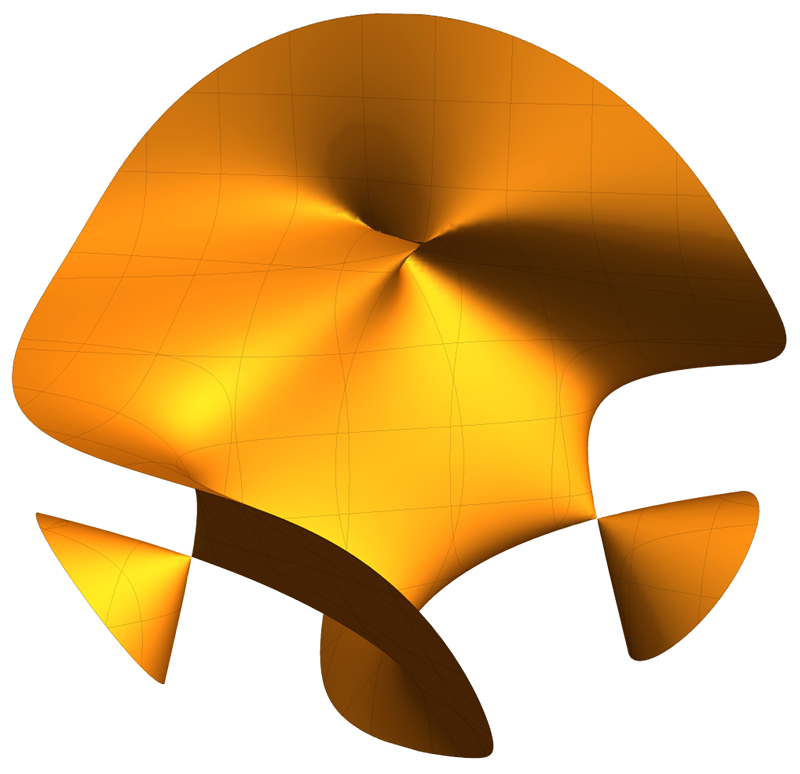}
\caption{Hypersurface of $m_{31} = m_{32}$.}
\label{fig:Lyapunov:d}
\end{subfigure}
\caption{Submodels of the complete DAG Lyapunov model on $3$ vertices.}
\label{fig:Lyapunov}
\end{figure}

\Cref{fig:Lyapunov} shows certain submodels which can be studied using this isomorphism. Note that the submodel in \Cref{fig:Lyapunov:a} with $m_{31}=0$ is really cut out of $\PD_3$ by the irreducible quintic form
\begin{gather*}
  \sigma_{11} \sigma_{12}^2 \sigma_{13} \sigma_{22} - \sigma_{11}^2 \sigma_{13} \sigma_{22}^2 - \sigma_{11} \sigma_{12}^3 \sigma_{23} + \sigma_{11} \sigma_{12} \sigma_{13}^2 \sigma_{23} + \sigma_{11}^2 \sigma_{12} \sigma_{22} \sigma_{23} + {} \\
  \sigma_{12} \sigma_{13}^2 \sigma_{22} \sigma_{23} - \sigma_{11}^2 \sigma_{13} \sigma_{23}^2 - 2 \sigma_{12}^2 \sigma_{13} \sigma_{23}^2 + \sigma_{11} \sigma_{13} \sigma_{22} \sigma_{23}^2 - \sigma_{11} \sigma_{12}^2 \sigma_{13} \sigma_{33} - {} \\
  \sigma_{11} \sigma_{13} \sigma_{22}^2 \sigma_{33} + \sigma_{11}^2 \sigma_{12} \sigma_{23} \sigma_{33} + \sigma_{12}^3 \sigma_{23} \sigma_{33} = 0.
\end{gather*}
In the elliptope (when $\sigma_{11} = \sigma_{22} = \sigma_{33} = 1$), the quintic factors as $(\sigma_{13} - \sigma_{12} \sigma_{23}) \cdot (1 - \sigma_{12} \sigma_{13} \sigma_{23})$ which explains why the figure coincides with the conditional independence model $\CI{1,3|2}$ and thus with the Bayesian network model of the complete DAG with $\lambda_{13} = 0$. But~this is not true in the 6-dimensional cone~$\PD_3$ as $\CI{1,3|2}$ does not hold on the entire submodel defined by~$m_{31}=0$. By~\cite{LyapunovCI} the conditional independences satisfied by a Lyapunov DAG model are determined by its marginal independences and do not in general characterize the model geometrically (in~contrast to \Cref{ex:Undirected} and \Cref{ex:Bayesian}). Nevertheless, \Cref{thm:Biri} provides a useful Markov property for this model in the form of checkable constraints. It is still a mystery how these constraints may be read off more directly from the graph in a manner similar to the Markov properties of Bayesian networks. The high degree of non-linearity of Lyapunov models becomes more apparent in the ``colored'' model with $m_{31}=m_{32}$ depicted in~\Cref{fig:Lyapunov:c,fig:Lyapunov:d}.
\end{example}

\begin{remark}
It would be interesting to study the Lyapunov 3-cycle in detail. In this case (and, in fact, also in \Cref{ex:Lyapunov}), $|B|$ factors into principal minors of the \emph{Hurwitz matrix} $H$ of $M$, specifically $|B| = 8\,|H_{12}||H_{123}|$. Recall that $M$ is stable if and only if the leading principal minors of $H$ are all positive. Further analysis is greatly complicated by the appearance of large irreducible factors in the images of the leading principal minors of both $H$ and $\Sigma$ under the birational maps. It is not clear whether these polynomials are positive on $\PD_3$ and $\CC L \cap \Stab_3$, respectively, or if they impose additional constraints when applying \Cref{thm:Biri}.
The irreducible factors of $|A|$ and $|B|$ and their positivity are a subject of ongoing research; see also~\cite[Example~6]{IdentLyapunov}.
\end{remark}

\section{Applications and discussion}
\label{sec: stats}

The examples given in the previous section illustrate potential applications of birational implicitization. We distill them here into greater generality.

\subsection{Markov properties}
\label{subsec: markov properties}

In graphical modeling, the term \emph{Markov property} refers to an implicit description of a statistical model, usually via conditional independence relations which can be read off from the graph. More broadly, a Markov property for a statistical model is simply an (ideally finite) collection of model-defining constraints. \Cref{thm:Biri} provides a Markov property for any ambirational statistical model.
\begin{definition} \label{def:AlgMarkovProp}
The \emph{ambirational Markov property} of an ambirational statistical model $\CC M = \alpha( \<{\CC M})$ is the set of polynomial equations, inequalities and inequations defining $\CC M$ in \Cref{thm:Biri}.
\end{definition}
These model-defining equations and inequalities may be tested using (possibly incomplete) U-statistics as explained in \cite{sturma2024testing}. In addition to model membership tests, this provides natural tools for constraint-based structure learning methods in the context of graphical modeling.

\Cref{ex:Undirected,ex:Bayesian} show that the ambirational Markov properties recover the well-known pairwise Markov property and pairwise local Markov property for Gaussian undirected graphical models and Bayesian networks, respectively. Similarly, the ambirational Markov property for staged trees described in \Cref{subsec:StagedTree} recovers the well-known ordered Markov property when restricted to the case of (discrete) Bayesian networks.
In \Cref{ex:ColoredDAG,ex:ErrorIntervention}, birational implicitization yields semialgebraic model descriptions which do not necessarily revolve around conditional independence but nevertheless have a known statistical interpretation. %

Most notably, at one of the newest frontiers of graphical modeling research, the ambirational Markov property yields model-defining constraints for Lyapunov models (\Cref{ex:Lyapunov}) for which neither the statistical meaning nor a combinatorial interpretation is available. We hope that the ability to write down these polynomial relations stimulates research on how to explain~them.

\subsection{Model equivalence}
\label{subsec: model equivalence}

In statistical and machine learning, it is commonplace to encode model constraints (such as dependencies) via simple combinatorial structures such as graphs. \emph{Representation learning} is the problem of recovering from data a good (if not the best) combinatorial structure representing the model constraints apparent in the data.
In causality and graphical models, representation learning specializes to the problem of \emph{structure learning}: to choose from a finite collection of graphs the one whose model fits a given sample best. Since the goodness of fit depends on the model, the best graph can only be recovered up to model equivalence. Thus it becomes an important subtask to \emph{distinguish} graphical models from each other and to classify graphs according to model equivalence. These problems are well-studied for classical types of models, such as Bayesian networks, and our methods recover the standard Markov properties which provide a complete solution to the model equivalence problem. But \Cref{thm:Biri,thm:VanishingIdeal} transparently accommodate more general statistical assumptions on the parameters, such as non-linear equations, inequalities or simply different types of linear equations (as in \Cref{ex:RCON,ex:ColoredDAG,ex:MonotoneIntervention}).

Suppose that $\CC M_1 = \alpha_1(\<{\CC M}_1)$ and $\CC M_2 = \alpha_2(\<{\CC M}_2)$ are two ambirational models in the same ambient space~$\BB R^n$. From the semialgebraic descriptions of their parameter spaces, \Cref{thm:Biri} derives finite Markov properties. In concrete cases, it may be possible to certify that the constraints of one model are satisfied on the other and vice versa to establish model equivalence. They can also be used to certify inequivalence by exhibiting a point in one model which violates the other's Markov property.
Since the models are ambirational, the equivalence tests can be performed in the common model space or in the respective parameter spaces. Indeed, a constraint $F_1 \mathrel{\bowtie} 0$ from \Cref{thm:Biri} with ${\bowtie} \in \Set{ {=}, {\ge}, {\not=} }$, which is known to be valid for $\CC M_1$, is valid for $\CC M_2$ if and only if $\alpha_2^*(F_1) \mathrel{\bowtie} 0$ is valid on $\<{\CC M}_2$. In algebraic statistics, parameter spaces are generally much simpler than their models and the birational isomorphism makes it possible to exploit this asymmetry in complexity. We have the following test for model equivalence of ambirational statistical models as an immediate corollary to~\Cref{thm:Biri}.

\begin{corollary} \label{cor:ModelEquivalence}
Let $\CC M_1 = \alpha_1(\<{\CC M}_1)$ and $\CC M_2 = \alpha_2(\<{\CC M}_2)$ be two ambirational models in the same ambient space~$\BB R^n$.
Then $\CC M_1 = \CC M_2$ if and only if  $\alpha_j^*(F_i) \mathrel{\bowtie} 0$ is valid on $\<{\CC M}_j$ for all constraints $F_i \mathrel{\bowtie} 0$ defining $\CC M_i$ as in \Cref{thm:Biri}, for $i\neq j$ with $i, j\in \{1,2\}$.
\end{corollary}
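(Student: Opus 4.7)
The plan is to obtain this corollary as an immediate consequence of \Cref{thm:Biri} combined with the pullback--pushforward duality for rational maps. First, by \Cref{thm:Biri} applied to each $\CC M_i$, there is a finite list of polynomial equalities, inequalities, and inequations $\{F_i \mathrel{\bowtie} 0\}$ whose common solution set in $\BB R^n$ equals $\CC M_i$. Thus $\CC M_1 = \CC M_2$ is equivalent to the conjunction of the two inclusions $\CC M_1 \subseteq \CC M_2$ and $\CC M_2 \subseteq \CC M_1$, and each such inclusion is in turn equivalent to requiring that every constraint defining the target model be satisfied at every point of the source model.

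The second step is to translate these point-wise conditions through the parametrizations. Since $\CC M_j = \alpha_j(\<{\CC M}_j)$, the condition ``$F_i \mathrel{\bowtie} 0$ holds on all of $\CC M_j$'' is, by direct substitution, equivalent to ``$F_i(\alpha_j(\<y)) \mathrel{\bowtie} 0$ for all $\<y \in \<{\CC M}_j$'', which is precisely the condition that the pullback $\alpha_j^*(F_i)$ satisfies $\mathrel{\bowtie}\, 0$ on $\<{\CC M}_j$. Ambirationality guarantees that the denominators of $\alpha_j$ lie in the monoid $\<S_j$ whose generators are strictly positive on $\<{\CC M}_j$, so the pullback is a well-defined rational function whose sign and vanishing behavior on $\<{\CC M}_j$ are unambiguous. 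Applying this equivalence with $(i,j) = (1,2)$ yields $\CC M_2 \subseteq \CC M_1$, and with $(i,j) = (2,1)$ yields $\CC M_1 \subseteq \CC M_2$, giving the claimed biconditional.

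There is really no obstacle; the statement amounts to bookkeeping about how \Cref{thm:Biri} transports defining constraints across an affine birational isomorphism. The only point warranting a mention is the interpretation of a rational function $\alpha_j^*(F_i)$ as satisfying $\mathrel{\bowtie}\, 0$ on $\<{\CC M}_j$: for $\bowtie \in \{=, \neq\}$ this is a condition on the numerator alone, while for $\bowtie\, = \,\ge$ one uses that the denominator is (up to a square) a product of elements of $\<S_j$, which are positive on $\<{\CC M}_j$ by hypothesis. Since this positivity is built into the definition of ambirationality, no further justification is needed.
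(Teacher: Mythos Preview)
Your proof is correct and mirrors exactly the paper's reasoning: the paper presents this corollary as immediate from \Cref{thm:Biri} together with the observation (made in the paragraph just before the corollary) that a constraint $F_i \mathrel{\bowtie} 0$ holds on $\CC M_j = \alpha_j(\<{\CC M}_j)$ if and only if $\alpha_j^*(F_i) \mathrel{\bowtie} 0$ holds on~$\<{\CC M}_j$. One minor imprecision: strict positivity of the denominators is not literally part of the definition of ambirationality (which only demands non-vanishing of $\<S$ on $\<{\CC M}$), but it is assumed once \Cref{thm:Biri} is invoked, so this does not affect the argument.
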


While \Cref{cor:ModelEquivalence} is a computational test for two specific models, the polynomial equations obtained via \Cref{thm:Biri} may be used to give proofs of model inequivalence (and hence, structural identifiability results). Specifically, one can give such proofs by arguing that a certain constraint on the first model, obtained via \Cref{thm:Biri}, cannot be in the vanishing ideal of the second model. This is the essence of the structural identifiability proofs introduced in \cite{ColoredDAG} for families of colored Gaussian DAG models. For general colored DAGs, such proof mechanisms could possibly be applied to yield a graphical characterization of model equivalence.

\subsection{Vanishing ideals}
\label{sec:VanishingIdeal}

From an algorithmic point of view, testing whether $F \mathrel{\bowtie} 0$ on a semialgebraic set~$\CC M$ is quite difficult in general. It is the model problem for the complexity class $\forall\BB R$ which embodies the \emph{universal theory of the reals}; cf.~\cite{BeyondETR,CompendiumETR}. Numerical methods from polynomial optimization are predominantly used in practice~\cite{SOS}. Fortunately, statistical models often have an additional feature: the inequalities are not decisive for model equivalence --- the vanishing ideal is. \Cref{thm:VanishingIdeal} provides a general tool for finding this vanishing ideal via saturation as opposed to elimination.

The classical approach in algebraic statistics is to decompose a statistical model $\CC M = \CC V \cap \CC K$ into a (complex) algebraic variety $\CC V$ and a full-dimensional semialgebraic set~$\CC K$. The inequalities defining $\CC K$ represent general statistical assumptions about the \emph{type} of distribution and are the same for all models of the same type. For Gaussian distributions, $\CC K$ is the cone of positive definite (covariance or concentration) matrices; for discrete distributions, it is the non-negative orthant (and $\CC V$ always contains the constraint that all atomic probabilities sum to~$1$). In these important cases $\CC K$ is even convex.
The main point is that for important types of statistical models $\CC M$ is Zariski-dense in $\CC V$. Thus, whether two such models $\CC M_1 = \CC V_1 \cap \CC K$ and $\CC M_2 = \CC V_2 \cap \CC K$ are equal can be decided by comparing their vanishing ideals, ignoring the inequalities encoded in~$\CC K$. A~convenient sufficient condition for Zariski density of $\CC M$ in $\CC V$ is that $\CC V$ is irreducible and of the same dimension as~$\CC M$, so that there are no extraneous components outside of $\CC K$. All models discussed in \Cref{sec:Ambirational}, which do not explicitly introduce extra inequalities, meet this condition, and this is easy to check because their parameter spaces are defined via linear equations and inequalities with rational coefficients. This explains the success of the decomposition~approach.

\begin{figure}[t]
\begin{minted}{macaulay2}
needsPackage "GraphicalModels";

V = {0,1,2,3,4,5};
G = digraph(V,{{0,2},{0,3},{1,2},{1,3},{2,3},{3,4},{0,5},{1,5},{2,5},{3,5},{4,5}});
R = gaussianRing G;
S = covarianceMatrix R;
allE = set(flatten for i in 0..#V-1 list for j in i+1..#V-1 list (V#i,V#j));

-- Vanishing ideal via built-in elimination method: not finished after 20 minutes
time I1 = gaussianVanishingIdeal R;

-- Vanishing ideal via saturation: 0.186855 seconds
time (
  prs = for i in V list (
    P := toList parents(G, i);
    if #P == 0 then 1 else det submatrix(S, P, P)
  );
  J = ideal for ij in toList(allE-set(edges G)) list (
    P := toList parents(G, ij#1);
    det submatrix(S, {ij#0}|P, {ij#1}|P)
  );
  I2 = fold(saturate, J, prs);
);
\end{minted}
\caption{Elimination vs.\ saturation for computing the vanishing ideal of a Gaussian DAG model.}
\label{fig:Code}
\end{figure}

The choice of the algebraic variety $\CC V$ may vary depending on the implicit model representation of interest. For instance, in graphical models, $\CC V$ may be the zero locus of the conditional independence ideal associated to a Markov property which differs from the ambirational one, as considered in \cite{geiger2006toric, SullivantGaussianBN}. In these situations, the conditional independence ideal of interest, say $\SR C$, typically contains the ideal $\SR J$ of model-defining equations recovered via \Cref{thm:VanishingIdeal}. Hence~$\SR J \subseteq \SR C \subseteq \SR I \subseteq \SR J:S$, where $\SR I$ is the vanishing ideal. Saturation then establishes $\SR J:S = {\SR C:S} = \SR I$ which proves that $\SR C$ saturates to the vanishing ideal as well.

Among other results of this nature, this yields a swift proof of Sullivant's conjecture \cite[Conjecture~3.3]{SullivantGaussianBN} that saturating the global conditional independence ideal of a Gaussian DAG model at the principal minors results in the vanishing ideal. This insight substantially improves the performance of vanishing ideal computations in practice. The \Macaulay2 \citesoft{M2} code in \Cref{fig:Code} uses the built-in \TT{GraphicalModels} \citesoft{GraphicalModels} package to compute the vanishing ideal of a 6-vertex graph using elimination but does not terminate after 20 minutes. The alternative procedure based on \Cref{thm:VanishingIdeal} finishes in 0.2 seconds.

The assumption that two ambirational models are distinguishable by their Zariski closures also improves the model equivalence test from \Cref{cor:ModelEquivalence}: only equations from the respective ambirational Markov property need to be transferred via the pullback and checked on the other model's parameter space. As seen throughout \Cref{sec:Ambirational}, the vanishing ideals of parameter spaces are often linear in statistics. Hence, as a generalization of the procedure used in \Cref{ex: staged tree model equivalence}, we obtain an exact model equivalence test which uses the vanishing ideal without computing it. If the parameter spaces are sufficiently simple (e.g., linear), we can circumvent the use (or at least the worst-case complexity) of Gröbner bases.

In some cases, notably Lyapunov models (\Cref{ex:Lyapunov}), the Markov property and parametrization map consist of such long and high-degree polynomials that even plugging a polynomial $F_1$ into the pullback $\alpha_2^*$ can take considerable amounts of time. In these cases, one could try plugging random values into $\alpha_2$ to generate points on the model~$\CC M_2$ which are then evaluated under~$F_1$. The Schwartz--Zippel lemma can be used to bound the probability that $\alpha_2^*(F_1)$ is non-zero on the parameter space~$\<{\CC M}_2$~--- a technique we learned from \cite{AlgebraicPhylogenetics}.

\vspace{1em}

\section*{Acknowledgements}

\setlength{\intextsep}{5pt}%
\setlength{\columnsep}{15pt}%
\begin{wrapfigure}{R}{0.2\linewidth}
\vspace{-.2\baselineskip}%
\centering%
\href{https://doi.org/10.3030/101110545}{%
\includegraphics[width=0.9\linewidth]{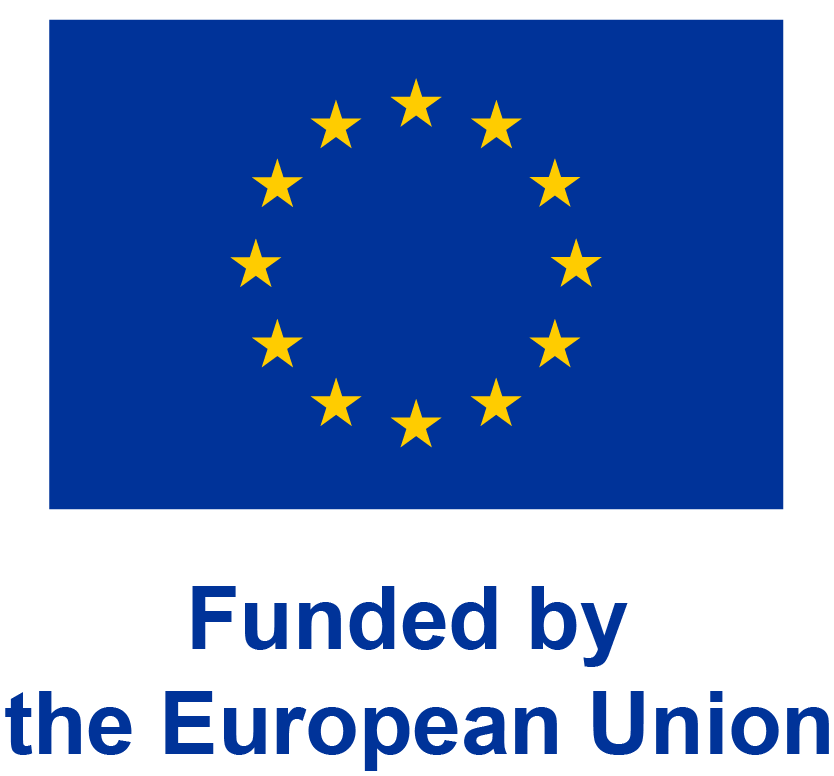}%
}
\end{wrapfigure}
We thank Benjamin Hollering for stimulating discussions. T.B.~and L.S~were partially supported by the Wallenberg Autonomous Systems and Software Program (WASP) funded by the Knut and Alice Wallenberg Foundation. T.B.~has received funding from the European Union's Horizon 2020
research and innovation programme under the Marie Skłodowska-Curie grant
agreement No.~101110545.
L.S.~was further supported by a Starting Grant from the Swedish Research Council (Vetenskapsr\aa{}det), the G\"oran Gustafsson Foundation's Prize for Young Researchers, and a research pairs grant from the center for Digital Futures at KTH.
All plots were generated with Wolfram~\TT{Mathematica}~\citesoft{Mathematica}.

\bibliographystyle{tboege}
\bibliography{biri}

\let\etalchar\undefined
\nocitesoft{Mathematica, M2, GraphicalModels}
\bibliographystylesoft{tboege}
\bibliographysoft{biri}

\end{document}